\numberwithin{equation}{section}
\newtheorem{theorem}{Theorem}[section]
\newtheorem{lemma}[theorem]{Lemma}
\newtheorem{proposition}[theorem]{Proposition}
\theoremstyle{definition}
\newtheorem{example}[theorem]{Example}
\newtheorem{remark}[theorem]{Remark}
\newtheorem*{assumption*}{Assumption}
\def\E{{\mathbb E}}
\def\R{{\mathbb R}}
\def\N{{\mathbb N}}
\def\FF{{\mathbb F}}
\def\GG{{\mathbb G}}
\def\PP{{\mathbb P}}
\def\AA{{\mathbb A}}
\def\P{{\mathcal P}}
\def\F{{\mathcal F}}
\def\T{{\mathbb T}}
\def\B{{\mathcal B}}
\def\H{{\mathcal H}}
\def\X{{\mathcal X}}
\def\Y{{\mathcal Y}}
\def\Z{{\mathcal Z}}
\def\L{{\mathcal L}}
\def\G{{\mathcal G}}
\def\W{{\mathcal W}}
\def\A{{\mathcal A}}
\def\U{{\mathcal U}}
\title[Denseness of adapted processes]{Denseness of adapted processes among causal couplings}
\author{Mathias Beiglb\"{o}ck}
\author{Daniel Lacker}
\begin{document}

\begin{abstract}
It is well known that any pair of random variables $(X,Y)$ with values in Polish spaces, provided that $Y$ is nonatomic, can be approximated in joint law by random variables of the form $(X',Y)$ where $X'$ is $Y$-measurable and $X' \stackrel{d}{=} X$. This article surveys and extends some recent dynamic analogues of this result. For example, if $X$ and $Y$ are stochastic processes in discrete or continuous time, then, under a nonatomic assumption as well as a necessary and sufficient causality (or compatibility) condition, one can approximate $(X,Y)$ in law in path space by processes of the form $(X',Y)$, where $X'$ is adapted to the filtration generated by $Y$. In addition, in finite discrete time, we can take $X'$ to have the same law as $X$. A similar approximation is valid for randomized stopping times, without the first marginal fixed. Natural applications include relaxations of (mean field) stochastic control and causal optimal transport problems as well as new characterizations of the immersion property for progressively enlarged filtrations.
\end{abstract}

\maketitle

\section{Introduction}

If $X$ and $Y$ are random variables with values in Polish spaces, and if $Y$ is nonatomic, then there is a sequence $(X_n)$ of $Y$-measurable random variables such that $X_n\stackrel{d}{=} X$ and $(X_n,Y) \Rightarrow (X,Y)$, where $\stackrel{d}{=}$ denotes equality in law and $\Rightarrow$ denotes convergence in law. In other words, the set of couplings concentrated on the graph of a function (Monge couplings) is dense in the set of all couplings (Kantorovich couplings).
This well known and fundamental fact is reviewed in Section \ref{se:non-dynamic}, with a brief proof.
The goal of this paper is to survey and extend some recent \emph{dynamic} or \emph{non-anticipative} analogues of this result. 
Let us present immediately two such theorems, in discrete and continuous time:\footnote{This paper is an expansion of the the previous largely expository paper \cite{lacker2018dense}. The prior version \cite{lacker2018dense} contained only the weaker form of Theorem \ref{th:intro:density-discrete} in which condition (iii) is omitted, i.e., the $X$-marginal is not allowed to be fixed. In addition, the current version contains an expanded discussion of the continuous-time case and counterexamples in Section \ref{se:2marginals-dynamic}.}

\begin{theorem}[Discrete time] \label{th:intro:density-discrete}
Consider two stochastic processes $Y = (Y_1,\ldots,Y_N)$ and $X=(X_1,\ldots,X_N)$ with values in Polish spaces $\Y$ and $\X$, respectively. Let $\FF^Y=(\F^Y_n)_{n=1}^N$ and $\FF^X=(\F^X_n)_{n=1}^N$ denote the filtrations generated by these processes. Suppose that the law of $Y_1$ is nonatomic. Then the following are equivalent:
\begin{enumerate}[(i)]
\item $X$ is compatible with $Y$ in the sense that $\F^X_n$ is conditionally independent of $\F^Y_N$ given $\F^Y_n$, for each $n=1,\ldots,N$.
\item There exists a sequence $X^{k}=(X^{k}_1,\ldots,X^{k}_N)$ of $\FF^Y$-adapted processes such that $(Y,X^{k}) \Rightarrow (Y,X)$ in $\Y^N \times \X^N$.
\item There exists a sequence $X^{k}=(X^{k}_1,\ldots,X^{k}_N)$ of $\FF^Y$-adapted processes such that $(Y,X^{k}) \Rightarrow (Y,X)$ in $\Y^N \times \X^N$ and $X^k \stackrel{d}{=} X$.
\end{enumerate}
\end{theorem}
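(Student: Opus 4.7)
\emph{Direction (ii) $\Rightarrow$ (i).} Any $\FF^Y$-adapted $X^k$ satisfies $\F^{X^k}_n \subseteq \F^Y_n$, which makes the conditional independence of $\F^{X^k}_n$ and $\F^Y_N$ given $\F^Y_n$ automatic. It then suffices to show that compatibility is preserved under weak convergence of joint laws, which I would isolate as a short closure lemma: reformulate compatibility as the identity $\E[f(X_{1:n}) g(Y_{n+1:N}) \mid Y_{1:n}] = \E[f(X_{1:n}) \mid Y_{1:n}] \cdot \E[g(Y_{n+1:N}) \mid Y_{1:n}]$ for bounded continuous $f, g$, and use the continuity of cylindrical conditional expectations under weak convergence on Polish path spaces.

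\emph{Direction (i) $\Rightarrow$ (ii).} I would proceed by induction on $N$. The base case $N = 1$ is exactly the static statement of Section~\ref{se:non-dynamic} applied to $(X_1, Y_1)$. For the inductive step, first observe that the length-$(N-1)$ restriction $(Y_{1:N-1}, X_{1:N-1})$ remains compatible: since $\F^Y_{N-1} \subseteq \F^Y_N$, the conditional independence of $\F^X_n$ and $\F^Y_N$ given $\F^Y_n$ directly implies the conditional independence of $\F^X_n$ and $\F^Y_{N-1}$ given $\F^Y_n$. The inductive hypothesis then supplies $\F^Y_n$-measurable $X^k_n$ for $n \leq N - 1$ with $(Y_{1:N-1}, X^k_{1:N-1}) \Rightarrow (Y_{1:N-1}, X_{1:N-1})$.

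To extend to time $N$, I would disintegrate $\mathrm{Law}(Y, X)$ to obtain the conditional kernel $k_N(y_{1:N}, x_{1:N-1}, dx_N)$ of $X_N$ given the past, and apply the static statement of Section~\ref{se:non-dynamic} with $(Y_{1:N}, X_{1:N-1})$ in the role of the nonatomic observation (nonatomicity being inherited from $Y_1$) to produce $\F^Y_N$-measurable approximants $\phi^m_N(Y_{1:N}, X_{1:N-1})$. A diagonal extraction in $(k, m)$ then yields a single sequence $X^k_N = \phi^{m(k)}_N(Y_{1:N}, X^k_{1:N-1})$ realizing the full joint convergence $(Y, X^k) \Rightarrow (Y, X)$.

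\textbf{Principal obstacle.} The delicate step is the time-$N$ extension: the approximating kernel is evaluated at $X^k_{1:N-1}$ rather than at $X_{1:N-1}$, and $X^k_{1:N-1}$ converges only in distribution to $X_{1:N-1}$. To push the diagonal argument through, I expect to need either a Skorokhod representation on an enlarged probability space to upgrade weak to almost-sure convergence along a subsequence, or a strengthened version of the static statement guaranteeing uniform or continuous dependence of the approximating functions on the auxiliary parameter. The convexity hypothesis on $\X$ enters the base case precisely to license barycenter/averaging constructions of such uniform approximations, and I expect it to drive this inductive extension as well.
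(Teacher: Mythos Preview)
Your overall strategy matches the paper's: induction on the time index for (i)$\Rightarrow$(ii), and a closure-under-weak-limits argument for (ii)$\Rightarrow$(i). You have also correctly isolated the genuine obstacle, namely that the time-$N$ approximant $\phi^m_N$ must be evaluated at $X^k_{1:N-1}$ rather than at $X_{1:N-1}$, with only weak convergence available.

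Two points deserve sharpening. First, of your two proposed resolutions, only the second actually works, and it is exactly what the paper does: Proposition~\ref{pr:density-one-marginal}(ii) furnishes \emph{continuous} approximating functions $\phi^m_N:\Y^N\times\X^{N-1}\to\X$, and continuity is precisely what allows the substitution $X_{1:N-1}\mapsto X^k_{1:N-1}$ via the continuous mapping theorem combined with Lemma~\ref{le:stable-convergence}. Skorokhod representation alone does not help, since almost-sure convergence of the argument still cannot propagate through a merely measurable $\phi^m_N$. The convexity hypothesis on $\X$ enters not through barycenter averaging but through the Dugundji extension theorem, which guarantees that the continuous extension produced by Lusin's theorem stays in $\X$.

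Second, two steps you pass over require the stable-convergence Lemma~\ref{le:stable-convergence} and the compatibility assumption in a nontrivial way. In (ii)$\Rightarrow$(i), the phrase ``continuity of cylindrical conditional expectations under weak convergence'' is too optimistic as stated; the paper instead writes compatibility as the linear constraint $\E[f(X_{1:n})h(Y_{1:n})(g(Y)-\E[g(Y)\mid\F^Y_n])]=0$ and passes to the limit using Lemma~\ref{le:stable-convergence}, which permits \emph{measurable} test functions in the $Y$-coordinate because the $Y$-marginal is fixed. In the inductive step, your hypothesis gives $(Y_{1:N-1},X^k_{1:N-1})\Rightarrow(Y_{1:N-1},X_{1:N-1})$, but to plug into $\phi^m_N(Y_{1:N},\cdot)$ you need $(Y_{1:N},X^k_{1:N-1})\Rightarrow(Y_{1:N},X_{1:N-1})$; this upgrade is not automatic and uses both Lemma~\ref{le:stable-convergence} and the conditional independence of $X_{1:N-1}$ and $Y_N$ given $Y_{1:N-1}$. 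The paper carries this out explicitly in the proof of Theorem~\ref{th:density-discrete}.
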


\begin{theorem}[Continuous time] \label{th:intro:density-continuous}
Consider two stochastic processes $Y=(Y_t)_{t \ge 0}$ and $X=(X_t)_{t \ge 0}$ with values in Polish spaces $\Y$ and $\X$, respectively, with $\X$ homeomorphic to a convex subset of a locally convex space.
Assume $X$ is continuous and $Y$ is c\`adl\`ag.
Let $\FF^Y=(\F^Y_t)_{t \ge 0}$ and $\FF^X=(\F^X_t)_{t \ge 0}$ denote the (unaugmented) filtrations generated by these processes. 
Assume either one of the following holds:
\begin{enumerate}[$\ \ $(a)]
\item $X_0$ and $Y_0$ are a.s.\ constant, and the law of $Y_t$ is nonatomic for every $t > 0$.
\item The law of $Y_0$ is nonatomic.
\end{enumerate}
Then the following are equivalent:
\begin{enumerate}[(i)]
\item $X$ is compatible with $Y$ in the sense that $\F^X_t$ is conditionally independent of $\F^Y_\infty$ given $\F^Y_t$, for each $t \ge 0$.
\item There exists a sequence $X^n$ of continuous $\FF^Y$-adapted processes such that $(X^n,Y) \Rightarrow (X,Y)$ in $C([0,\infty);\X) \times D([0,\infty);\Y)$, where $C$ and $D$ denote the continuous and Skorokhod path spaces, respectively.
\end{enumerate}
\end{theorem}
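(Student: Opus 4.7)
The direction (ii) $\Rightarrow$ (i) is the soft one. Each $X^n$ is by hypothesis $\FF^Y$-adapted, so $\F^{X^n}_t \subset \F^Y_t$ and $(X^n, Y)$ is trivially compatible. To pass to the limit, I would test compatibility $\F^X_t \perp \F^Y_\infty \mid \F^Y_t$ against product-form expressions in bounded continuous functions of finitely many coordinates of $X$ and $Y$, restricted to times outside the at most countable set of fixed discontinuities of $Y$. These tested identities survive the weak convergence $(X^n, Y) \Rightarrow (X, Y)$ and deliver compatibility of the limit.

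For the harder direction (i) $\Rightarrow$ (ii) my plan is to discretize time, invoke Theorem \ref{th:intro:density-discrete}, and then recover a continuous adapted process by a one-step delayed linear interpolation. Fix a horizon $T > 0$ and a partition $\pi = \{0 = t_0 < t_1 < \cdots < t_n = T\}$. The crucial move is to repackage $Y$ into path-valued blocks $\tilde Y_i := Y|_{[t_{i-1}, t_i]}$ taking values in the Polish space $D([t_{i-1}, t_i]; \Y)$, and to form snapshots $\tilde X_i := X_{t_i}$. Because $\sigma(\tilde Y_1, \ldots, \tilde Y_i) = \F^Y_{t_i}$, the continuous compatibility of $(X, Y)$ restricts immediately to discrete compatibility of $(\tilde X, \tilde Y)$. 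Nonatomicity of $\tilde Y_1$ is automatic: under (a) it contains the nonatomic $Y_{t_1}$, while under (b) it contains the nonatomic $Y_0$. Theorem \ref{th:intro:density-discrete} therefore supplies $\F^Y_{t_i}$-measurable $\tilde X^{\pi, k}_i$ with $(\tilde Y, \tilde X^{\pi, k}) \Rightarrow (\tilde Y, \tilde X)$; concatenating the path-blocks back into $Y$ upgrades this to $(Y, \tilde X^{\pi, k}) \Rightarrow (Y, (X_{t_i})_{i=1}^n)$ in $D([0, T]; \Y) \times \X^n$.

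To promote these snapshots to a continuous adapted path, for $i \ge 1$ and $t \in [t_i, t_{i+1}]$ I set $\hat X^{\pi, k}_t$ to be the affine combination of $\tilde X^{\pi, k}_{i-1}$ at $t_i$ and $\tilde X^{\pi, k}_i$ at $t_{i+1}$; since both endpoints are $\F^Y_{t_i}$-measurable, the interpolant is $\F^Y_t$-measurable throughout, and the convex structure of $\X$ makes the interpolation well-defined. The initial slab $[0, t_1]$ is handled by freezing at the constant $X_0$ in case (a), or by prepending an $\F^Y_0$-measurable approximation of $X_0$ in case (b). Delayed linear interpolation is a continuous map $\X^n \to C([0, T]; \X)$, so the continuous mapping theorem yields $(Y, \hat X^{\pi, k}) \Rightarrow (Y, \hat X^\pi)$, with $\hat X^\pi$ the analogous delayed interpolation of the true snapshots $(X_{t_i})$. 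Uniform continuity of $X$ on $[0, T]$ forces $\hat X^\pi \to X$ in $C([0, T]; \X)$ almost surely as $|\pi| \to 0$, and a standard diagonal extraction in the mesh and the horizon $T$ produces a sequence with $(X^n, Y) \Rightarrow (X, Y)$ in the full path spaces.

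The main obstacle is keeping the entire path of $Y$ present at every stage while simultaneously maintaining adaptedness. Applied to the value skeleton $(Y_{t_i})$, Theorem \ref{th:intro:density-discrete} would supply joint convergence only with the finite-dimensional skeleton of $Y$; worse, since $Y$ is not assumed Markov, continuous compatibility of $(X, Y)$ need not imply discrete compatibility of the skeletons $((X_{t_i}), (Y_{t_i}))$. Packaging $Y$ into the Polish path-blocks $\tilde Y_i$, whose generated $\sigma$-algebra is exactly $\F^Y_{t_i}$, simultaneously fixes both issues: discrete compatibility follows for free, and joint weak convergence against the full process $Y$ is recovered by concatenation. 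The one-step delay in the interpolation then resolves the remaining adaptedness gap that would spoil a naive interpolation between $\tilde X^{\pi, k}_i$ and $\tilde X^{\pi, k}_{i+1}$. Once this repackaging and delay are in place, the remaining steps are essentially routine.
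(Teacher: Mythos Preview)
Your proposal is correct and follows the same overall strategy as the paper: discretize time, invoke the discrete-time result, and recover a continuous $\FF^Y$-adapted process by one-step-delayed linear interpolation. The paper likewise builds the delay into its piecewise-linear approximation, and the direction (ii)$\Rightarrow$(i) is handled in both cases by passing the compatibility identity through the limit using that the $Y$-marginal is fixed (stable convergence, Lemma~\ref{le:stable-convergence}); your restriction to times outside fixed discontinuities of $Y$ is harmless but unnecessary, since one can test against merely measurable functions of $Y$.

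The genuine difference is in how the full path of $Y$ is kept in play. The paper first proves a strengthened discrete theorem (Theorem~\ref{th:density-discrete}) carrying an auxiliary variable $Z$, and then applies it with $Z=Y$ (the entire c\`adl\`ag path) and $Y_k=U_{t_k}$ generating $\F^Y_{t_k}$; this delivers joint convergence with the full $Y$ in one stroke and avoids any horizon argument. Your path-block repackaging $\tilde Y_i = Y|_{[t_{i-1},t_i]}$ is a neat substitute that uses only Theorem~\ref{th:intro:density-discrete}, but it captures only $Y|_{[0,T]}$, so you must (a) upgrade from joint convergence with $Y|_{[0,T]}$ to joint convergence with the full $Y$---which does go through, but uses compatibility once more via $\F^X_T \perp \F^Y_\infty \mid \F^Y_T$---and then (b) diagonalize in $T$. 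A cleaner variant of your idea that sidesteps both steps is to let the last block be $\tilde Y_n = Y|_{[t_{n-1},\infty)}$, so that $\sigma(\tilde Y_1,\ldots,\tilde Y_n)=\F^Y_\infty$; the discrete compatibility hypothesis then reads $X_{t_i}\perp \F^Y_\infty \mid \F^Y_{t_i}$, which is exactly what (i) gives, and you recover the full $Y$ immediately. Either way, the two arguments are minor reorganizations of one another.
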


Note that in discrete time, in Theorem \ref{th:intro:density-discrete}, we may keep the $X$-marginal fixed, but we cannot do so in continuous time, at least not at the level of generality of Theorem \ref{th:intro:density-continuous}. See Section \ref{se:2marginals-dynamic} for further discussion and a counterexample, which shows that there may fail to exist \emph{any} coupling of the two given processes $X$ and $Y$ which renders $X$ adapted to the filtration of $Y$.
The proofs of Theorems \ref{th:intro:density-discrete} and \ref{th:intro:density-continuous} are given in Sections \ref{se:discretetime-twomarg} and \ref{se:dynamic-cont}, respectively. We also give two alternatives to Theorem \ref{th:intro:density-continuous} in Section \ref{se:dynamic-cont}, treating the cases where $X$ has c\`adl\`ag or merely measurable trajectories.

A measure-theoretic restatement of these results will aid in further elaboration. In the setting of either Theorem \ref{th:intro:density-discrete} or \ref{th:intro:density-continuous}, let $\nu$ and $\mu$ denote the respective laws of $Y$ and $X$ on path space; in discrete time, the path spaces are $\Y^N$ and $\X^N$, and in continuous time $D([0,\infty);\Y)$ and $C([0,\infty);\X)$. Let $\Pi^c_0(\nu)$ denote the set of joint laws of $(X,Y)$, where $X$ ranges over $\FF^Y$-adapted processes and $Y \sim \nu$, and let $\Pi^c_0(\nu,\mu)$ denote the subset for which $X \sim \mu$. Let $\Pi^c(\nu)$ denote the set of joint laws of $(X,Y)$ with $Y \sim \nu$ and satisfying the compatibility condition (i), and let $\Pi^c(\nu,\mu)$ denote the subset for which $X \sim \mu$. Note that Theorem \ref{th:intro:density-discrete} states that the closure of $\Pi^c_0(\nu,\mu)$ equals $\Pi^c(\nu,\mu)$, and thus also the closure of $\Pi^c_0(\nu)$ equals $\Pi^c(\nu)$. In continuous time, Theorem \ref{th:intro:density-continuous} shows only the latter.

We also discuss some questions of convexity and extreme points.
The set of joint laws $\Pi^c(\mu)$ is always convex. In discrete time (and in the case of randomized stopping times discussed below), $\Pi^c_0(\mu)$ is precisely the set of extreme points of $\Pi^c(\mu)$. But this fails in general in continuous time, and we give a counterexample built on a stochastic differential equation (SDE) which admits a weak solution but no strong solution. See Section \ref{se:extremepoints} for details.
Fixing both marginals, it is well known that $\Pi_0(\nu,\mu)$ is a proper subset of the set of extreme points of $\Pi(\nu,\mu)$, and it is notoriously difficult to characterize the extreme points of $\Pi(\nu,\mu)$; to enter this rabbit hole, see \cite{benes1991extremal,sudakov1979geometric}. Hence, we do not attempt to address the undoubtedly challenging problem of characterizing the extreme points of $\Pi^c(\nu,\mu)$.

Lastly, we show in Section \ref{se:randomized-stopping} that the set of \emph{randomized stopping times} is the closure of the set of true stopping times:

\begin{theorem} \label{th:intro:density-stopping}
Consider a c\`adl\`ag stochastic process $Y=(Y_t)_{t \ge 0}$ with values in a Polish space $\Y$, and let $\tau$ be a random time.
Assume $Y_t$ is nonatomic for every $t > 0$.
Let $\FF$ denote the augmented (complete and right-continuous) filtration generated by $Y$.
Then the following are equivalent:
\begin{enumerate}[(i)]
\item $\tau$ is an $\FF$-randomized stopping time in the sense that $\PP(\tau \le t \,|\, \F_\infty) = \PP(\tau \le t \,|\, \F_t)$ a.s., for every $t \ge 0$.
\item There exists a sequence of $\FF$-stopping times $\tau_n$ such that $(Y,\tau_n) \Rightarrow (Y,\tau)$ in $D([0,\infty);\Y) \times [0,\infty]$.
\end{enumerate}
\end{theorem}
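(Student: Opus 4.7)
First, I would dispose of the easy direction (ii) $\Rightarrow$ (i). Each $\FF$-stopping time $\tau_n$ satisfies $\PP(\tau_n \le t \mid \F_\infty) = 1_{\{\tau_n \le t\}} = \PP(\tau_n \le t \mid \F_t)$ trivially, so the randomized stopping-time condition holds along the sequence. The task is then to show that condition (i) is closed in the weak topology once the $Y$-marginal is held fixed. Following the closure argument used for the compatibility set $\Pi^c(\mu)$ in Section \ref{se:dynamic}, I would express (i) as the countable family of equalities $\E[\varphi(Y) 1_{\{\tau \le t\}}] = \E[\varphi(Y)\E(1_{\{\tau \le t\}} \mid \F_t)]$ as $\varphi$ ranges over a countable dense subset of $C_b(D([0,\infty);\Y))$ and $t$ over the rationals, and note that with the $Y$-marginal fixed both sides depend weakly continuously on the joint law of $(Y, \tau)$.

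For the main direction (i) $\Rightarrow$ (ii), my plan is to reduce to Theorem \ref{th:intro:density-continuous}. Encode $\tau$ as the nondecreasing, c\`adl\`ag, $[0,1]$-valued process $X_t := 1_{\{\tau \le t\}}$. Because $\F^X_t = \sigma(\tau \wedge t)$, the compatibility condition $\F^X_t \perp \F^Y_\infty \mid \F^Y_t$ is equivalent to $\PP(\tau \le s \mid \F^Y_\infty) = \PP(\tau \le s \mid \F^Y_t)$ for every $s \le t$, which is the randomized stopping-time condition (standard null-set arguments reconcile the augmented right-continuous $\FF$ used in the statement with the raw $\FF^Y$ used in Theorem \ref{th:intro:density-continuous}). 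After splitting off the $\F_0$-measurable event $\{\tau = 0\}$ and handling it trivially, I would apply the c\`adl\`ag variant of Theorem \ref{th:intro:density-continuous} alluded to after its statement (appropriate here since $X$ jumps) under hypothesis (a), using the nonatomicity of $Y_t$ for $t > 0$. This produces $\FF^Y$-adapted c\`adl\`ag approximants $X^n$ with $(X^n, Y) \Rightarrow (X, Y)$; composing each $X^n$ pathwise with $x \mapsto \sup_{r \le \cdot}(x_r \vee 0) \wedge 1$ preserves adaptedness and the weak limit, so I may assume $X^n$ is nondecreasing and $[0,1]$-valued.

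I then convert the $X^n$ into stopping times by $\tau_n := \inf\{t \ge 0 : X^n_t \ge 1/2\}$, with $\inf\emptyset = \infty$; right-continuity of $\FF$ ensures that each $\tau_n$ is an $\FF$-stopping time. By Skorokhod representation I realize the convergence $(X^n, Y) \to (X, Y)$ almost surely in $D \times D$. Since $X$ takes only the values $0$ and $1$, the limit has no horizontal section at $1/2$, so the first-passage functional for nondecreasing c\`adl\`ag paths is continuous at $X$, delivering $\tau_n \to \tau$ almost surely in the compact space $[0,\infty]$; hence $(Y, \tau_n) \Rightarrow (Y, \tau)$.

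The main obstacle I anticipate is precisely this hitting-time convergence. Because the limit path $X$ crosses $1/2$ in a single jump, the c\`adl\`ag approximants $X^n$ could, in principle, briefly exceed $1/2$ shortly before $\tau$ or linger just below $1/2$ shortly after, so continuity of the first-passage map at $X$ has to be checked rather than invoked blindly. If this continuity breaks at the fixed threshold $1/2$ on a set of positive probability, I would fall back on selecting the threshold $c \in (0,1)$ at random and invoking Fubini's theorem: for almost every $c$, the corresponding first-passage functional is continuous at $X$, which suffices to pick a single good $c$ and run the construction with that threshold.
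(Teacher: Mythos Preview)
Your overall strategy matches the paper's: encode $\tau$ as the c\`adl\`ag indicator process $H_t = 1_{\{\tau \le t\}}$, apply the c\`adl\`ag density theorem (Theorem~\ref{th:density-cadlag}) to obtain $\FF$-adapted approximants, and recover stopping times via the first-passage map $\Phi(h) = \inf\{t : h(t) \ge 1/2\}$. The paper even proves your anticipated continuity lemma (Lemma~\ref{le:time-map}) showing $\Phi$ is continuous at every path of the form $1_{[s,\infty)}$, so your running-max post-processing is unnecessary, though harmless. However, two steps in your write-up do not go through as stated.

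\textbf{Closure argument for (ii) $\Rightarrow$ (i).} You claim that with the $Y$-marginal fixed, the quantities $\E[\varphi(Y)1_{\{\tau \le t\}}]$ depend weakly continuously on the law of $(Y,\tau)$. This is false: the indicator $s \mapsto 1_{\{s \le t\}}$ is discontinuous at $s = t$, so Lemma~\ref{le:stable-convergence} only gives continuity when $\PP(\tau = t) = 0$. Taking $t$ over the rationals does not help, since $\tau$ may have rational atoms. The paper's fix is to restrict to the co-countable (hence dense) set $\T = \{t : \PP(\tau = t) = 0\}$, pass to the limit there, and then invoke Lemma~\ref{le:compatibility-rightcontinuous} (backward martingale convergence) to recover the condition for all $t$ with $\F_t$ replaced by $\F_{t+}$; right-continuity of the augmented filtration closes the gap.

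\textbf{Handling of $\{\tau = 0\}$.} You propose to split off the ``$\F_0$-measurable event $\{\tau = 0\}$'' before applying the density theorem under hypothesis (a). But for a randomized stopping time $\{\tau = 0\}$ is \emph{not} $\F_0$-measurable in general (take $\tau$ independent of $Y$ with $\PP(\tau = 0) \in (0,1)$), so you cannot glue a deterministic $\tau_n \equiv 0$ on that event to an approximation on its complement and still obtain an $\FF$-stopping time. The paper instead approximates $\tau$ by $\tau + 1/n$ (Lemma~\ref{le:randomized-approximation}), which remains a randomized stopping time and forces $H_0 = 0$ a.s., so that Assumption~\hyperref[ass:C]{(C)} is satisfied and Theorem~\ref{th:density-cadlag} applies directly.
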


Some of these theorems are not completely new, appearing in special cases and with similar proof ideas in \cite[Lemma 3.11]{carmonadelaruelacker-mfgcommonnoise} and \cite[Section 6]{carmonadelaruelacker-mfgoftiming}, written up also in the recent book \cite[Sections II.1.1.1, II.7.2.5]{carmonadelarue-book}.
The main novelty of this paper is to show in Theorem \ref{th:intro:density-discrete} that we may keep \emph{both} marginals fixed, not just the $Y$-marginal.
Additionally, and the continuous-time counterexamples of Sections \ref{se:2marginals-dynamic} and \ref{se:counterexample} are new, and we extend the results cited above to their natural levels of generality. The motivation for developing and consolidating these kinds of results in a single, concise reference stems from their relevance in increasingly diverse areas of application, some of which we discuss in the next few paragraphs.

\subsection{Optimal transport}
Optimal transport (see \cite{villanibook,villani-oldandnew,rachev-ruschendorf} for overviews) is a natural domain of application for the kinds of results discussed in this paper. Classically, letting $\Pi(\nu,\mu)$ denote the set of joint laws on $\Y \times \X$ with first marginal $\nu$ and second marginal $\mu$, the Kantorovich formulation of optimal transport is to optimize a functional of the form
\begin{align}
J(\gamma)=\int_{\Y \times \X}\Psi\,d\gamma, \qquad \gamma \in \Pi(\nu,\mu), \label{def:intro:J}
\end{align}
where $\Psi : \Y \times \X \to \R \cup \{\infty\}$ is some cost function.
This can be seen as a relaxation of the original Monge formulation, in which we optimize over the subset $\Pi_0(\nu,\mu)$ of couplings of the form $\nu(dy)\delta_{\varphi(y)}(dx)$, where $\varphi : \Y \to \X$ is measurable. When $\nu$ is nonatomic, it is well known \cite{ambrosio2003lecture,pratelli2007equality} that $\Pi_0(\nu,\mu)$ is dense in $\Pi(\nu,\mu)$, showing that the Kantorovich formulation is a genuine relaxation of the Monge formulation, at least when $J$ is continuous on $\Pi(\nu,\mu)$. 
See Sections \ref{se:2marginals} and \ref{se:optimaltransport} for discussion of this result.

Recent developments on \emph{causal optimal transport} are more in line with the focus of this paper on the dynamic setting, and we focus in this paragraph on the discrete-time setting.  Causal optimal transport, introduced by Lassalle in \cite{lassalle2013causal} and studied further in \cite{acciaio2016causal, veraguas2016causal, AcBaCa19} among others,  pertains to the optimization of functionals of the form
\begin{align}
J(\gamma) = \int_{\Y^N \times \X^N} \Psi\,d\gamma, \qquad \gamma \in \Pi^c(\nu,\mu).
\end{align}
In other words, this is an optimal transport problem with the additional constraint of compatibility.
In the causal setting, the corresponding Monge problem is to optimize $J$ only over the subset $\Pi^c_0(\nu,\mu)$ of adapted Monge couplings.
Using Theorem \ref{th:intro:density-discrete}, we can show that the Monge and Kantorovich values agree for causal optimal transport in discrete time, when the first marginal is nonatomic and the cost function sufficiently nice:

\begin{proposition} \label{pr:causal-transport-attained}
Suppose $\X$ and $\Y$ are Polish spaces and $\mu \in \P(\X^N)$, $\nu \in \P(\Y^N)$. Suppose $\Psi : \X^N \times \Y^N \rightarrow \R \cup\{\infty\}$ is of the form
\[
\Psi(y,x) = \psi(g(x),h(y)),
\]
for some Borel measurable functions $g : \X^N \to \widetilde\X$, $h : \Y^N \to \widetilde\Y$, and $\psi : \widetilde\X \times \widetilde\Y \to \R \cup\{\infty\}$, where $\widetilde\X$ and $\widetilde\Y$ are Polish spaces. Assume $\psi$  is bounded from below and lower semicontinuous in one of its variables; that is, either $\psi(\tilde x,\cdot)$ is lower semicontinuous for each $\tilde x$, or $\psi(\cdot,\tilde y)$ is lower semicontinuous for each $\tilde y$. Then the infimum $\inf_{\gamma \in \Pi^c(\nu,\mu)}\int \Psi\,d\gamma$ is attained. If, in addition it holds that
\begin{enumerate}[(i)]
\item The first marginal of $\nu\in\P(\Y^N)$ is nonatomic,
\item $\psi$ jointly continuous and finite-valued, and 
\item $|\Psi(y,x)| \le a(y)+b(x)$ for some $a \in L^1(\mu)$ and $b \in L^1(\nu)$,
\end{enumerate}
then 
\begin{align}
\inf_{\gamma \in \Pi^c(\nu,\mu)}\int \Psi\,d\gamma = \inf_{\gamma \in \Pi^c_0(\nu,\mu)}\int \Psi\,d\gamma. \label{eq:causal-monge=kantorovich}
\end{align}
\end{proposition}

The proof is given in Section \ref{se:optimaltransport}.
It remains unclear to what extent one can hope for a continuous-time analogue of \eqref{pr:causal-transport-attained}; see Section \ref{se:2marginals-dynamic} for futher discussion including a counterexample and a remarkable example due to \'Emery \cite{emery2005certain}.

\subsection{Optimal control and stopping}
In stochastic optimal control theory, one seeks to choose an adapted process in order to minimize some cost, expressed as a continuous functional of the joint law of the control and noise processes. Typically, $Y$ is an underlying noise process, and the control process $X$ must be adapted to $Y$. Framing the problem as optimization over the family $\Pi^c_0(\mu)$ of admissible joint distributions, one is naturally inclined to take the closure of $\Pi^c_0(\mu)$ to obtain a convenient topological setting, and this closure is precisely what we identified as $\Pi^c(\mu)$ above. This relaxation is now standard in stochastic control, both in discrete \cite{bertsekasshreve} and continuous time \cite{elkaroui-compactification,haussmannlepeltier-existence,kurtzstockbridge-1998}, though it is usually justified by extreme point arguments rather than denseness. In mean field contexts \cite{carmona2015forward,lacker2017limit,pham2015bellman}, however, the objective function depends nonlinearly on the law of the controlled process; extreme point arguments then break down, but one can still justify the relaxation by a denseness argument. See Section \ref{se:optimalcontrol} for details.

A similar strategy applies in optimal stopping, leading to the notion of \emph{randomized stopping time} (see \cite{baxter1977compactness,edgar1982compactness,ghoussoub1982integral}), where $\Pi^c_0(\mu)$ corresponds to true stopping times. 
This relaxation played an important role in recent analysis of American option pricing \cite{bayraktar2017super} and optimal Skorokhod embedding problems \cite{beiglboeck-cox-huesmann}.
A recent paper \cite{beiglbock2019fine} proves a denseness result in the Skorokhod embedding context which is analogous to our Theorem \ref{th:intro:density-discrete}. Namely, the set of randomized stopping times which embed a given law into a Brownian motion is dense in the set of (non-randomized) stopping times which embed the same law.

\subsection{Stochastic games}
In static (one-shot) games, it is well known that Nash equilibria typically exist among \emph{mixed strategies}, in which agents independently randomize their actions, but not necessarily among \emph{pure strategies}, in which agents choose (deterministic) actions. When stochastic factors are present, the natural analogue of a pure strategy is a measurable function from the stochastic factor to the action space, and a good notion of mixed strategy should again convexify and/or compactify the set of pure strategies. This leads, for instance, to the notion of of \emph{distributional strategy} in \cite{milgromweber-distributional}.

Similar ideas are useful in the analysis of dynamic stochastic games, where the natural analogue of a pure strategy is an adapted process. Indeed, special cases of the theorems announced above appeared first in the second named author's work on mean field games \cite{carmonadelaruelacker-mfgcommonnoise,carmonadelaruelacker-mfgoftiming}. Therein, the topology of $\Pi^c(\mu)$ is well-suited to compactness arguments, whereas optimality criteria are more easily checked using the dense subset $\Pi^c_0(\mu)$. These ideas again proved fruitful in studying the limits of $n$-player games and control problems in \cite{lacker2016general,lacker2017limit}. 

\subsection{Filtration enlargement} 
The notion of compatibility mentioned in the theorems above has appeared in a variety of contexts before, particularly in the context of enlargement of filtration, though we roughly adopt the terminology of Kurtz \cite{kurtz-yw2013}. 
The statement that $X$ is compatible with $Y$ is equivalent to the statement that every $\FF^Y$-martingale is an $\FF^{X,Y}$-martingale. We review this and other characterizations in Section \ref{se:general-compatibility}. In the literature on enlargements of filtrations, this property goes by the name \emph{H-hypothesis} or \emph{immersion}. See \cite{bremaudyor-changesoffiltrations} for its appearance in filtering, \cite{elliott2000models} for applications in credit risk, or the recent book \cite{aksamit2016enlargements}. Theorems \ref{th:intro:density-continuous} and \ref{th:intro:density-stopping} add to the pantheon of characterizations of the H-hypothesis.

\subsection{Organization of the paper}
The paper is organized as follows. Section \ref{se:non-dynamic} is something of a warm-up, collecting a number of useful facts about the set of joint distributions on $\X \times \Y$ with either one or both of the marginals fixed. Most of the results here are folklore, but complete proofs are provided. Sections \ref{se:dynamic}--\ref{se:dynamic-cont} turn to the dynamic settings in which $\X$ and $\Y$ are replaced with path spaces; this is where proofs of (more general forms of) Theorems \ref{th:intro:density-discrete} and \ref{th:intro:density-continuous} are provided.
Section \ref{se:randomized-stopping} discusses randomized stopping times and proves Theorem \ref{th:intro:density-stopping}. As mentioned briefly above, sets of joint distributions of the form of $\Pi^c(\mu)$ above are convex, and Section \ref{se:extremepoints} discusses characterizations of their extreme points, as well as a counterexample in continuous time. Lastly, Sections \ref{se:optimalcontrol} highlights an application in stochastic optimal control.

\subsection*{Notation}
Our notation throughout is as follows. We write $\R_+ = [0,\infty)$. Given a measurable space $(\Omega,\F)$, we write $\P(\Omega,\F)$ for the set of probability measures, and we abbreviate this to $\P(\Omega)$ when the $\sigma$-field is understood.
Every topological space $\X$ is equipped with its Borel $\sigma$-field, and accordingly we write $\P(\X)$ for the set of Borel probability measures on $\X$. Unless otherwise stated, we equip $\P(\X)$ with the usual topology of weak convergence, induced by bounded continuous test functions.
 The set of continuous functions from $\X$ to $\Y$ is denoted $C(\X;\Y)$. If the range space is $\R$, we write simply $C(\X)$ instead of $C(\X;\R)$.

\section{The static case} \label{se:non-dynamic}

This section serves as a warm-up, reviewing some known facts about weak convergence of joint distributions with either one or two fixed marginals for which concise proofs can be difficult to locate in the literature. 

\subsection{One fixed marginal}

First we recall an important lemma on weak convergence. It is a consequence, for instance, of \cite[Corollary 2.9]{jacodmemin-stable}, but we include a direct proof. In this section, we fix two Polish spaces $\X$ and $\Y$, as well as $\mu \in \P(\X)$. Let
\begin{align}
\Pi(\mu)  = \{P \in \P(\X \times \Y) : P(\cdot \times \Y) = \mu \} \label{def:Pi(mu)}
\end{align}
denote the set of joint laws with first marginal $\mu$.

\begin{lemma} \label{le:stable-convergence}
Suppose $P,P_n\in\Pi(\mu)$, with $P_n \rightarrow P$ weakly. Let $\varphi : \X \times \Y \rightarrow \R$ be bounded and satisfy the following:
\begin{enumerate}
\item $\varphi(\cdot,y)$ is measurable for each $y \in \Y$.
\item $\varphi(x,\cdot)$ is continuous for $\mu$-a.e. $x \in \X$.
\end{enumerate}
Then $\int\varphi\,dP_n \rightarrow \int\varphi\,dP$.
\end{lemma}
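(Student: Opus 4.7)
The plan is to establish the convergence first for test functions of the product form $f(x)g(y)$ with $f$ bounded Borel and $g$ bounded continuous, and then to approximate a general $\varphi$ by a finite linear combination of such products, with the approximation error controlled uniformly in $n$. As a preliminary, I would fix a Borel $\mu$-null set $N \subseteq \X$ such that $\varphi(x,\cdot)$ is continuous for every $x \in N^c$ (guaranteed by hypothesis (2)), and replace $\varphi$ by $\varphi(x,y)\,\mathbf{1}_{N^c}(x)$; this changes none of the integrals $\int \varphi\,dP_n$ or $\int \varphi\,dP$, because all those measures lie in $\Pi(\mu)$, and it lets me assume $\varphi(x,\cdot)$ is continuous for \emph{every} $x \in \X$.

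For the product claim, let $f : \X \to \R$ be bounded Borel and $g : \Y \to \R$ bounded continuous. Given $\eta > 0$, approximate $f$ in $L^1(\mu)$ by some $f_\eta \in C_b(\X)$, which is possible because $C_b(\X)$ is dense in $L^1(\mu)$ on a Polish space. Then $f_\eta g \in C_b(\X \times \Y)$, so $\int f_\eta g\,dP_n \to \int f_\eta g\,dP$ by the weak convergence $P_n \to P$. The substitution error is uniformly small in $n$ thanks to the shared marginal: for every $Q \in \Pi(\mu)$, $|\int (f - f_\eta)g\,dQ| \le \|g\|_\infty \int_\X |f - f_\eta|\,d\mu < \|g\|_\infty \eta$. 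Sending $\eta \downarrow 0$ yields $\int fg\,dP_n \to \int fg\,dP$.

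To finish, fix $\epsilon > 0$ and use Prokhorov's theorem to pick a compact $K \subseteq \Y$ with $P(\X \times K^c) < \epsilon$ and $P_n(\X \times K^c) < \epsilon$ for all $n$. For every $x$, $\varphi(x,\cdot)\big|_K$ is uniformly continuous, so the modulus $\omega(x,\delta) := \sup\{|\varphi(x,y) - \varphi(x,y')| : y,y' \in K,\ d(y,y') < \delta\}$ is bounded by $2\|\varphi\|_\infty$ and tends to $0$ as $\delta \downarrow 0$ for every $x$; dominated convergence then supplies a $\delta > 0$ with $\int \omega(x,\delta)\,\mu(dx) < \epsilon$. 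Cover $K$ by finitely many $\delta$-balls $B(y_i,\delta)$ and choose a continuous partition of unity $\{\chi_i\}_{i=1}^m$ on $\Y$ with $\mathrm{supp}\,\chi_i \subseteq B(y_i,\delta)$, $\sum_i \chi_i \equiv 1$ on $K$, and $\sum_i \chi_i \le 1$ everywhere (a standard construction on metric spaces). Setting $\tilde\varphi(x,y) := \sum_i \varphi(x,y_i)\chi_i(y)$, the pointwise bound $|\varphi(x,y) - \tilde\varphi(x,y)| \le \omega(x,\delta)$ on $\X \times K$ gives $|\int \varphi\,dQ - \int \tilde\varphi\,dQ| \le \epsilon + 2\|\varphi\|_\infty\epsilon$ uniformly for $Q \in \{P_n, P\}$, while the product claim applied termwise to the finite sum $\tilde\varphi$ yields $\int \tilde\varphi\,dP_n \to \int \tilde\varphi\,dP$. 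Combining these, $\limsup_n|\int \varphi\,dP_n - \int \varphi\,dP| \le 2\epsilon(1 + 2\|\varphi\|_\infty)$, and letting $\epsilon \downarrow 0$ finishes the proof.

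The main obstacle I would expect is that $\varphi$ is only pointwise continuous in $y$, and only for $\mu$-a.e. $x$, so there is no joint (uniform) continuity structure to exploit. The structural feature that saves the day is the shared first marginal $\mu$: it lets me bound errors in the $x$-direction by $\mu$-integrals rather than $P_n$- or $P$-integrals, which is precisely what makes both the product reduction and the partition-of-unity truncation uniform in $n$.
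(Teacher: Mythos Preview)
Your proof is correct, and the approach is genuinely different from the paper's. Both arguments begin by using Prokhorov's theorem to restrict attention to a compact $K\subseteq\Y$ and both exploit the common first marginal $\mu$ to make approximation errors uniform in $n$; the divergence is in how the approximation of $\varphi$ on $\X\times K$ is carried out. The paper views $x\mapsto\varphi(x,\cdot)|_K$ as a measurable map into the Banach space $C(K)$, applies a vector-valued Lusin theorem to replace it on a set of $\mu$-measure $\ge 1-\delta$ by a continuous $C(K)$-valued map, and then invokes the Tietze extension theorem to obtain a globally continuous $\bar\varphi_\delta$ on $\X\times\Y$ to which weak convergence applies directly. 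You instead first isolate the product case $f(x)g(y)$ with $f$ bounded Borel and $g$ bounded continuous (using only scalar $L^1(\mu)$-density of $C_b(\X)$), and then reduce the general $\varphi$ to a finite sum of such products via a partition of unity on $K$ and the modulus $\omega(x,\delta)$. Your route avoids both the Banach-space-valued Lusin theorem and Tietze extension, at the cost of the extra partition-of-unity step; it also produces the product lemma as a standalone byproduct, which the paper proves separately later (Lemma~\ref{le:stable-convergence-two-marginals} treats a related product statement). Two small points worth making explicit in a final write-up: choose the centers $y_i$ in $K$ so that the bound $|\varphi(x,y)-\varphi(x,y_i)|\le\omega(x,\delta)$ is indeed licensed by the definition of $\omega$, and note that $x\mapsto\omega(x,\delta)$ is measurable because the supremum may be taken over a countable dense subset of the open set $\{(y,y')\in K\times K:d(y,y')<\delta\}$.
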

\begin{proof}
Let $c > 0$ be such that $|\varphi(x,y)| \le c$ for all $(x,y) \in \X \times \Y$.
Fix $\epsilon > 0$, and use Prokhorov's theorem to find a compact set $K \subset \Y$ such that $P_n(\X \times K) \ge 1-\epsilon$ for all $n$. Let $\X_0 \subset \X$ be a Borel set with $\mu(\X_0)=1$ such that $\varphi(x,\cdot)$ is continuous for every $x \in \X_0$. Consider the map $\Phi : \X_0 \rightarrow C(K)$ defined by $\Phi(x)(y) = \varphi(x,y)$. It is easy to prove (and follows immediately from \cite[Theorem 4.55]{aliprantisborder}) that $\Phi$ is Borel measurable.

We next show that for each $\delta > 0$ there exists a \emph{continuous} function $\varphi_\delta : \X \times K \rightarrow \R$ such that $|\varphi_\delta| \le c$ pointwise and
\[
\mu\{x \in \X : \varphi_\delta(x,y) \equiv \varphi(x,y), \ \forall y \in y\} \ge 1-\delta.
\]
First extend the domain of $\Phi$ to all of $\X$ by choosing arbitrarily some $f_0 \in C(K)$ and setting $\Phi(x) = f_0$ for $x \notin \X_0$, noting that $\Phi$ remains measurable. Next, apply Lusin's theorem \cite[Theorem 7.1.13]{bogachev-measuretheory} to find, for each $\delta > 0$, a continuous function $\Phi_\delta : \X \rightarrow C(K)$ such that $\mu\{x \in \X : \Phi_\delta(x) = \Phi(x)\} \ge 1-\delta$. Finally, define $\varphi_\delta(x,y) = (\Phi_\delta(x)(y) \wedge c) \vee (-c)$, where $c$ was the bound on $|\varphi|$.

With these preparations out of the way, the proof proceeds first with the bound
\begin{align*}
\left|\int_{\X\times \Y}\varphi\,dP_n - \int_{\X\times \Y}\varphi\,dP\right| 	&\le \left|\int_{\X\times K}\varphi\,dP_n - \int_{\X\times K}\varphi\,dP\right| + 2\epsilon c \\
	&\le \left|\int_{\X\times K}\varphi_\delta\,dP_n - \int_{\X\times K}\varphi_\delta\,dP\right| + (2\epsilon  + 4\delta)c.
\end{align*}
Because $\varphi_\delta$ is continuous on the closed set $\X \times K$, it admits a continuous extension $\bar{\varphi}_\delta$ to all of $\X \times \Y$ with $|\bar{\varphi}_\delta| \le c$ pointwise, by the Tietze extension theorem. Thus 
\begin{align*}
\left|\int_{\X\times \Y}\varphi\,dP_n - \int_{\X\times \Y}\varphi\,dP\right| 	&\le \left|\int_{\X\times \Y}\bar{\varphi}_\delta\,dP_n - \int_{\X\times \Y}\bar{\varphi}_\delta\,dP\right| + 4c(\epsilon  + \delta)
\end{align*}
Finally, continuity of $\bar{\varphi}_\delta$ and weak convergence of $P_n$ to $P$ imply 
\[
\limsup_{n\rightarrow\infty}\left|\int_{\X\times \Y}\varphi\,dP_n - \int_{\X\times \Y}\varphi\,dP\right| \le 4c(\epsilon  + \delta).
\]
As $\epsilon$ and $\delta$ were arbitrary, the proof is complete.
\end{proof}

The first part of the following proposition, mentioned in the introduction, is folklore, and we prove in Proposition \ref{pr:density-two-marginals} the more general version with both marginals fixed. See also \cite[Theorem 9.3]{ambrosio2003lecture} and \cite[Theorem 2.2.3]{youngmeasuresontopologicalspaces}.

\begin{proposition} \label{pr:density-one-marginal}
The following hold:
\begin{enumerate}[(i)]
\item If $\mu$ is nonatomic, then the following set is dense in $\Pi(\mu)$:
\begin{align}
\Pi_0(\mu) := \left\{\mu(dx)\delta_{\varphi(x)}(dy) \in \P(\X \times \Y) : \varphi : \X \rightarrow \Y \text{ is measurable} \right\}. \label{def:Pi0(mu)}
\end{align}
\item If $\mu$ is nonatomic and $\Y$ is homeomorphic to a convex subset of a locally convex space, then  the following set is dense in $\Pi(\mu)$:
\[
\left\{\mu(dx)\delta_{\varphi(x)}(dy) \in \P(\X \times \Y) : \varphi : \X \rightarrow \Y \text{ is continuous} \right\}.
\]
\end{enumerate}
\end{proposition}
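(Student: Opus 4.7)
The plan is to realize an arbitrary $P \in \Pi(\mu)$ as a limit of Dirac Young measures $\mu(dx)\delta_{\varphi_n(x)}(dy)$ built by hand, exploiting the nonatomicity of $\mu$ to allocate $\X$-mass in prescribed proportions. First, I would disintegrate $P(dx,dy) = \mu(dx)K(x,dy)$ for a Borel probability kernel $K$. By tightness, for each $n$ I would fix a compact $L_n \subset \Y$ with $P(\X \times L_n^c) < 1/n$ and a finite Borel partition $B^n_1,\ldots,B^n_{k_n}$ of $\Y$ whose pieces meeting $L_n$ have diameter at most $1/n$, together with representatives $y^n_i \in B^n_i$. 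The Borel functions $p^n_i(x) := K(x,B^n_i)$ satisfy $\sum_i p^n_i \equiv 1$, and I would refine a countable Borel partition $\{C^n_j\}_j$ of $\X$ so that each $p^n_i$ differs from a constant $q^n_{i,j}$ by at most $1/n$ on $C^n_j$.

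Second, the nonatomicity of $\mu$ lets me cut each $C^n_j$ into Borel pieces $C^n_{j,i}$ with $\mu(C^n_{j,i}) = q^n_{i,j}\mu(C^n_j)$ (after a harmless renormalization so that the $q^n_{i,j}$ sum to one), and I set $\varphi_n(x) := y^n_i$ for $x \in C^n_{j,i}$. The resulting $P_n := \mu(dx)\delta_{\varphi_n(x)}(dy)$ lies in $\Pi_0(\mu)$. To verify $P_n \Rightarrow P$, I would test against a bounded continuous $f: \X \times \Y \to \R$ and compare
\[
\int f\,dP_n = \sum_{i,j}\int_{C^n_{j,i}} f(x,y^n_i)\,\mu(dx),\qquad \int f\,dP = \sum_{i,j}\int_{C^n_j}\int_{B^n_i} f(x,y)\,K(x,dy)\,\mu(dx),
\]
bounding the difference by (a) discarding $B^n_i$'s disjoint from $L_n$ at a cost $O(\|f\|_\infty/n)$, (b) using the diameter control on $B^n_i$ together with uniform continuity of $f$ on a compact in $\X$ absorbing almost all of $\mu$, and (c) absorbing the mismatch between $\mu(C^n_{j,i})$ and $\int_{C^n_j} p^n_i\,d\mu$, which is $O(\|f\|_\infty/n)\mu(C^n_j)$ by construction. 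Letting $n \to \infty$ yields (i).

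For (ii), I would promote each measurable $\varphi_n$ to a continuous map. Lusin's theorem for Polish-space-valued measurable functions \cite[Theorem 7.1.13]{bogachev-measuretheory} produces a closed $F_n \subset \X$ with $\mu(F_n) \ge 1 - 1/n$ on which $\varphi_n$ is continuous; because $\Y$ is homeomorphic to a convex subset of a locally convex space, Dugundji's extension theorem provides a continuous extension $\widetilde\varphi_n: \X \to \Y$ of $\varphi_n|_{F_n}$ with values in the closed convex hull of $\varphi_n(F_n) \subset \Y$. Since $\widetilde\varphi_n$ and $\varphi_n$ agree on $F_n$,
\[
\bigl\|\mu(dx)\delta_{\widetilde\varphi_n(x)}(dy) - \mu(dx)\delta_{\varphi_n(x)}(dy)\bigr\|_{\mathrm{TV}} \le 2\mu(\X \setminus F_n) \le 2/n,
\]
so these approximations also converge weakly to $P$.

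The main obstacle I expect is the bookkeeping in (i): the partition of $\X$ must simultaneously be fine enough that the $p^n_i$ are nearly constant on each piece and be subdivided (via nonatomicity) in the prescribed proportions, while tail mass outside $L_n$ is tracked. In (ii), the hypothesis that $\Y$ is a convex subset of a locally convex space is precisely what is needed for Dugundji's extension to keep the image inside $\Y$; relaxing it would require a different device, e.g.\ that $\Y$ be an absolute retract.
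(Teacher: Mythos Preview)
Your approach to part (ii) is exactly the paper's: Lusin plus Dugundji's extension to promote measurable $\varphi$ to continuous ones while keeping values in the convex set $\Y$.

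For part (i), however, there is a real gap in step (c), and the paper takes a different and cleaner route. Your step (c) controls the \emph{mass} mismatch $|\mu(C^n_{j,i}) - \int_{C^n_j} p^n_i\,d\mu| \le \mu(C^n_j)/n$, but what must actually be bounded is the \emph{integral} mismatch
\[
\Bigl|\int_{C^n_{j,i}} f(x,y^n_i)\,\mu(dx) - \int_{C^n_j} p^n_i(x) f(x,y^n_i)\,\mu(dx)\Bigr|,
\]
and these differ because $f(\cdot,y^n_i)$ varies over $C^n_j$: integrating over the sub-piece $C^n_{j,i}$ is not the same as integrating over all of $C^n_j$ and rescaling by $\mu(C^n_{j,i})/\mu(C^n_j)$. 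Moreover, even your mass bound, when summed over $i=1,\ldots,k_n$ and $j$, gives a total error of order $k_n/n$, which need not vanish. The repair is straightforward but must be built into the construction: in addition to making the $p^n_i$ nearly constant on each $C^n_j$, you must require the $C^n_j$'s (intersected with a large compact $K_n\subset\X$) to have small diameter, so that $f(\cdot,y^n_i)$ oscillates little there; and you should phrase the kernel approximation as $\ell^1$-closeness on the simplex, i.e.\ $\sum_i |p^n_i(x)-q^n_{i,j}|\le 1/n$ on $C^n_j$, so the errors sum.

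The paper avoids all of this by deducing (i) from the two-marginal result (Proposition~\ref{pr:density-two-marginals}). There one partitions only $\X$, by increasing partitions $\pi_n=\{A_{n,i}\}$ generating the Borel $\sigma$-field, and uses nonatomicity via Borel isomorphism to produce, on each piece, a measurable map $\phi_{n,i}:A_{n,i}\to\Y$ pushing $\mu|_{A_{n,i}}$ forward to the exact measure $P(A_{n,i}\times\cdot)$. Gluing these gives $P_n(dx,dy)=\mu(dx)\delta_{\phi_n(x)}(dy)$ which agrees with $P$ on $\sigma(\pi_n)\otimes\mathcal B(\Y)$, so convergence is immediate and no partition of $\Y$, no diameter control, and no Riemann-sum bookkeeping is needed. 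As a bonus, the second marginal is preserved exactly.
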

\begin{proof}
The first claim will follow from Proposition \ref{pr:density-two-marginals}. To prove the second claim, we must only show that any measurable function $\varphi : \X \rightarrow \Y$ can be obtained as the $\mu$-a.s.\ limit of continuous functions. Assume without loss of generality that $\Y$ is in fact a subset of a locally convex space $\widehat\Y$, endowed with the induced topology. By a form of Lusin's theorem \cite[Theorem 7.1.13]{bogachev-measuretheory}, for each $\epsilon>0$ we may find a compact $K_\epsilon \subset \X$ such that $\mu(K_\epsilon^c) \le \epsilon$ and the restriction $\varphi|_{K_\epsilon} : K_\epsilon \rightarrow \Y$ is continuous. Using a generalization of the Tietze extension theorem due to Dugundji \cite[Theorem 4.1]{dugundji1951extension}, we may find a continuous function $\tilde{\varphi}_\epsilon : \X \rightarrow \widehat\Y$ such that $\tilde{\varphi}_\epsilon = \varphi$ on $K_\epsilon$ and such that the range $\tilde{\varphi}_\epsilon(\X)$ is contained in the convex hull of $\varphi|_{K_\epsilon}(\X)$, which is itself contained in the convex set $\Y$. We thus view $\tilde{\varphi}_\epsilon$ as a continuous function from $\X$ to $\Y$. Since $\mu(\tilde{\varphi}_\epsilon \neq \varphi) \le \mu(K_\epsilon^c) \le \epsilon$, we may find a subsequence of $\tilde{\varphi}_\epsilon$ which converges $\mu$-a.s.\ to $\varphi$.
\end{proof}

\begin{remark}
Note that part (ii) of Proposition \ref{pr:density-one-marginal} fails in general when $\Y$ fails to be (homeomorphic to) a convex set, which is most easily seen when $\Y$ is a discrete space.
\end{remark}

\begin{remark}
The two previous results can be stated in terms of \emph{stable convergence}, a mode of convergence of probability measures on product spaces studied in detail in \cite{jacodmemin-stable}. See also \cite[Section 8.10(xi)]{bogachev-measuretheory} for more references. Even when $\X$ is merely a measurable space, one may define the \emph{stable topology} on $\P(\X \times \Y)$ as the coarsest topology such that $P \mapsto \int\varphi\,dP$ is continuous for every bounded jointly measurable function $\varphi : \X \times \Y \rightarrow \R$ such that $\varphi(x,\cdot)$ is continuous for every fixed $x \in \X$. An equivalent statement of Lemma \ref{le:stable-convergence} is that, when $\X$ is a Polish space, the topologies of weak convergence and stable convergence agree on $\Pi(\mu)$.
\end{remark}

\subsection{Two fixed marginals} \label{se:2marginals}
Throughout this section, again assume $\X$ and $\Y$ are Polish spaces. We are now given marginals on both spaces, $\mu \in \P(\X)$  and $\nu \in \P(\Y)$, and we define the set of couplings:
\begin{align}
\Pi(\mu,\nu) = \{P \in \P(\X \times \Y) : P(\cdot \times \Y) =\mu, \ P(\X \times \cdot) = \nu\}. \label{def:Pi(mu,nu)}
\end{align}
The first natural question is if a two-marginal analogue of Lemma \ref{le:stable-convergence} can hold. That is, if $P,P_n \in \Pi(\mu,\nu)$ with $P_n \rightarrow P$, and if $\varphi : \X \times \Y \rightarrow \R$ is bounded and measurable, then we might expect  $\int\varphi\,dP_n$ to converge to $\int\varphi\,dP$. It turns out that this does hold for $\varphi$ of the form $\varphi(x,y)=f(x)g(y)$ for bounded measurable functions $f$ and $g$, as is shown in Lemma \ref{le:stable-convergence-two-marginals} below (see also \cite[Lemma 2.3]{beiglbock-pratelli}). For general $\varphi$, however, the subsequent Example \ref{ex:gaussian-diagonal} illustrates what can go wrong. The following will also be crucially in the proof of Theorem \ref{th:intro:density-discrete}.

\begin{lemma} \label{le:stable-convergence-two-marginals}
Suppose $P,P_n \in \Pi(\mu,\nu)$ for $n \in \N$. Then $P_n \to P$ if and only if $\int f(x)g(y)\,P_n(dx,dy) \rightarrow \int f(x)g(y)\,P(dx,dy)$ for each bounded measurable $f : \X \rightarrow \R$ and $g : \Y \rightarrow \R$.
In particular, the topology of $\Pi(\mu,\nu)$ depends only on the Borel sets of the underlying spaces $\X$ and $\Y$, not on the topologies.
\end{lemma}
\begin{proof}
Suppose first that $P_n \to P$.
By approximating $f$ uniformly by simple functions, we may assume that $f$ is itself simple. That is,
\[
f(x) = \sum_{i=1}^ma_i1_{A_i}(x),
\]
where $m \ge 1$, $a_i \in \R$, and $(A_1,\ldots,A_m)$ is a Borel partition of $\X$.
Fix $\epsilon > 0$. Let $\|\psi\|_\infty = \sup_{y \in \Y}|\psi(y)|$ for any $\psi : \Y \rightarrow \R$. By Lusin's theorem, there is a continuous function $\varphi : \Y \rightarrow \R$ such that $\|\varphi\|_\infty \le \|g\|_\infty$ and $\nu(\varphi \neq g) \le \epsilon$. Then
\begin{align*}
&\left|\int_{A_i \times \Y}g(y)P_n(dx,dy) - \int_{A_i \times \Y}g(y)P(dx,dy)\right| \\
	&\quad\le \left|\int_{A_i \times \Y}(g(y)-\varphi(y))P_n(dx,dy) - \int_{A_i \times \Y}(g(y)-\varphi(y))P(dx,dy)\right| \\
	&\quad\quad+ \left|\int_{A_i \times \Y}\varphi(y)P_n(dx,dy) - \int_{A_i \times \Y}\varphi(y)P(dx,dy)\right|.
\end{align*}
The last term tends to zero thanks to Lemma \ref{le:stable-convergence}. The other term is bounded by
\[
\|g-\varphi\|_\infty(P_n(A_i \times \{\varphi \neq g\}) + P(A_i \times \{\varphi \neq g\})) \le 4\|g\|_\infty\epsilon,
\]
because the second marginal of each $P_n$ (and of $P$) is $\nu$. This completes the proof of the ``only if" claim. For the converse, simply note that $\{P_n\}$ is tight and that every subsequential limit $Q$ must satisfy $Q(A \times B)=P(A \times B)$ for Borel sets $A \subset \X$ and $B \subset \Y$, whence $Q=P$. 
\end{proof}

\begin{example} \label{ex:gaussian-diagonal}
Suppose $\X=\Y=\R^2$ and $\mu=\nu=N(0,1)$, where $N(0,1)$ denotes the standard Gaussian law. Fix some probability space $(\Omega,\F,\PP)$ supporting a two-dimensional standard Gaussian $X$, and define $X_n$ as the rotation of $X$ by $1/n$ radians. Both $P := \PP \circ (X,X)^{-1}$ and $P_n := \PP \circ (X,X_n)^{-1}$ belong to $\Pi(\mu,\nu)$, and $P_n \rightarrow P$. But if $\varphi(x,y) = 1_{\{x=y\}}$, then $\int\varphi\,dP_n = 0$ for all $n$ while $\int\varphi\,dP=1$.
\end{example}

Let $\Pi_0(\mu,\nu)$ denote the set of \emph{Monge} couplings, defined as 
\begin{align}
\Pi_0(\mu,\nu) = \left\{\mu(dx)\delta_{\varphi(x)}(dy) \in \Pi(\mu,\nu) : \varphi : \X \rightarrow \Y \text{ is measurable} \right\}. \label{def:Pi0(mu,nu)}
\end{align}
That is, a coupling of $(\mu,\nu)$ belongs to $\Pi_0(\mu,\nu)$ if and only if it is concentrated on the graph of a measurable function. The following well known proposition, implicit in \cite{pratelli2007equality}, shows that if $\mu$ is nonatomic then $\Pi_0(\mu,\nu)$ is dense in $\Pi(\mu,\nu)$, which gives Proposition \ref{pr:density-one-marginal} as a corollary.
Borel isomorphism is the only non-elementary ingredient in the proof.

\begin{proposition} \label{pr:density-two-marginals}
If $\mu$ is nonatomic, then $\Pi_0(\mu,\nu)$ is dense in $\Pi(\mu,\nu)$.
\end{proposition}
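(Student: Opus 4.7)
The strategy is to approximate $P$ by pushing $\mu$ through a deterministic map which, on each piece of a sufficiently fine Borel partition of $\X$, transports the normalized local restriction of $\mu$ onto the conditional second marginal of $P$ over that piece. Because $\X\times\Y$ is Polish, the weak topology on $\P(\X\times\Y)$ is metrizable, so it suffices to show: for every bounded continuous $h:\X\times\Y\to\R$ and every $\epsilon>0$ there exists $P'\in\Pi_0(\mu,\nu)$ with $|\int h\,dP' - \int h\,dP| < \epsilon$. (Handling finitely many test functions at once requires only taking the minimum of the moduli of continuity used below.)

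I begin with tightness. Pick compact $K\subset\X$ and $L\subset\Y$ with $\mu(K^c)$ and $\nu(L^c)$ both less than $\epsilon/(8(1+\|h\|_\infty))$. Since $h$ is uniformly continuous on the compact set $K\times L$, there is $\delta>0$ such that, whenever $x,x'\in K$ satisfy $d_\X(x,x')<\delta$ and $y\in L$, one has $|h(x,y)-h(x',y)|<\epsilon$. I partition $K$ into finitely many Borel sets $B_1,\ldots,B_{m-1}$ of diameter less than $\delta$ and set $B_m := K^c$, obtaining a Borel partition of $\X$.

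The key step is the piecewise pushforward. On every $B_i$ with $\mu(B_i)>0$, set $\nu_i := P(B_i\times\cdot)/\mu(B_i)\in\P(\Y)$. The normalized restriction $\mu|_{B_i}/\mu(B_i)$ is nonatomic because $\mu$ is, and the Borel isomorphism theorem---any two nonatomic Borel probability spaces on Polish spaces are measurably isomorphic---supplies a measurable $\varphi_i:B_i\to\Y$ pushing $\mu|_{B_i}/\mu(B_i)$ forward to $\nu_i$. Stitching yields a Borel $\varphi:\X\to\Y$, and I take $P' := \mu(dx)\,\delta_{\varphi(x)}(dy)$. Its second marginal is $\sum_i \mu(B_i)\nu_i = \sum_i P(B_i\times\cdot) = \nu$, so $P'\in\Pi_0(\mu,\nu)$.

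For the estimate, pick $x_i\in B_i\cap K$ for $i<m$. The pushforward property $\int_{B_i}h(x_i,\varphi(x))\,\mu(dx)=\int_{B_i\times\Y}h(x_i,y)\,P(dx,dy)$ gives
\begin{align*}
\int h\,dP' - \int h\,dP = \sum_{i<m}\Bigl(\int_{B_i}\bigl[h(x,\varphi(x))-h(x_i,\varphi(x))\bigr]\mu(dx) - \int_{B_i\times\Y}\bigl[h(x,y)-h(x_i,y)\bigr]P(dx,dy)\Bigr) + R,
\end{align*}
where $|R|\le 2\|h\|_\infty(\mu(K^c)+P(K^c\times\Y))=4\|h\|_\infty\mu(K^c)$ absorbs the $i=m$ contribution. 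For $i<m$ the first integrand is at most $\epsilon$ on $\{\varphi\in L\}$ by uniform continuity and at most $2\|h\|_\infty$ on $\{\varphi\in L^c\}$, with $\mu\{\varphi\in L^c\}=\nu(L^c)$; the $P$-integrand is controlled symmetrically via $P(\X\times L^c)=\nu(L^c)$. The total is bounded by a constant multiple of $\epsilon$, proving the claim. The principal obstacle is the third paragraph: constructing the piecewise maps $\varphi_i$ with the required pushforward property is exactly where the nonatomicity of $\mu$ enters, and it is the only ingredient that is not bookkeeping via tightness and uniform continuity.
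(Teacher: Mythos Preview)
Your proof is correct, and its core construction is the same as the paper's: partition $\X$ into Borel pieces $B_i$, on each piece push the normalized restriction $\mu|_{B_i}/\mu(B_i)$ forward to the conditional second marginal $P(B_i\times\cdot)/\mu(B_i)$ (this is where nonatomicity of $\mu$ is used, via Borel isomorphism), and patch the pieces together to get a deterministic transport map whose induced coupling lies in $\Pi_0(\mu,\nu)$.

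Where you differ from the paper is in the convergence argument. The paper takes an \emph{increasing} sequence of partitions $\pi_n$ generating the Borel $\sigma$-field, observes that the resulting $P_n$ agrees with $P$ on $\sigma(\pi_n)\otimes\B(\Y)$, and concludes $P_n\to P$ from this filtration-type statement. You instead work with a single fixed partition chosen fine enough (on a tight compact set) that uniform continuity of the test function controls the error directly. Your route is more elementary and self-contained but requires the tightness and uniform-continuity bookkeeping you carried out; the paper's route is slicker but leaves the final ``$P_n=P$ on $\sigma(\pi_n)\otimes\B(\Y)$ implies $P_n\Rightarrow P$'' step to the reader. One minor remark: your invocation of the Borel isomorphism theorem is slightly loose, since $\nu_i$ may have atoms and so is not isomorphic to a nonatomic space; what you actually need (and what the paper also uses) is the weaker fact that any Borel probability on a Polish space is the pushforward of a nonatomic one, which does follow from Borel isomorphism with $[0,1]$ and the quantile construction.
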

\begin{proof}
Fix $P \in \Pi(\mu,\nu)$. Let $\pi_n=\{A_{n,i} : i=1,\ldots,n\}$ be a sequence of partitions of $\X$ such that $\pi_n \subset \pi_{n+1}$ and $\cup_n\pi_n$ generates the Borel $\sigma$-field. Let $\eta_{|A} = \eta(A \cap \cdot)$ denote the trace of a measure $\eta$ on a set $A$. 

Define the finite measure $\widetilde{\nu}^n_i(\cdot) = P(A_{n,i} \times \cdot)$, which has total mass $\widetilde{\nu}^n_i(\Y) = P(A_{n,i} \times \Y) = \mu(A_{n,i})$. Because $\mu$ is nonatomic, there exists by Borel isomorphism\footnote{See \cite[Theorem 15.6]{kechris} for Borel isomorphism, and note that a Borel subset of a Polish space is always a standard Borel space \cite[Lemma 13.4]{kechris}.} a measurable function $\varphi_{n,i} : A_i \rightarrow \Y$ such that $\mu_{|A_{n,i}} \circ \varphi_{n,i}^{-1} = \widetilde{\nu}_i$. Now define $\varphi_n : \X \rightarrow \Y$ by setting $\varphi_n = \varphi_{n,i}$ on $A_{n,i}$ for each $i$.  We claim that $\mu \circ \varphi_n^{-1} = \nu$. Indeed, for a Borel set $B \subset \Y$,
\begin{align*}
\mu \circ \varphi_n^{-1}(B) &= \sum_{i=1}^n\mu(A_{n,i} \cap \varphi^{-1}(B)) = \sum_{i=1}^n\mu_{|A_{n,i}}(\varphi_{n,i}^{-1}(B)) = \sum_{i=1}^n\widetilde{\nu}^n_i(B) \\
	&= \sum_{i=1}^nP(A_{n,i} \times B) = P(\X \times B) = \nu(B).
\end{align*}
Now define $P_n(dx,dy) = \mu(dx)\delta_{\varphi_n(x)}(dy)$. By construction, we have $P_n \in \Pi(\mu,\nu)$ and
\begin{align*}
\int_{A_{n,i} \times \Y}f(y)\,P_n(dx,dy) = \int_Ff\,d\widetilde{\nu}^n_i = \int_{A_{n,i} \times \Y}f(y)\,P(dx,dy)
\end{align*}
for every $i=1,\ldots,n$ and every bounded measurable $f : \Y \rightarrow \R$. Hence, $P_n = P$ on $\sigma(\pi_n) \otimes \B(\Y)$, where $\B(\Y)$ denotes the Borel $\sigma$-field of $\Y$. Because $\pi_n \subset \pi_{n+1}$ for each $n$ and $\sigma(\cup_n\pi_n)=\B(\X)$, it is straightforward to conclude that $P_n \rightarrow P$.
\end{proof}

\begin{remark} \label{re:bimeasurable}
In Proposition \ref{pr:density-two-marginals}, when both marginals are nonatomic, the result can be refined so that the approximations are \emph{bimeasurable}. Precisely, suppose we are given a pair of nonatomic random variables $X$ and $Y$ with values in Polish spaces $\X$ and $\Y$. Then there exists a sequence $(X_n,Y_n)$ of $\X \times \Y$-valued random variables such that $X_n$ is $Y_n$-measurable, $Y_n$ is $X_n$-measurable, $X_n \stackrel{d}{=} X$, $Y_n \stackrel{d}{=} Y$, and $(X_n,Y_n) \Rightarrow (X,Y)$. See \cite[Proposition A.3]{gangbo1999monge} or \cite[pp. 296]{emery2005certain} for proofs in the case where $X$ and $Y$ are uniform in $[0,1]$, which extends to general spaces by Borel isomorphism and Lemma \ref{le:stable-convergence-two-marginals}.
\end{remark}

\begin{remark}
Results closely related to Proposition \ref{pr:density-two-marginals} are well known in the theory of copulas, mainly for $\X=\Y=[0,1]$ and when $\mu=\nu$ are both the uniform (Lebesgue) measure. In this case, a dense subset of $\Pi(\mu,\nu)$ is given by the set of Monge couplings induced by piecewise linear bijections with slope 1, known as \emph{shuffles of min} \cite{mikusinski1992shuffles}.
\end{remark}

\begin{remark}
Combining Proposition \ref{pr:density-two-marginals} with a remarkable result of Oxtoby \cite[Theorem 1]{oxtoby1973approximation} yields the following: Suppose $\X$ is a connected $n$-dimensional manifold for some $n \ge 2$, and suppose $\mu \in \P(\X)$ is nonatomic and charges every nonempty open set. Then a dense subset of $\Pi(\mu,\mu)$ is given by the set of measures of the form $\mu(dx)\delta_{\varphi(x)}(dy)$, where $\varphi : \X \rightarrow \X$ is a homeomorphism with $\mu \circ \varphi^{-1} = \mu$.
\end{remark}

\subsection{Optimal transport} \label{se:optimaltransport}

We state here two natural applications of the results of the previous section, both contained in the following Proposition, which are modest extensions of known results.
The first deals with attainment of the infimum in optimal transport problems, while the second shows that the Kantorovich transport problem is often a genuine relaxation of the Monge problem, in the sense that the infima agree.  A different version of the latter result was shown in \cite[Theorem B]{pratelli2007equality}, generalizing \cite[Theorem 2.1]{ambrosio2003lecture}. Our continuity requirements are weaker than in \cite[Theorem B]{pratelli2007equality}, but our integrability requirements are stronger.

\begin{proposition} \label{pr:transport-attained}
Suppose $\X$ and $\Y$ are Polish spaces and $\mu \in \P(\X)$, $\nu \in \P(\Y)$. Suppose $\Psi : \X \times \Y \rightarrow \R \cup\{\infty\}$ is of the form
\[
\Psi(x,y) = \psi(g(x),h(y)),
\]
for some Borel measurable functions $g : \X \to \widetilde\X$, $h : \Y \to \widetilde\Y$, and $\psi : \widetilde\X \times \widetilde\Y \to \R \cup\{\infty\}$, where $\widetilde\X$ and $\widetilde\Y$ are Polish spaces. Assume $\psi$ is bounded from below and lower semicontinuous in one of its variables; that is, either $\psi(x,\cdot)$ is lower semicontinuous for each $x$ or $\psi(\cdot,y)$ is lower semicontinuous for each $y$. Then the infimum $\inf_{P \in \Pi(\mu,\nu)}\int \Psi\,dP$ is attained. If, in addition it holds that
\begin{enumerate}[(i)]
\item $\mu$ is nonatomic,
\item $\psi$ jointly continuous and finite-valued, and 
\item $|\Psi(x,y)| \le a(x)+b(y)$ for some $a \in L^1(\mu)$ and $b \in L^1(\nu)$,
\end{enumerate}
then 
\begin{align}
\inf_{P \in \Pi(\mu,\nu)}\int \Psi\,dP = \inf_{P \in \Pi_0(\mu,\nu)}\int \Psi\,dP. \label{eq:monge=kantorovich}
\end{align}
\end{proposition}
\begin{proof}
The map $\Pi(\mu,\nu) \to \Pi(\mu\circ g^{-1},\nu \circ h^{-1})$ given by $P \mapsto P \circ (g,h)^{-1}$ is continuous by Lemma \ref{le:stable-convergence-two-marginals}, where we understand $(g,h)$ to mean the function $(x,y) \mapsto (g(x),h(y))$. Moreover, the map $\Pi(\mu\circ g^{-1},\nu \circ h^{-1}) \ni P \mapsto \int \psi\,dP$ is lower semicontinuous by (a simple extension of) Lemma \ref{le:stable-convergence}.
Since also $\Pi(\mu,\nu)$ is weakly compact and, we deduce the first claim, that the infimum $\inf_{P \in \Pi(\mu,\nu)}\int \Psi\,dP$ is attained.

To prove the second claim, let $P \in \Pi(\mu,\nu)$. By Proposition \ref{pr:density-two-marginals}, we can find $P_n \in \Pi_0(\mu,\nu)$ with $P_n \to P$. 
As above, Lemma \ref{le:stable-convergence-two-marginals} and continuity of $\psi$ ensure that the map $P \mapsto P \circ \Psi^{-1}$ is continuous from $\Pi(\mu,\nu)$ to $\P(\R)$. The assumption that $|\Psi(x,y)| \le a(x)+b(y)$  with $\int a\,d\mu$ and $\int b\,d\nu$ finite ensures the uniform integrability
\begin{align*}
\lim_{r\to\infty}\sup_n \int_{\{|\Psi| \ge r\}} \psi \, dP_n = 0.
\end{align*}
It follows easily (e.g., by \cite[Lemma 4.11]{kallenberg-foundations}) that $\int \psi\,dP_n \to \int \psi\,dP$, and we deduce \eqref{eq:monge=kantorovich}.
\end{proof}

\begin{proof}[Proof of Proposition \ref{pr:causal-transport-attained}]
The proof of Proposition \ref{pr:causal-transport-attained} is identical to that of Proposition \ref{pr:transport-attained}, except using Theorem \ref{th:intro:density-discrete} in place of Proposition \ref{pr:density-two-marginals}.
\end{proof}

\section{Discrete time, with one fixed marginal} \label{se:dynamic}

We now begin extending the results of the previous section to the dynamic setting. We warm up with a result which is, for the most part, weaker than Theorem \ref{th:intro:density-discrete}, in the sense that the $X$-marginal cannot be fixed and $\X$ is required to be convex, but as a tradeoff we can take the approximations $X^{(k)}$ to be \emph{continuous} functions of $Y$. In addition, the proof is much shorter and more transparent, following a natural induction; convexity of $\X$ is used in order to obtain \emph{continuous} approximations at each time step (using Proposition \ref{pr:density-one-marginal}), and this continuity crucially allows each step in the induction to respect the approximation from the previous step. The first appearance of a result of this nature seems to be \cite[Lemma 3.11]{carmonadelaruelacker-mfgcommonnoise}, which is in continuous time but implicitly contains the discrete time result, which is also proven in more generality in \cite[Proposition 6.2]{carmonadelaruelacker-mfgoftiming}.

In the following, for a discrete-time stochastic process $X=(X_1,\ldots,X_N)$ we write $\FF^X=(\F^X_n)_{n=1,\ldots,N}$ for the corresponding natural filtration, namely $\F^X_n = \sigma(X_1,\ldots,X_n)$. Recall that another process $Y=(Y_1,\ldots,Y_n)$ is said to be \emph{adapted to $\FF^X$}, or simply \emph{$X$-adapted}, if $Y_n$ is $\F^X_n$-measurable for every $n$.

\begin{theorem} \label{th:density-discrete}
Consider two stochastic processes $Y = (Y_1,\ldots,Y_N)$ and $X=(X_1,\ldots,X_N)$ with values in Polish spaces $\Y$ and $\X$, respectively, where $\X$ is homeomorphic to a convex subset of a locally convex space. Suppose that the law of $Y_1$ is nonatomic. Then the following are equivalent:
\begin{enumerate}[(i)]
\item For each $n$, $\F^X_n$ is conditionally independent of $\F^Y_N$ given $\F^Y_n$.
\item Then there exists a sequence $X^{(k)}=(X^{(k)}_1,\ldots,X^{(k)}_N)$ of $(\F^Y_n)_{n=1}^N$-adapted processes such that $(Y,X^{(k)}) \Rightarrow (Y,X)$ in $\Y^N \times \X^N$. 
\item Then there exists a sequence $X^{(k)}=(X^{(k)}_1,\ldots,X^{(k)}_N)$ of $(\F^Y_n)_{n=1}^N$-adapted processes such that $(Y,X^{(k)}) \Rightarrow (Y,X)$ in $\Y^N \times \X^N$, and $X^{(k)}$ is of the form $X^{(k)}_n = f^k_n(Y_1,\ldots,Y_n)$ for some \emph{continuous} functions $f^k_n : \Y^n \rightarrow \X$.
\end{enumerate}
\end{theorem}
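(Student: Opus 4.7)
The plan is to prove the cycle (iii) $\Rightarrow$ (ii) $\Rightarrow$ (i) $\Rightarrow$ (iii). The first implication is immediate; the second follows from a closure argument for compatibility under weak limits; the third, the main content, I would handle by induction on $N$, powered by Proposition~\ref{pr:density-one-marginal}(ii) and Lemma~\ref{le:stable-convergence}.

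For the base case $N=1$ of (i) $\Rightarrow$ (iii): since $\mathrm{Law}(Y_1)$ is nonatomic and $\X$ is homeomorphic to a convex subset of an LCS, Proposition~\ref{pr:density-one-marginal}(ii) applied to $\mathrm{Law}(Y_1,X_1)$ yields continuous $f^k:\Y\to\X$ with $(Y_1,f^k(Y_1))\Rightarrow(Y_1,X_1)$. To lift this to convergence in $\Z\times\X$, fix bounded continuous $\phi:\X\to\R$ and $h:\Z\to\R$, and set $\bar h(y):=\E[h(Z)\mid Y_1=y]$. Since $f^k(Y_1)$ is $Y_1$-measurable,
\[
\E[\phi(f^k(Y_1))\,h(Z)]=\E[\phi(f^k(Y_1))\,\bar h(Y_1)]\longrightarrow\E[\phi(X_1)\,\bar h(Y_1)]=\E[\phi(X_1)\,h(Z)],
\]
where the middle convergence uses Lemma~\ref{le:stable-convergence} with integrand $(y,x)\mapsto\phi(x)\bar h(y)$ (bounded, continuous in $x$, measurable in $y$), and the last equality uses compatibility $X_1\perp Z\mid Y_1$. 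Hence $(Z,f^k(Y_1))\Rightarrow(Z,X_1)$.

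The proof of (ii) $\Rightarrow$ (i) rests on the same computation: every $\FF^Y$-adapted $X^{(k)}$ trivially satisfies $\F^{X^{(k)}}_n\subseteq\F^Y_n$, and compatibility of $(X,Y,Z)$ at level $n$ is equivalent to the identity
\[
\E[\phi(X_{1:n})\,h(Z)]=\E[\phi(X_{1:n})\,\bar h(Y_{1:n})]
\]
for all bounded continuous $\phi:\X^n\to\R$ and $h:\Z\to\R$, where $\bar h(y):=\E[h(Z)\mid Y_{1:n}=y]$ depends only on $\mathrm{Law}(Z)$ and so is common along the sequence. Writing $Y_{1:n}=g_n(Z)$ with $g_n$ Borel, the right-hand side is an integral against $\mathrm{Law}(Z,X^{(k)})$ of a bounded integrand continuous in $x$ and measurable in $z$, so Lemma~\ref{le:stable-convergence} propagates this identity from each $(Z,X^{(k)})$ to $(Z,X)$; a monotone class argument then upgrades it to the full conditional independence $\F^X_n\perp\sigma(Z)\mid\F^Y_n$.

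For the inductive step of (i) $\Rightarrow$ (iii): the stepwise compatibility yields a disintegration $\PP(X_{1:N}\in\cdot\mid\F^Y_N)=\prod_{n=1}^N K_n(y_{1:n},x_{1:n-1},dx_n)$, where each kernel $K_n$ is measurable only in its stated arguments. Construct the $f^k_n$ recursively in $n$: obtain $f^k_1$ from Proposition~\ref{pr:density-one-marginal}(ii) applied to $\mu_{Y_1}(dy_1)K_1(y_1,dx_1)$; for $n\ge 2$, substitute the previously chosen $X^{(k)}_{1:n-1}=(f^k_1(Y_1),\ldots,f^k_{n-1}(Y_{1:n-1}))$ into $K_n$ to form a $y_{1:n}$-indexed kernel $\widetilde K^k_n(y_{1:n},dx_n)$, then apply Proposition~\ref{pr:density-one-marginal}(ii) again (noting $\mathrm{Law}(Y_{1:n})$ inherits nonatomicity from its first marginal) to produce continuous $f^k_n:\Y^n\to\X$ with $\delta_{f^k_n(y_{1:n})}$ approximating $\widetilde K^k_n(y_{1:n},\cdot)$ in the weak topology. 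The resulting convergence $(Z,X^{(k)})\Rightarrow(Z,X)$ should follow by iterating the Lemma~\ref{le:stable-convergence}/compatibility calculation of the base case, now conditioning on $\F^Y_N$ rather than $\F^Y_1$. The main obstacle is controlling the compounding approximation error: each $\widetilde K^k_n$ depends on the previously constructed approximations, so joint weak convergence in the limit demands a diagonal argument---for instance, fixing a metric for weak convergence on each $\P(\Y^n\times\X)$ and refining $f^k_n$ to within tolerance $1/(Nk)$---together with a verification that substituting $X^{(k)}_{1:n-1}$ for $X_{1:n-1}$ in later kernels does not destroy convergence. This is made tractable by the observation that at every stage the approximations are $\FF^Y$-measurable, so the relevant first marginal remains $\mathrm{Law}(Y_{1:n})$, which stays nonatomic throughout.
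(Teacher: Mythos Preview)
Your base case and the implication (ii) $\Rightarrow$ (i) are correct and essentially identical to the paper's argument. The gap lies in the inductive step of (i) $\Rightarrow$ (iii), precisely at the point you yourself flag as ``the main obstacle.''

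Substituting $X^{(k)}_{1:n-1}$ into the measurable kernel $K_n$ does not, in general, preserve weak convergence. Concretely, suppose by induction you have $(Z,X^{(k)}_{1:n-1}) \Rightarrow (Z,X_{1:n-1})$, hence $(Y_{1:n},X^{(k)}_{1:n-1}) \Rightarrow (Y_{1:n},X_{1:n-1})$. To show that the law built from $\widetilde K^k_n$ is close to the target you must pass to the limit in
\[
\int \psi(y_{1:n},x_{1:n-1})\,\mu_{Y_{1:n}}(dy)\,\delta_{f^k_{1:n-1}(y)}(dx_{1:n-1}),
\qquad
\psi(y,x_{1:n-1}) := \int \phi(y,x_{1:n})\,K_n(y,x_{1:n-1},dx_n),
\]
for continuous test functions $\phi$. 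But $\psi$ is merely \emph{measurable} in $x_{1:n-1}$, so Lemma~\ref{le:stable-convergence} is unavailable and the limit need not hold. Your closing observation that the first marginal stays $\mathrm{Law}(Y_{1:n})$ is true but irrelevant to this obstruction; the trouble is continuity in the $x$-slot, not nonatomicity in the $y$-slot.

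The paper avoids this by reversing your order of operations. Rather than substituting into the kernel and then approximating, it first applies Proposition~\ref{pr:density-one-marginal}(ii) to the pair $\bigl((Y^{n+1},X^n),\,X_{n+1}\bigr)$ --- the first coordinate has nonatomic law because $Y_1$ does --- to produce \emph{continuous} maps $\widehat g^j:\Y^{n+1}\times\X^n \to \X$ with $(Y^{n+1},X^n,\widehat g^j(Y^{n+1},X^n)) \Rightarrow (Y^{n+1},X^n,X_{n+1})$. Only then does one substitute the inductively constructed $X^{(i)}_{1:n}$ for $X^n$ inside $\widehat g^j$; because $\widehat g^j$ is continuous, the continuous mapping theorem (together with Lemma~\ref{le:stable-convergence} to carry $Z$ along) yields
\[
\bigl(Z,\,X^{(i)}_{1:n},\,\widehat g^j(Y^{n+1},X^{(i)}_{1:n})\bigr) \Rightarrow_i \bigl(Z,\,X^n,\,\widehat g^j(Y^{n+1},X^n)\bigr),
\]
and a diagonal extraction in $(i,j)$ finishes the step. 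In short: approximate \emph{before} substituting, so that the object you substitute into is continuous rather than merely measurable.
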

\begin{proof}
We begin by proving that (i) implies (ii), as this is the more challenging step.
The proof is an inductive application of Proposition \ref{pr:density-one-marginal}(ii). First, use Proposition \ref{pr:density-one-marginal} to find a sequence of continuous functions $h^j_1 : \Y \rightarrow \X$ such that $(Y_1,h^j_1(Y_1)) \Rightarrow (Y_1,X_1)$ as $j\rightarrow\infty$. Let us show that in fact $(Y,h^j_1(Y_1)) \Rightarrow (Y,X_1)$.
Let $\varphi : \Y^N \rightarrow \R$ be bounded and measurable, and let $\psi : \X \rightarrow \R$ be continuous. Use Lemma \ref{le:stable-convergence} as well as the conditional independence of $Y$ and $X_1$ given $Y_1$ to get
\begin{align*}
\lim_{j\rightarrow\infty}\E[\varphi(Y)\psi(h^j_1(Y_1))] &= \lim_{j\rightarrow\infty}\E\left[\E\left[\left.\varphi(Y)\right| Y_1\right]\psi(h^j_1(Y_1))\right]	\\
	&= \E\left[\E\left[\left.\varphi(Y)\right| Y_1\right]\psi(X_1)\right] \\
	&= \E\left[\E\left[\left.\varphi(Y)\right| Y_1,X_1\right]\psi(X_1)\right] \\
	&= \E\left[\varphi(Y)\psi(X_1)\right]
\end{align*}
This is enough to show that $(Y,h^j_1(Y_1)) \Rightarrow (Y,X_1)$ (see, e.g., \cite[Proposition 3.4.6(b)]{ethier-kurtz}).

We proceed inductively as follows. Abbreviate $Y^n := (Y_1,\ldots,Y_n)$ and $X^n := (X_1,\ldots,X_n)$ for each $n=1,\ldots,N$, noting $Y^N=Y$. Suppose we are given $1 \le n < N$ and continuous functions  $g^j_k : \Y^k \rightarrow \X$, for $k \in \{1,\ldots,n\}$ and $j \ge 1$, satisfying
\begin{align}
\lim_{j\rightarrow\infty}(Y,g^j_1(Y^1),\ldots,g^j_n(Y^n)) = (Y,X_1,\ldots,X_n), \label{pf:density1}
\end{align}
where the convergence here and throughout the proof is in law.
We will show that there exist continuous functions $h^i_k : \Y^k \rightarrow \X$ for each $k \in \{1,\ldots,n+1\}$ and $i \ge 1$ such that 
\begin{align}
\lim_{i\rightarrow\infty}(Y,h^i_1(Y^1),\ldots,h^i_{n+1}(Y^{n+1})) = (Y,X_1,\ldots,X_{n+1}). \label{pf:density2}
\end{align}
By Proposition \ref{pr:density-one-marginal} there exists a sequence of continuous functions $\widehat{g}^j : (\Y^{n+1} \times \X^n) \rightarrow \X$ such that
\begin{align*}
\lim_{j\rightarrow\infty}(Y^{n+1},X_1,\ldots,X_n,\widehat{g}^j(Y^{n+1},X^n)) = (Y^{n+1},X_1,\ldots,X_n,X_{n+1}).
\end{align*}
Note that $Y$ and $(Y^n,X^n)$ are conditionally independent given $Y_1$. Using the same argument as above, it follows that in fact
\begin{align}
\lim_{j\rightarrow\infty}(Y,X_1,\ldots,X_n,\widehat{g}^j(Y^{n+1},X^n)) = (Y,X_1,\ldots,X_n,X_{n+1}). \label{pf:density3}
\end{align}
By continuity of $\widehat{g}^j$, the limit \eqref{pf:density1} implies (using Lemma \ref{le:stable-convergence} and the fact that $Y^{n+1}$ is $Y$-measurable) that, for each $j$,
\begin{align}
\lim_{i\rightarrow\infty}&(Y,g^i_1(Y^1),\ldots,g^i_n(Y^n),\widehat{g}^j(Y^{n+1},g^i_1(Y^1),\ldots,g^i_n(Y^n))) \nonumber \\
	&= (Y,X_1,\ldots,X_n,\widehat{g}^j(Y^{n+1},X_1,\ldots,X_n)). \label{pf:density4}
\end{align}
Combining the two limits \eqref{pf:density3} and \eqref{pf:density4}, we may find a subsequence $j_i$ such that 
\begin{align*}
\lim_{i\rightarrow\infty}&(Y,g^{j_i}_1(Y^1),\ldots,g^{j_i}_n(Y^n),\widehat{g}^i(Y^{n+1},g^{j_i}_1(Y^1),\ldots,g^{j_i}_n(Y^n))) \\
	&= (Y,X_1,\ldots,X_n,X_{n+1}).
\end{align*}
Define $h^i_k := h^{j_i}_k$ for $k=1,\ldots,n$ and $h^i_{n+1}(Y^{n+1}) := \widehat{g}^i(Y^{n+1},g^{j_i}_1(Y^1),\ldots,g^{j_i}_n(Y^n))$ 
to establish \eqref{pf:density2}. This completes the proof that (i) implies (iii).

Clearly (iii) implies (ii).
To prove (ii) implies (i), note first that (i) holds if and only if for every $n$ and every bounded measurable functions $f_n$, $h_n$, and $g$ on $\X^n$, $\Y^n$, and $\Y^N$, respectively, we have 
\begin{align}
\E[f_n(X^n)h_n(Y^n)(g(Y) - g_n(Y^n))] &= 0, \label{pf:compat-constraint} \\
\text{ where } g_n(Y^n) &:= \E[g(Y) | \F^Y_n]. \nonumber
\end{align}
In fact, by a routine approximation, it is enough that this holds for \emph{continuous} $f_n$. Hence, if a sequence of processes $X^{(k)}=(X^{(k)}_1,\ldots,X^{(k)}_N)$ satisfies \eqref{pf:compat-constraint} as well as $(X^{(k)},Y) \Rightarrow (X,Y)$ for some process $X$, then using Lemma \ref{le:stable-convergence} we conclude that $X$ must satisfy \eqref{pf:compat-constraint} as well. To conclude that (ii) implies (i), simply note that a $Y$-adapted process $X$ trivially verifies \eqref{pf:compat-constraint}.
\end{proof}

\section{Discrete time, with two fixed marginals} \label{se:discretetime-twomarg}

In this section we prove Theorem \ref{th:intro:density-discrete}. We find it convenient to work with a more measure-theoretic notation. Let $\Y_0,\ldots,\Y_N$ and $\X_0,\ldots,\X_N$ be Polish spaces, and let $\Y=\Y_0 \times \cdots \times \Y_N$ and $\X = \X_0 \times \cdots \times \X_N$. For probability measures $\nu \in \P(\Y)$ and $\mu \in \P(\X)$, we define the sets $\Pi^c(\nu,\mu)$ and $\Pi^c_0(\nu,\mu)$ of  causal couplings of Kantorovich and Monge type. Precisely, $\Pi^c(\nu,\mu)$ is the set of laws of $\Y \times \X$-valued random variables $(Y,X)$, where $Y=(Y_0,\ldots,Y_N) \sim \nu$, $X=(X_0,\ldots,X_N) \sim \mu$, and $(X_0,\ldots,X_n)$ is conditionally independent of $Y$ given $(Y_0,\ldots,Y_n)$, for each $n=1,\ldots,N$. 
Similarly, $\Pi^c_0(\nu,\mu)$ is the set of laws of $\Y \times \X$-valued random variables $(Y,X)$, where $Y=(Y_0,\ldots,Y_N) \sim \nu$, $X=(X_0,\ldots,X_N) \sim \mu$, and $X$ is adapted to the filtration generated by $Y$.

Note that we allow here that the spaces $\Y_0,\ldots,\Y_N$ and $\X_0,\ldots,\X_N$ be distinct, which may appear somewhat more general than Theorem \ref{th:intro:density-discrete}, but it is in fact equivalent, as can be argued via disjoint unions. In addition, we number here from $0$ to $N$ rather than $1$ to $N$, for no reason other than convenience.

We have already seen in Theorem \ref{th:density-discrete} that $\Pi^c(\nu,\mu)$ is a closed set, and we must only prove that $\Pi^c_0(\nu,\mu)$ is dense in $\Pi^c(\nu,\mu)$. We accomplish this via the following strategy:
\begin{enumerate}
\item First, we prove the theorem under the additional hypothesis that $Y_0$ is independent of $(Y_1,\ldots,Y_N)$, i.e., $\nu$ is of the form 
\begin{align}
\nu = \nu_0 \times \nu_{+}, \qquad \text{for some } \nu_0 \in \P(\Y_0) \text{ and } \nu_{+} \in \P(\Y_1 \times \cdots \times \Y_N). \label{productform}
\end{align}
\item Second, we prove the case where $\nu$ is a convex combination of finitely many measures of the form \eqref{productform}.
\item Assuming next that $\Y_1 = \cdots = \Y_N = \{1,\ldots,m\}$ for some $m \in \N$, we prove the claim without any additional structural hypotheses.
\item Finally, we remove the restriction that $\Y_1,\ldots,\Y_N$ are finite, arriving at the fully general case.
\end{enumerate}

\subsection{Step 1: The independent case}

We begin with two preparatory lemmas which will illustrate the utility of the assumption that $Y_0$ is independent of $(Y_1,\ldots,Y_N)$.
Throughout this section, we write $\lambda$ for Lebesgue measure on $[0,1]$. For a measure $\mu$ and a measurable set $A$ we write $\mu_{|A}$ for the trace on $A$, defined by $\mu_{|A}(\cdot) := \mu(A \cap \cdot)$.

\begin{lemma} \label{le:random-generator}
Let $X_0$ and $Y_0$ be random variables taking values in Polish spaces $\X_0$ and $\Y_0=[0,1]$, respectively. Assume $Y_0 \sim \lambda$. For each $n \in \N$, there exist measurable functions $T=(T_1,T_2) : \Y_0 \to \Y_0 \times \X_0$ and $u : \Y_0 \to [0,1]$ such that:
\begin{enumerate}
\item $u(Y_0)$ is independent of $T(Y_0)$.
\item $u(Y_0) \sim \lambda$.
\item $T(Y_0) \stackrel{d}{=} (Y_0,X_0)$.
\item $|y-T_1(y)| \le 1/n$ for all $y \in \Y_0=[0,1]$.
\end{enumerate}
\end{lemma}
\begin{proof}
Fix $n \in \N$, let $\lambda$ and $\gamma$ denote the laws of $Y_0$ and $(Y_0,X_0)$, respectively. For positive integers $k < n$ let $I_k = [(k-1)/n,k/n)$, and set $I_n = [(n-1)/n,1]$. Since $\lambda$ is nonatomic, for each $k=1,\ldots,n$ there exists a measurable bijection $\varphi_k : I_k \to I_k \times \X_0 \times [0,1]$ such that $\lambda_{|I_k} \circ \varphi_k^{-1} = \gamma_{|I_k \times \X_0} \times \lambda$. Define $\psi : \Y_0 \to \Y_0 \times \X_0 \times [0,1]$ by setting $\psi(y) := \varphi_k(y)$ for $y \in I_k$. Then
\begin{align*}
\lambda \circ \psi^{-1} = \sum_{k=1}^n\gamma_{|I_k \times \X_0} \times \lambda = \gamma \times \lambda.
\end{align*}
Moreover, if writing $\psi(y)=(T_1(y),T_2(y),u(y))$, we see that $|T_1(y)-y| \le 1/n$ for all $y \in [0,1]$.
\end{proof}

We next need a somewhat non-standard dynamic version of a \emph{transfer principle}. We omit the proof, as it is nearly identical to that of Lemma \ref{le:transfer-dynamic} given in Section \ref{se:transfer} below.

\begin{lemma} \label{le:transfer-dynamic-special}
Suppose $X=(X_n)_{n=0}^N$ and $Y=(Y_n)_{n=0}^N$ are stochastic processes with values in Polish spaces $\X$ and $\Y$. Suppose $X_n$ is conditionally independent of $\F^Y_N$ given $\F^Y_n$, for every $n\geq 0$, then there exist measurable functions $f_n : \X \times \Y^n \times [0,1] \rightarrow \X$ and (perhaps on an extension of the probability space) an independent uniform random variable $U$ such that, if $Z = (Z_0, Z_1,\ldots,Z_N)$ where
\begin{align}\label{eq:transfer-def}
Z_0=X_0,\quad Z_n = f_n(X_0,Y_0, Y_1,\ldots,Y_n,U), \quad \text{ for } n =1,\ldots,N,
\end{align}
then $(Z,Y)  \stackrel{d}{=} (X,Y)$.
\end{lemma}

We are now ready to state and prove a special case of Theorem \ref{th:intro:density-discrete}, which will aid in our proof of the fully general version. In fact, under the additional assumption here that $Y_0$ is independent of $(Y_1,\ldots,Y_N)$, we are able to construct the desired approximating processes $X^k$ along with an additional  $Y_0$-measurable uniform random variables independent of $X^k$, which will be useful later on.

\begin{proposition} \label{pr:density-discrete-specialcase1}
Consider two random variables $Y = (Y_0,\ldots,Y_N)$ and $X=(X_0,\ldots,X_N)$ with values in the Polish spaces $\Y=\Y_0\times\cdots\times\Y_n$ and $\X=\X_0\times\cdots\times\X_n$, respectively. Let $\FF^Y=(\F^Y_n)_{n=0}^N$ and $\FF^X=(\F^X_n)_{n=0}^N$ denote the filtrations generated by these processes. Suppose that $Y_0$ is independent of $(Y_1,\ldots,Y_N)$ and that the law of $Y_0$ is nonatomic. 
Finally, suppose  $X$ is compatible with $Y$ in the sense that $\F^X_n$ is conditionally independent of $\F^Y_N$ given $\F^Y_n$, for each $n=0,\ldots,N$.
Then there exist $\FF^Y$-adapted processes $X^k=(X^k_0,X^{k}_1,\ldots,X^{k}_N)$ and measurable functions $u^k : \Y_0 \to [0,1]$ such that $(Y,X^{k}) \Rightarrow (Y,X)$ in $\Y \times \X$, $X\stackrel{d}{=}X^k$ for each $k$, and $u^k(Y_0) \sim \lambda$ is independent of $X^k$ for each $k$.
\end{proposition}
\begin{proof}
By Borel isomorphism and Lemma \ref{le:stable-convergence-two-marginals}, we may assume that $\Y=[0,1]$ and $Y_0 \sim \lambda$. 
For each $k\in \N$, construct functions $T^k=(T^k_1,T^k_2) : \Y_0 \to \Y_0 \times \X_0$, $u^k : \Y_0 \to [0,1]$ as in Lemma \ref{le:random-generator}. Construct an independent uniform random variable $U$ and define $Z=(Z_0,\ldots,Z_N)$ and $Z^k=(Z^k_0,\ldots,Z^k_N)$ via
\begin{align*}
Z_0&=X_0,\quad &  &Z_n= f_n(X_0,Y_0, Y_1,\ldots,Y_n,U)  \\
Z^k_0&=T^k_2(Y_0),\quad & &Z_n^k = f_n(T^k_2(Y_0),T^k_1(Y_0), Y_1,\ldots,Y_n,u^k(Y_0)), \quad n=1,\ldots,N.
\end{align*}
That is, the process $Z$ is defined exactly as in Lemma \ref{le:transfer-dynamic} with $(Z,Y) \stackrel{d}{=} (X,Y)$, whereas $Z'$  mimics its behavior based on the functions defined in Lemma \ref{le:random-generator}. 
Indeed, since $X_0$ is conditionally independent of $Y_1, \ldots, Y_n$ given $Y_0$, and since $Y_0$ is assumed to be independent of $Y_1, \ldots, Y_n$, the pair $(X_0, Y_0)$ is independent of $Y_1, \ldots, Y_n$. We thus obtain 
\[
(X_0, Y_0, Y_1, \ldots, Y_n,U)\stackrel{d}{=}
(T^k_2(Y_0), T^k_1(Y_0), Y_1, \ldots, Y_n, u^k(Y_0)),
\]
which implies $Z^k \stackrel{d}{=} Z$, as well as
\[
(Z^k, T^k_1(Y_0), Y_1, \ldots, Y_n) \stackrel{d}{=} (Z,Y) \stackrel{d}{=} (X,Y).
\]
Finally, since $|y-T^k_1(y)| \le 1/k$, we have $(Z^k, Y) \Rightarrow (X,Y)$, completing the proof.
\end{proof}

\subsection{Step 2: Finite combinations of the independent case}

\begin{proposition} \label{pr:density-discrete-specialcase2}
Adopt the setting and notation of Proposition \ref{pr:density-discrete-specialcase1}, but instead of assuming that $Y_0$ is independent of $(Y_1,\ldots,Y_N)$, we assume that the law $\nu$ of $(Y_0,\ldots,Y_N)$ is of the form
\begin{align*}
\nu = \sum_{i=1}^k a_i\,\nu^i_0 \times \nu^i_+,
\end{align*}
where $k \in \N$, $\nu^1_0,\ldots,\nu^k_0 \in \P(\Y_0)$ have disjoint supports, $\nu^i_+,\ldots,\nu^k_+ \in \P(\Y_1\times\cdots\times\Y_N)$, and $a_i \ge 0$ with $\sum_{i=1}^ka_i=1$.
Then the conclusion of Proposition \ref{pr:density-discrete-specialcase1} remains valid.
\end{proposition}
\begin{proof}
We simply apply Proposition \ref{pr:density-discrete-specialcase1} for each $i=1,\ldots,k$, and then assemble the resulting approximations (on the disjoint supports of $\nu^i_0$) into a single one.
\end{proof}

\subsection{Step 3: Finite state space, without independence}

\begin{proposition} \label{pr:density-discrete-specialcase3}
Adopt the setting and notation of Proposition \ref{pr:density-discrete-specialcase1}, but instead of assuming that $Y_0$ is independent of $(Y_1,\ldots,Y_N)$, we assume that 
\begin{align*}
\Y_1=\Y_2=\ldots=\Y_N = \{1,\ldots,m\}^N, \quad m \in \N,
\end{align*}
and also that $\Y_0=[0,1]$ with $Y_0 \sim \lambda$.
Then $\Pi^c_0(\nu,\mu)$ is dense in $\Pi^c(\nu,\mu)$.
\end{proposition}
\begin{proof}
Let $\nu$ and $\mu$ denote the laws of $Y=(Y_0,\ldots,Y_N)$ and $X=(X_0,\ldots,X_N)$, respectively, and let $\gamma$ denote their joint law. The idea is to approximate $\nu$ (and thus $\gamma$ as well) by a measure satisfying the structural condition of Proposition \ref{pr:density-discrete-specialcase2}. This approximation alters the desired marginals, but we use the independent uniforms $u^k(Y_0)$ to fix this.

Let $\eta > 1$, which we will later send to $1$. Noting that $\P(\{1,\ldots,m\})$ is compact, we will essentially approximate this space by a finite $(\eta-1)$-net, but we must be careful about the precise nature of the approximation. We may disintegrate $\mu$ and $\nu$ in the form
\begin{align*}
\nu(dy_0,\ldots,dy_N) &= \lambda(dy_0)\nu_1^{y_0}(dy_1)\cdots\nu_N^{y_0\ldots y_{N-1}}(dy_N), \\
\mu(dy_0,\ldots,dy_N) &= \mu_0(dx_0)\mu_1^{y_0}(dy_1)\cdots\mu_N^{y_0\ldots y_{N-1}}(dy_N).
\end{align*}
Moreover, since $\Y_1=\cdots=\Y_N=\{1,\ldots,m\}$, we can find measurable functions $a_t : \Y_0 \times \{1,\ldots,m\}^t \to [0,1]$ such that $\sum_{y_i=1}^m a_t(y_0,\ldots,y_t)=1$ and
\begin{align*}
\nu_t^{y_0\ldots y_{t-1}}(d \tilde y_t) = \sum_{y_t=1}^m a_t(y_0,\ldots,y_t)\delta_{y_t}(d \tilde y_t).
\end{align*}
Next, we may find measurable functions $\tilde a_t : \Y_0 \times \{1,\ldots,m\}^t \to [0,1]$ for $t=1,\ldots,N$, each with finite range, satisfying $\sum_{y_t=1}^m \tilde a_t(y_0,\ldots,y_t)=1$, and also for every $y \in \Y_0 \times \{1,\ldots,m\}^t$ one of the following conditions holds:
\begin{align}
\begin{split}
\mbox{Case A:} \quad & a_t(y) > \eta - 1 \ \text{ and } \ \tilde a_t(y)/\eta \leq a_t(y)\leq \tilde a_t(y) \eta, \\
\mbox{Case B:} \quad & 0 < a_t(y) \leq \eta-1 \ \text{ and } \  \tilde a_t(y) = \eta-1, \\
\mbox{Case C:} \quad & a_t(y)=0 \ \text{ and } \ \tilde a_t(y) = 0.
\end{split} \label{eq:Cases-AB}
\end{align}
We will use this to build a new joint distribution $\tilde\gamma \in \P(\Y \times \X)$, where we recall the notation $\X=\X_0\times\cdots\times\X_N$ and $\Y=\Y_0\times\cdots\times\Y_N$. But this requires a bit of notation.

For each $t \in \{0,\ldots,N\}$, let $\gamma_{\le t} \in \P(\Y_0\times\X_0 \times \cdots \times \Y_t\times\X_t)$ denote the projection onto the first $t+1$ coordinates. Define $\nu_{\le t}\in\P(\Y_0\times\cdots\times\Y_t)$ and $\mu_{\le t}\in\P(\X_0\times\cdots\times\X_t)$ analogously.
Adopting the convention $0/0:=0$, we next define $D_t : \Y \to [0,\infty)$ for $t \in \{1,\ldots,N\}$ by
\begin{align*}
D_t(y) := \prod_{i=1}^t\frac{\tilde a_i(y_0,\ldots,y_i)}{a_i(y_0,\ldots,y_i)}, \quad y=(y_0,\ldots,y_N) \in \Y.
\end{align*}
Next, define a stopping time $\tau : \Y \to \{1,\ldots,N\} \cup \{\infty\}$ by setting
\begin{align*}
\tau(y_0,\ldots,y_N) &:= \min\{t \in \{1,\ldots,N\} : a_t(y_0,\ldots,y_t) \le \eta -1\},
\end{align*}
with $\min\emptyset=\infty$.
We note for later use that $\nu(\tau < \infty) \le Nm(\eta-1)$, because
\begin{align*}
\nu(\tau < \infty) &= \nu(\exists t :  a_t \le \eta - 1) \le \sum_{t=1}^N\nu(a_t \le \eta - 1)
\end{align*}
and also
\begin{align*}
\nu_t^{y_0\ldots y_{t-1}}\big(a_t(y_0,\ldots,y_{t-1},\cdot) \le \eta - 1\big) = \sum_{y_t=1}^m a_t(y_0,\ldots,y_t)1_{\{a_t(y_0,\ldots,y_t) \le \eta-1\}} \le m(\eta-1).
\end{align*}

Next define $\tilde \nu \in \P(\Y)$ by $d\tilde\nu/d\nu = D_N$, and note that $\tilde\nu$ admits the disintegration
\begin{align*}
\tilde\nu(dy_0,\ldots,dy_N) &= \lambda(dy_0)\tilde\nu_1^{y_0}(dy_1)\cdots\tilde\nu_N^{y_0\ldots y_{N-1}}(dy_N), \qquad \tilde\nu_t^{y_0\ldots y_{t-1}} = \sum_{y_t=1}^m a_t(y_0,\ldots,y_t)\delta_{y_t}.
\end{align*} 
We finally define $\tilde\gamma \in \P(\Y \times \X)$ by
\begin{align}
&\tilde\gamma(dy,dx) = 1_{\{\tau=\infty\}}D_N(y)\gamma(dy,dx) \label{def:gammatilde} \\
	& \quad + \sum_{t=1}^N 1_{\{\tau = t\}} D_{t-1}(y)\gamma_{\le t-1}(dy_0,dx_0,\ldots,dy_{t-1},dx_{t-1})\prod_{i=t}^N\tilde\nu_i^{y_0\ldots y_{i-1}}(dy_i) \, \mu_i^{x_0\ldots x_{i-1}}(dx_i). \nonumber
\end{align} 
Note that $\tilde\nu$ is precisely the $\Y$-marginal of $\tilde\gamma$, and we let  $\tilde\mu \in \P(\X)$ denote the $\X$-marginal of $\tilde\gamma$. Note that
\begin{align*}
\frac{d \nu}{d\tilde \nu}(y) = \frac{1}{D_N(y)} \le \eta^N,
\end{align*}
since $a_t \le \eta \tilde a_t$ pointwise. Moreover, we claim that
\begin{align*}
d\tilde\mu/d\mu \le \eta^N + mN\eta^N(\eta-1).
\end{align*}
To see this, choose a measurable function $f : \X \to [0,1]$ and $t \in \{1,\ldots,N\}$. Note that on $\{\tau=t\}$ it holds that $a_t\le \eta-1$ and thus $\tilde a_t=\eta-1$, and in particular $\tilde\nu_t^{y_0,\ldots,y_{t-1}} \le (\eta-1)\sum_{\ell=1}^m \delta_\ell$. In addition, on $\{\tau=t\}$ we have $a_i > \eta -1$ and for $i \le t-1$ and thus $D_{t-1} \le \eta^{t-1} \le \eta^N$. Similarly, on $\{\tau=\infty\}$ we have $D_N \le \eta^N$. Combining these observations with the formula \eqref{def:gammatilde}, a quick but notationally awkward calculation yields
\begin{align*}
\int_\X f\,d\tilde\mu &= \int_{\Y \times \X} f(x) \,\tilde\gamma(dy,dx) \le (\eta^N + mN\eta^N(\eta-1))\int_\X f\,d \mu.
\end{align*}

Now, writing
\begin{align*}
\tilde\nu^{y_0}_+(dy_1,\ldots,dy_N) := \tilde\nu_1^{y_0}(dy_1)\cdots\tilde\nu_N^{y_0\ldots y_{N-1}}(dy_N), \quad \text{for } y_0 \in \Y_0=[0,1],
\end{align*}
we may write $\tilde\nu=\lambda(dy_0)\tilde\nu_1^{y_0}(dy_1,\ldots,dy_N)$. Recalling that the functions $\tilde a_t$ have finite range, the range of the map $y_0 \mapsto \tilde\nu_1^{y_0}$ is also finite. Hence, there is a finite partition $\Pi$ of $[0,1]$ into measurable sets such that $y_0 \mapsto \tilde\nu_1^{y_0}$ is constant when restricted to any set in $\Pi$. Then, for each $S \in \Pi$ with $\lambda(S) > 0$, the measure
\[
\tilde\nu_{|S \times \Y_1\times\cdots\times\Y_N} = \lambda_{|S}(dy_0)\nu_1^{y_0}(dy_1,\ldots,dy_N)
\]
on $S \times [0,1]$ is a product measure. Thus, $\tilde\nu$ is of the form specified in Proposition \ref{pr:density-discrete-specialcase2}. We may thus find sequences of measurable functions $U^k : \Y_0 \to [0,1]$ and $f^k_t : \Y_0 \times \cdots \times \Y_t \to \X_t$ for $t\in\{0,\ldots,N\}$ such that:
\begin{itemize}
\item $U^k$ is independent of $f^k=(f^k_0,\ldots,f^k_N)$ under $\tilde\nu$ for each $k \in \N$.
\item $\tilde\nu \circ (U^k)^{-1} = \lambda$ for each $k \in \N$.
\item $\tilde\nu \circ (f^k)^{-1}=\tilde\mu$ for each $k \in \N$.
\item $\tilde\gamma^k := \tilde\nu \circ (Id,f^k)^{-1}$ converges weakly to $\tilde\gamma$ as $k\to\infty$.
\end{itemize}
Here we write $f^k$ for the function $\Y\to\X$ given by $f^k(y) = (f^k_0(y_0),f^k_1(y_0,y_1),\ldots,f^k_N(y_0,\ldots,y_N))$ for $y=(y_0,\ldots,y_N) \in \Y$, and $(Id,f^k)$ is the function $\Y \to \Y \times \X$ given by $y \mapsto (y,f^k(y))$.

We finally use the additional independent uniforms $U^k$ to modify $\tilde\gamma^k$ to have marginals $(\nu,\mu)$ instead of $(\tilde\nu,\tilde\mu)$. We will abuse notation somewhat by viewing $f^k_t$ and $U^k$ as functions on $\Y$. In the following we will encounter the peculiar constant
\[
\tilde\eta := \eta^{2N}(1 + mN(\eta-1)).
\]
Note that $\tilde\eta > 1$ and $\tilde\eta \to 1$ as $\eta \to 1$.

Let $E_k = \{U^k \le 1/\tilde\eta\}$, which we somewhat abusively view as a subset of either $\Y_0=[0,1]$, $\Y$, or $\Y \times \X$.
Using $\widetilde\nu \circ (f^k)^
{-1} = \widetilde\mu$ and the independence of $U^k$ and $f^k$, we deduce
\begin{align*}
\widetilde\nu_{|E_k} \circ (f^k)^{-1} \le \frac{1}{\tilde\eta}\widetilde\mu.
\end{align*}
Recalling from above that $d\nu/d\tilde\nu \le \eta^N$ and $d\tilde\mu/d\mu \le \eta^N + mN\eta^N(\eta-1)$, this implies
\begin{align*}
\bar\mu &:= \nu_{|E_k} \circ (f^k)^{-1} \le \eta^N \, \widetilde\nu_{|E_k} \circ (f^k)^{-1} \le \frac{\eta^N}{\tilde\eta}\,\widetilde\mu \le \mu.
\end{align*}
Note that $\mu-\bar\mu$ has total mass $1-\frac{1}{\tilde\eta}$, and so does the measure $\lambda_{|E_k^c}$. By Borel isomorphism, we may thus find a measurable function $T^k=(T^k_0,\ldots,T^k_N) : \Y_0 \to \X$ such that
\begin{align*}
\lambda_{|E_k^c} \circ (T^k)^{-1} = \mu - \bar\mu.
\end{align*} 
We then define $g^k_t : \Y_0 \times \cdots \times \Y_t \to \X_t$ by
\begin{align*}
g^k_t(y_0,\ldots,y_t) := \begin{cases}
f^k_t(y_0,\ldots,y_t) &\text{if } U^k(y_0) \le \frac{1}{\tilde\eta} \\
T^k_t(y_0) &\text{if } U^k(y_0) > \frac{1}{\tilde\eta}.
\end{cases}
\end{align*}
Setting $g^k=(g^k_0,\ldots,g^k_N)$, we then find that $\nu \circ (g^k)^{-1} = \mu$. Indeed,
\begin{align*}
\nu \circ (g^k)^{-1} &= \nu_{|E_k} \circ (f^k)^{-1} + \lambda_{|E_k^c} \circ (T^k)^{-1}  = \bar\mu + \mu - \bar\mu = \mu.
\end{align*}
Finally, let $\gamma^k := \nu \circ (Id,g^k)^{-1}$. Recalling that $\widetilde\gamma$ is close to $\gamma$ as $\eta \to 1$, we see that $\gamma^k_{|E_k}$ is close to $\widetilde\gamma^k_{|E_k}$, which is in turn close to $\gamma_{|E_k}$. The complementary event $E_k^c$ is negligible as $k\to\infty$, and we conclude that $\gamma^k \to \gamma$ as $k\to\infty$ and $\eta \to 1$. Extract a diagonal subsequence to complete the proof.
\end{proof}

\subsection{Step 4: General case}

Finally we establish a denseness result which reduces the desired result to the case treated in the previous section.  In this part we write $\pi_2\circ \pi_1$ / $ \pi\circ T$ for concatenations of transport plans with transport plans and of transport plans with transport maps respectively. Precisely, if $\pi_1 \in \Pi(\nu,\mu)$ and $\pi_2 \in \Pi(\mu,\gamma)$, then $\pi_2 \circ \pi_1 \in \Pi(\nu,\gamma)$ is defined by $\pi_2 \circ \pi_1(A) = \int \int 1_A(x,z)\pi_1(dx,dy)\pi_2(y;dz)$ where we disintegrate $\pi_2(dy,dz)=\mu(dy)\pi_2(y;dz)$. Similarly, if $\pi \in \Pi(\nu,\mu) \subset \P(E \times F)$ and $T : F \to G$ is measurable, then $\pi \circ T := \pi \circ (Id,T)^{-1}$.

For $m\in\N$, consider the mapping 
\begin{align}\label{eq:rounding}
S_m:[0,1]^{N+1} \to [0,1] \times \{1/m, 2/m, \ldots, m/m\}^N \subset [0,1]^{N+1}, \\
S_m( y_0, y_1,  \ldots,  y_n):=( y_0, \lceil y_1 m\rceil/m, \ldots , \lceil  y_n m\rceil/m)
\end{align}
Let $\nu \in \P([0,1]^{N+1})$, and set $\nu^m:= \nu \circ S_m^{-1}$. Abbreviate by $S_m^\nu:=\nu \circ (Id,S_m)^{-1}$ the natural coupling of $\nu$ and $\nu^m$, which belongs to $\Pi^c(\nu,\nu^m)$.
For $\mu \in \P([0,1]^{N+1})$ denote by $\Pi^{c,m}(\nu,\mu)$ the set of couplings in $\Pi^c(\nu,\mu)$ which are of the form $\pi\circ S_m^\nu$ for some $\pi\in \Pi^c( \nu^m,\mu)$, i.e.,
\[
\Pi^{c,m}(\nu,\mu) := \{ \pi\circ S_m^\nu : \pi\in \Pi^c(\nu^m,\mu)\}.
\]

Clearly $\nu^m$ converges weakly to $\nu$ as $m\to \infty$. This is of course equivalent to convergence $W(\nu^m,\nu)\to 0$, where $W$ is the Wasserstein distance on $\P([0,1])$. This is in turn equivalent to saying that there is a sequence of couplings $\pi^m \in \Pi(\nu^m,\nu)$ which converges weakly to the identical coupling. But we are after something stronger, namely the existence of a sequence of \emph{causal} couplings converging to the identical coupling. We achieve this by showing that $\nu^m\to\nu$ in a stronger metric known as the \emph{bi-causal Wasserstein distance} (or `nested distance'). Recall that the set of bi-causal transport plans consists of
\begin{align}
\Pi^{bc}(\nu,\mu):= \Pi^c(\nu,\mu) \cap \{\pi \circ r^{-1} : \pi \in \Pi^c(\mu,\nu)\}, \label{def:bicausalKanto}
\end{align}
where $r(y,x):=(x,y)$. The bi-causal $1$-Wasserstein distance is defined by
\begin{align}
W_{bc}(\nu,\mu):= \inf_{\pi \in \Pi^{bc}(\nu,\mu)} \int_{[0,1]^{N+1} \times [0,1]^{N+1}} \sum_{i=0}^N |y_i-\tilde y_i|\, \pi(dy,d\tilde y). \label{def:Wbicausal}
\end{align}

\begin{lemma}\label{le:specfic-rounding-lemma}
Using the above notations, we have $\lim_{m\to \infty} W_{bc}(\nu, \nu^m)=0$.
\end{lemma}
The proof of Lemma \ref{le:specfic-rounding-lemma} is of a rather different spirit than the rest of this part and we postpone it to Section \ref{se:ApproxSection}.
Here we use Lemma \ref{le:specfic-rounding-lemma} to show the following:

\begin{lemma}\label{le:AuxDenseness} Using the above notations, 
the set $\bigcup_m\Pi^{c,m}(\nu,\mu)$ is dense in  $\Pi_c(\nu,\mu)$.
\end{lemma}
\begin{proof}[Proof of Lemma \ref{le:AuxDenseness}]
We first note the if $\{\tau_m : m \in \N\} \subset \Pi(\nu,\nu)$ is a sequence of couplings which converges weakly to the identical coupling and $\pi \in \Pi(\nu,\mu)$, then also $\pi \circ \tau_m \Rightarrow  \pi$. 

Write $\sigma_m\in \Pi^{bc}(\nu^m,\nu)$ for a coupling which realizes $W_{bc}(\nu^m,\nu)$. Indeed, the infimum is attained because $\Pi^{bc}(\nu,\mu)$ is closed subset of the compact set $\Pi(\nu,\mu)$. By Lemma \ref{le:specfic-rounding-lemma}, $\int|y-x|\, d\sigma_m(y,x)\to 0$ as $m\to\infty$. 
Since $S_m^\nu$ as well as $\sigma_m$ converge to the identical coupling, so does $\tau_m:= \sigma_m\circ S_m^\nu$. Given $\pi\in \Pi^c(\nu,\mu)$, we thus find
\begin{align*}
\pi= \lim_m \pi \circ \tau_m = \lim_m (\pi\circ \sigma_m ) \circ S_m^\nu.
\end{align*}
\end{proof}

We can now finally establish our main result:

\begin{proof}[Proof of Theorem \ref{th:intro:density-discrete}.] Using a Borel isomorphism, we may assume that $\Y_0=\Y_1=\cdots=\Y_N=[0,1]$. By Proposition \ref{pr:density-discrete-specialcase3}, the set of causal Monge couplings is dense in $\Pi^{c,m}(\nu,\mu)$ for each $m$. That is,
\[
\Pi^{c,m}_0(\nu,\mu) := \{\pi \circ S_m^\nu : \pi \in \Pi^c_0(\nu^m,\mu)\}
\]
is dense in $\Pi^{c,m}(\nu,\mu)$.
By Lemma \ref{le:AuxDenseness},  $\bigcup_m \Pi^{c,m}(\nu,\mu)$ is dense in $\Pi^c(\nu,\mu)$.
\end{proof}

\subsection{On the bi-causal approximation}\label{se:ApproxSection}
In this section we show that probabilities concentrated on finite sets are dense in the set of all distributions with respect to the bi-causal Wasserstein distance.
We will work here in the general setting described at the beginning of Section \ref{se:discretetime-twomarg}, assuming that $\Y_0,\ldots,\Y_N$ are now compact metric spaces. (Compactness can be relaxed, but this will suffice for our purposes.)
For $\Y:=\Y_0\times\cdots\times\Y_N$ and $\nu,\mu \in \P(\Y)$, we define $\Pi^c(\nu,\mu)$ as in the beginning of the section (with $\X_i=\Y_i$ for all $i$).
Define $\Pi^{bc}(\nu,\mu)$ as in \eqref{def:bicausalKanto}.
Let $d_i$ denote a bounded metric on $\Y_i$, for each $i=0,\ldots,N$. Define the bicausal distance (also called nested distance) on $\P(\Y)$ by
\begin{align}\label{AWrec}
W_{bc}(\nu,\mu) := \inf_{\gamma \in \Pi^{bc}(\nu,\mu)} \int_{\Y\times\Y} \sum_{i=0}^N d_i(y_i,\tilde y_i)\,\gamma(dy,d\tilde y).
\end{align}
The bicausal distance was introduced by Pflug and Pflug-Pichler \cite{Pf09, PfPi12}.

Recall in the following that an $\epsilon$-net of a metric space $\X$ is a subset $E \subset \X$ with the property that the every point in $\X$ has distance no more than $\epsilon$ from the $E$.

\begin{theorem}\label{thm:TreeApprox}
For each  $m\in \N$ and $i \in\{0,\ldots,N\}$, let $E^m_i$ be a $(1/m)$-net of $\Y_i$ with finite cardinality, and define $\varphi^m_i : \Y_i \to E^m_i$ by setting $\varphi^m_i(y_i)$ to be the nearest point of $E^m_i$ to $y_i$, with ties broken arbitrarily (but measurably).
Define $R_m: \Y \to E^m_0 \times \ldots \times E^m_N$ by 
\begin{align}
R_m( y_0, \ldots, y_N):= (\varphi_0^m(y_1), \ldots, \varphi_N^m(y_N)) .
\end{align}
Then, for any $\nu \in \P(\Y)$, we have
\begin{align}
\lim_{m\to\infty} W_{bc}( \nu, \nu \circ R_m^{-1}) = 0.
\end{align}
\end{theorem}
\begin{proof}
Throughout the proof we write $W$ for the Wasserstein distance on $\P(\Y_i)$ as usual by
\begin{align*}
W(\nu_i,\mu_i) := \inf_{\gamma \in \Pi(\nu_i,\mu_i)} \int_{\Y_i \times \Y_i} d_i(y_i,\tilde y_i)\,\gamma(dy_i,d\tilde y_i), \qquad i \in \{1,\ldots,N\}, \ \ \nu_i,\mu_i \in \P(\Y_i).
\end{align*}
Importantly, by \cite[Theorem 1]{BaBaBeEd19b} it is sufficient to establish convergence in \emph{Hellwig's information topology}. We do not need the full definition, only to note that it suffices to consider the two-period case $N=1$, and it is hard to overstate the value of this simplification. We can also assume that the metrics $d_0$ and $d_1$ on $\Y_0$ and $\Y_1$ are both bounded by $1$.
Throughout the proof, we disintegrate
\begin{align*}
\nu(dy_0,dy_1) = \nu_0(dy_0)\nu_1^{y_0}(dy_1)
\end{align*}
We also set $C_i(e):= \varphi_i^{-1}(\{e\})$, noting that $C_i(e)$ has diameter no more than $1/m$ since $E_i$ is a $(1/m)$-net. 

Fix $\epsilon>0$. Applying Lusin's theorem to the mapping $y_0\to \nu_1^{y_0}$, there is a compact set $K\subseteq \Y_0$ such that $\nu_0(K)>1-\epsilon^2$ and $y_0\mapsto \nu_1^{y_0}$ is (uniformly) continuous on $K$ with respect to Wasserstein-1 distance.
 
Pick $m$ large enough so that $y_0\to \nu_1^{y_0}$ varies by less than $\epsilon$ on $C_0(e_0) \cap K$ for each $e_0 \in E^m_0$ (precisely, $\max_i\sup_{y_0,y_0' \in C_0(e_0)\cap K}W(\nu_1^{y_0},\nu_1^{y_0'}) \le \epsilon$), 
and such that $1/m \leq \epsilon$. From now on $m$ will remain fixed and we will suppress it in certain places to avoid overly heavy notation. In particular we will just write $E_i$ and $\varphi_i$ instead of $E_i^m$ and $\varphi_i^m$. 

Let $\mu := \nu \circ R_m^{-1}$, and disintegrate
\[
\mu(de_0,de_1) = \mu_0(de_0)\mu_1^{e_0}(de_1).
\]
As $\mu$ and its disintegrations are supported on finite sets, we will write, e.g., $\mu(e_0)$ in place of $\mu_0(\{e_0\})$  for $e_0 \in E_0$.
Note that
\[
\mu_0(e_0)= \nu_0(C_0(e_0)), \qquad \mu     _1^{e_0}(e_1)=  \nu\big(C_0(e_0)\times C_1(e_1)\big) / \mu_0(e_0), \ \text{ when } \ \mu_0(e_0) > 0.
\]

Let us define $G_0$ to be the set of $e_0 \in E_0$ satisfying $\nu_0(C_0(e_0)\cap K) \ge \nu_0(C_0(e_0))(1-\epsilon)$, or equivalently $\nu_0(C_0(e_0) \cap K^c) \le \epsilon\,\nu_0(C_0(e_0))$.
Note that $\mu_0(G_0) = \nu_0(\cup_{e_0 \in G_0}C_0(e_0)) \ge 1-\epsilon$, because
\begin{align*}
\nu_0(\cup_{e_0 \in G_0^c}C_0(e_0)) &= \sum_{e_0\in G_0^c} \nu_0(C_0(e_0)) < \frac{1}{\epsilon}\sum_{e_0\in G_0^c}\nu_0(C_0(e_0)\cap K^c) \le \frac{1}{\epsilon}\nu_0(K^c) \le \epsilon.
\end{align*}
For $e_0 \in G_0$ and $\tilde y_0 \in C_0(e_0) \cap K$, it holds that $W(\nu_1^{\tilde y_0},\mu_1^{e_0}) \le 3\epsilon$.
Indeed, this follows from the inequalities
\begin{align*}
W\left( \mu_1^{e_0}, \frac1{\nu_0(C_0(e_0))} \int_{C_0(e_0)} \nu_1^{y_0}\, \nu_0(dy_0) \right) &\le \frac1{\nu_0(C_0(e_0))} \int_{C_0(e_0)} \int_{\Y_1} |y_1 - \varphi_1(y_1)| \nu_1^{y_0}(dy_1)\, \nu_0(dy_0) \\
&\le 1/m \le \epsilon 
\end{align*} 
and
\begin{align*}
W\left(\nu_1^{\tilde y_0}, \frac1{\nu_0(C_0(e_0))} \int_{C_0(e_0)} \nu_1^{y_0}\, \nu_0(dy_0) \right) &\le 
\frac1{\nu_0(C_0(e_0))} \int_{C_0(e_0)}  W\left(\nu_1^{\tilde y_0} , \nu_1^{y_0}\right)\, \nu_0(dy_0) \\
	&\le \frac{\nu_0(C_0(e_0) \cap K^c)}{\nu_0(C_0(e_0))} + \epsilon \\
	&\le 2\epsilon,
\end{align*}
where we used the facts that $\Y_1$ has diameter at most 1 and that $y_0\mapsto\nu_1^{y_0}$ varies by at most $\epsilon$ on $C_0(e_0) \cap K$.

We use next the dynamic programming principle for bicausal optimal transport (this is stated explicitly in e.g.\ \cite[Section 3.1]{BaBeEdPi17} but in fact goes back to much earlier work of R\"uschendorf \cite[Theorem 3]{Ru85}), which in our two-period setting is the identity
\begin{align*}
W_{bc}(\nu,\mu) = \inf_{\gamma_0 \in \Pi(\nu_0,\mu_0)}\int_{\Y_0\times E_0} \big(d_0(y_0,e_0) + W(\nu_1^{y_0},\mu_1^{e_0}) \big) \gamma_0(dy_0,de_0).
\end{align*}
The distance $W(\nu_1^{\tilde y_0},\mu_1^{e_0})$ is attained by a coupling $\gamma_1^{\tilde y_0 e_0} \in \Pi(\nu_1^{\tilde y_0},\mu_2^{e_0})$, which by standard arguments can be chosen to be measurable in $(\tilde y_0,e_0)$. 
Choosing $\gamma_0(dy_0,de_0) := \nu_0(dy_0)\delta_{\varphi_0(y_0)}(de_0)$ in the above identity, and recalling that $1/m \le\epsilon$ and $\sum_{e_0 \in G_0^c}\nu_0(C_0(e_0)) \le \epsilon$, we find
\begin{align*}
W_{bc}(\nu,\nu \circ R_m^{-1}) &= W_{bc}(\nu,\mu) \le \int_{\Y_0} \big(d(y_0,\varphi_0(y_0)) + W(\nu_1^{y_0},\mu_1^{\varphi_0(y_0)})\big)\,\nu_0(dy_0) \\
	&\le 1/m + \int_{\Y_0} W(\nu_1^{y_0},\mu_1^{\varphi_0(y_0)})\,\nu_0(dy_0)  \\
	&\le \epsilon + \sum_{e_0 \in E_0}\int_{C_0(e_0) }  W(\nu_1^{y_0},\mu_1^{e_0})\,\nu_0(dy_0) \\
	&\le 2\epsilon + \sum_{e_o \in G_0}\int_{C_0(e_0) } W(\nu_1^{y_0},\mu_1^{e_0})\,\nu_0(dy_0)  \\
	&\le 2\epsilon + \epsilon^2 + \sum_{e_o \in G_0}\int_{C_0(e_0) \cap K} W(\nu_1^{y_0},\mu_1^{e_0})\,\nu_0(dy_0) \\
	&\le 2\epsilon + \epsilon^2 + 3\epsilon\sum_{e_o \in G_0}\nu_0(C_0(e_0) \cap K) \\
	&\le 5\epsilon + \epsilon^2.
\end{align*} 
This completes the proof.  
\end{proof}

Finally are ready to prove  Lemma \ref{le:specfic-rounding-lemma}:

\begin{proof}[Proof of Lemma \ref{le:specfic-rounding-lemma}]
Set $\Y_i=[0,1]$ for $i=0, \ldots, N$.
Recall from the previous section that for $m\geq 1$ the mapping $S_m$ is given by  
\begin{align*}
S_m:[0,1]^{N+1} & \to [0,1] \times \{1/m, 2/m, \ldots, m/m\}^N, \\
S_m(y_0,y_1,\ldots,y_N) & :=(y_0,\lceil y_1 m\rceil/m, \ldots, \lceil y_N m\rceil/m).
\end{align*} 
We define $R_m$ through
\begin{align*}
R_m:[0,1]^{N} & \to \{1/m, 2/m, \ldots, m/m\}^N, \\
R_m( y_1,  \ldots,  y_N) & :=(\lceil  y_1 m\rceil/m, \ldots , \lceil  y_n m\rceil/m).
\end{align*} 
Note that $S_m(y_0,y_1,\ldots,y_N) = (y_0,R_m(y_1,\ldots,y_N))$.
Then $\nu$ admits the disintegration
\begin{align*}
\nu(dy_0,\ldots,dy_N) = \nu_0(dy_0)\nu^{ y_0}(dy_1,\ldots,dy_N),
\end{align*}
and $\nu \circ S_m^{-1}$ admits the disintegration
\begin{align*}
\nu \circ S_m^{-1}(dy_0,\ldots,dy_N) = \nu_0(dy_0) \nu^{y_0} \circ R_m^{-1}(dy_1,\ldots,dy_N).
\end{align*}
We use again the dynamic programming principle for causal optimal transport \cite[Theorem 2.7]{veraguas2016causal} to write
\begin{align}
W_{bc}(\nu, \nu\circ S_m^{-1}) &= \inf_{\gamma\in \Pi(\nu_0, \nu_0 \circ S_m^{-1})}\int_{[0,1] \times [0,1]} |y_0 - x_0| + W_{bc}(\nu^{y_0},  \nu^{x_0} \circ R_m^{-1}) \, \gamma(dy_0,d \tilde y_0)  \\
&  \leq \int_{[0,1]} W_{bc}(\nu^{y_0}, \nu     ^{y_0} \circ R_m^{-1}) \, \nu_0(dy_0).  
\end{align}
By Theorem \ref{thm:TreeApprox} and dominated convergence, this quantity tends to $0$ as $m\to \infty$.
\end{proof}

\section{Continuous time} \label{se:dynamic-cont}

\subsection{Continuous time, continuous paths} \label{se:continuoustime}
We now extend the results of the previous section to continuous time. There several natural choices of path space for our processes, and we will work mainly with continuous and c\`adl\`ag paths. Let $C(\R_+;\X)$ denote the space of continuous functions from $\R_+$ to $\X$, endowed with the topology of uniform convergence on compacts.
Let $D(\R_+;\X)$ denote the Skorohod space of c\`adl\`ag functions\footnote{As usual, a function $f$ defined on an interval in $\R_+$ is called c\`adl\`ag if it is right-continuous and the limit $\lim_{s \uparrow t}f(s)$ exists for every $t > 0$.} from $\R_+$ to (a metric space) $\X$, endowed with the usual Skorohod $J_1$ topology. Define $C([0,T];\X)$ and $D([0,T];\X)$ analogously on finite time intervals.

For a continuous-time stochastic process $X=(X_t)_{t \ge 0}$, let $\FF^X=(\F^X_t)_{t \ge 0}$ denote the filtration generated by $X$, i.e., $\F^X_t = \sigma(X_s : s \le t)$. Recall that a process $Y=(Y_t)_{t \ge 0}$ is \emph{$\FF^X$-adapted} if $Y_t$ is $\F^X_t$-measurable for each $t \ge 0$. For any filtration $\FF=(\F_t)_{t \ge 0}$, define as usual
\[
\F_\infty = \sigma\bigg(\bigcup_{t \ge 0}\F_t\bigg).
\]
Filtrations are not augmented in any way unless explicitly stated.

\begin{assumption*}[\textbf{A}] \label{ass:A}
We are given a probability space $(\Omega,\F,\PP)$ supporting the following:
\begin{enumerate}[(1)]
\item A random variable $Z$ taking values in a Polish space $\Z$.
\item A filtration $\FF=(\F_t)_{t \ge 0}$ satisfying the following:
\begin{enumerate}[(a)]
\item $\F_t \subset \sigma(Z)$, for every $t \ge 0$.
\item $(\Omega,\F_t)$ is a standard Borel space  for each $t \ge 0$.
\item $(\Omega,\F_t,\PP)$ is nonatomic  for each $t > 0$.
\end{enumerate}
\item A measurable stochastic process $X=(X_t)_{t \ge 0}$ taking values in a Polish space $\X$.
\end{enumerate}
\end{assumption*}

An additional bit of structure will allow us to add another useful equivalence in the following results and streamline the proof:

\begin{assumption*}[\textbf{B}] \label{ass:B}
We are given for each $t \ge 0$ a random variable $U_t$ with values in a Polish space $\U_t$ such that $\F_t = \sigma(U_t)$.
\end{assumption*}

It is worth noting that, with Assumption \hyperref[ass:A]{(A)} in force, Assumption \hyperref[ass:B]{(B)} always holds with $\U_t=[0,1]$ for every $t$, thanks to Borel isomorphism and Assumption (A.2b). In other words, Assumption \hyperref[ass:B]{(B)} imposes no additional structural constraints. Note that we make no requirements of measurability in $t$ of the process $(U_t)$.

\begin{example} \label{ex:Ycadlag}
Typically, we are given a c\`adl\`ag process $Y=(Y_t)_{t \ge 0}$ with values in some Polish space $\Y$ such that $\F_t = \F^Y_t = \sigma(Y_s : s \le t)$ is the natural filtration, and also $\Z=D(\R_+;\Y)$ and $Z=Y$. In Assumption \hyperref[ass:B]{(B)}, we may then set $\U_t = D([0,t];\Y)$ and $U_t = Y|_{[0,t]}$. With these identifications, we deduce Theorem \ref{th:intro:density-continuous} from Theorem \ref{th:density-continuous}.
\end{example}

One last technical assumption is needed to avoid difficulties at time zero:

\begin{assumption*}[\textbf{C}] \label{ass:C}
Either $X_0$ is a.s.\ $\F_0$-measurable, or $(\Omega,\F_0,\PP)$ is nonatomic.
\end{assumption*}

\begin{theorem}[Continuous paths] \label{th:density-continuous}
Suppose Assumptions \hyperref[ass:A]{(A)}, \hyperref[ass:B]{(B)}, and \hyperref[ass:C]{(C)} hold, and that the process $X$ of Assumption \hyperref[ass:A]{(A.3)} has continuous paths. Assume also that the space $\X$ is homeomorphic to a convex subset of a locally convex space. Then the following are equivalent:
\begin{enumerate}[(i)]
\item For each $t \ge 0$,  $\F^X_t$ is conditionally independent of $Z$ given $\F_t$.
\item There exists a dense set $\T \subset \R_+$ such that, for each $t \in \T$,  $\F^X_t$ is conditionally independent of $Z$ given $\F_t$.
\item There exists a sequence $X^n$ of continuous $\FF$-adapted processes such that $(X^n,Z) \Rightarrow (X,Z)$ in $C(\R_+;\X) \times \Z$.
\item Statement (iii) holds, and, for each $n$, $X^n$ is linear between some time points $0=t_0 < t_1 < \cdots < t_n$, and constant after time $t_n$, with $X^n_{t_k} = f_k(U_{t_{k-1}})$ for some continuous function $f_k : \U_{t_{k-1}} \rightarrow \X$, for each $k=1,\ldots,n$.
\end{enumerate}
\end{theorem}

The role of the random variable $Z$ in Theorem \ref{th:density-continuous}  is to incorporate additional information that the process $X$ must respect. This can be useful, for example, for stochastic control problems with partial information, illustrated in Section \ref{se:optimalcontrol}. 
Note that the weak convergence $(Z,X^n) \Rightarrow (Z,X)$ in $\Z \times C(\R_+;\X)$ implies the weak convergence $(Y,X^n) \Rightarrow (Y,X)$ in $D(\R_+;\Y) \times C(\R_+;\X)$ for any c\`adl\`ag $Z$-measurable process $Y=(Y_t)_{t \ge 0}$ with values in some Polish space $\Y$, which follows quickly from Lemma \ref{le:stable-convergence}.

\begin{proof}[Proof of Theorem \ref{th:density-continuous}]
Clearly (i) implies (ii) and (iv) implies (iii).
We next prove that (ii) implies (iv). First, using the fact that $\X$ is a convex subset of a vector space, it is straightforward to construct a sequence of continuous $\FF^X-$adapted processes, $X^n$, such that $X^n \rightarrow X$ a.s. in $C(\R_+;\X)$ and also each $X^n$ is piecewise linear with a deterministic and finite set of points of non-differentiability. Thus, it suffices to prove the claim under the assumption that $X$ is of the form 
\begin{align*}
 X_t &= \frac{t-t_k}{t_{k+1}-t_k}H_k+ \frac{t_{k+1}-t}{t_{k+1}-t_k}H_{(k-1)^+}, \quad \text{ for } t \in [t_k,t_{k+1}], \ k=0,\ldots,m-1, \\
 X_t &= H_{m-1}, \quad \text{ for } t \ge t_m,
\end{align*}
where $m \in \N$, the times $0=t_0 < t_1 < \ldots < t_m$ are deterministic, and $H_0,\ldots,H_{m-1}$ are some $\X$-valued random variables.
Assumption (ii) implies that $H_k$ is conditionally independent of $Z$ given $\F_{t_k}=\sigma(U_{t_k})$ for each $k=0,\ldots,m-1$.

We now apply Theorem \ref{th:density-discrete} with $Y_k=U_{t_k}$ for each $k=1,\ldots,m-1$ and $Y_m=Z$. Note that the process $Y=(Y_1,\ldots,Y_m)$ takes values in the disjoint union $\Y=\sqcup_{k=1}^{m-1}\U_{t_k} \sqcup \Z$. Moreover, $\F^Y_k = \F_{t_k}$ for $k=1,\ldots,m-1$ and $\F^Y_m=\sigma(Z)$. Apply Theorem \ref{th:density-discrete} to find a sequence $H^n=(H^n_k)_{k=1}^m$ of $(\F^Y_k)_{k=1}^m$-adapted processes such that
\[
\left(Y,H^n_1,\ldots,H^n_m\right) \Rightarrow \left(Y,H_1,\ldots,H_m\right),
\]
in $\Y^m \times \X^m$. Discarding the first $m-1$ components of $Y=(Y_1,\ldots,Y_m)$ and using $Y_m=Z$, we deduce 
\[
\left(Z,H^n_1,\ldots,H^n_{m-1}\right) \Rightarrow \left(Z,H_1,\ldots,H_{m-1}\right),
\]
in $\Z \times \X^{m-1}$. 
In Assumption \hyperref[ass:C]{(C)}, if $X_0=H_0$ is $U_0$-measurable, then setting $H^n_0 := H_0$ we may use Lemma \ref{le:stable-convergence} to get
\begin{align}
\left(Z,H^n_0,H^n_1,\ldots,H^n_{m-1}\right) \Rightarrow \left(Z,H_0,H_1,\ldots,H_{m-1}\right), \label{pf:density-continuous1}
\end{align}
in $\Z \times \X^m$. On the other hand, in Assumption \hyperref[ass:C]{(C)}, if $\F_0=\sigma(U_0)$ is nonatomic, then an application of Theorem \ref{th:density-discrete} directly gives us a sequence $H^n=(H^n_k)_{k=0}^{m-1}$ of $(\F_{t_k})_{k=0}^{m-1}$-adapted processes such that \eqref{pf:density-continuous1} holds. In either case,  we may now define
\begin{align*}
X^n_t &= \frac{t-t_k}{t_{k+1}-t_k}H^n_k+ \frac{t_{k+1}-t}{t_{k+1}-t_k}H^n_{(k-1)^+}, \quad \text{ for } t \in [t_k,t_{k+1}], \ k=0,\ldots,m-1, \\
X^n_t &= H^n_{m-1}, \quad \text{ for } t \ge t_m,
\end{align*}
and conclude that $( X^n,Z) \Rightarrow ( X,Z)$ in $C(\R_+;\X) \times \Z$. Note that in the construction of $H^n=(H^n_k)_{k=1}^{m-1}$, Theorem \ref{th:density-discrete} allows us to take $H^n_k = f^n_k(U_{t_k})$ for some continuous functions $f^n_k$ on $\U_{t_k}$. Hence, we have proven that (ii) implies (iv).

With the proof that (ii) implies (iv) now complete, we complete the chain of implications by showing that (iii) implies (i), arguing  as in the proof of Theorem \ref{th:density-discrete}. Note that (i) holds if and only if for every $t$ and every bounded measurable functions $f$, $h$, and $g$ on $C([0,t];\X)$, $D([0,t];\Y)$, and $\Z$, respectively, we have 
\begin{align}
\E[f(X|_{[0,t]})h(U_t)(g(Z) - \tilde{g}(U_t))] &= 0, \label{pf:density-continuous2} \\
\text{ where } \tilde{g}(U_t) &= \E[g(Z) | \F_t]. \nonumber
\end{align}
By a routine approximation, it is enough that this holds only for \emph{continuous} $f$. Hence, if a sequence of continuous processes $X^n$ satisfies \eqref{pf:density-continuous2} as well as $(X^n,Z) \Rightarrow (X,Z)$ for some process $X$, then using Lemma \ref{le:stable-convergence} we conclude that $X$ must satisfy \eqref{pf:density-continuous2} as well. To conclude that (iii) implies (i), simply notice that any $\FF$-adapted process $X$ trivially satisfies \eqref{pf:density-continuous2}.
\end{proof}

We close this section with a topological formulation of Theorem \ref{th:density-continuous}, in the case described in Example \ref{ex:Ycadlag} with $\Z=D(\R_+;\Y)$ and $Z=Y$. On the space $D(\R_+;\Y) \times C(\R_+;\Y)$, let $Y=(Y_t)_{t \ge 0}$ and $X=(X_t)_{t \ge 0}$ denote the canonical processes, and let $\FF^Y$ and $\FF^X$ denote their natural filtrations. Fix $\nu \in \P(D(\R_+;\Y))$, and for each $t \ge 0$ let $\nu_t \in \P(\Y)$ denote the time-$t$ marginal law.
Let $\Pi^c(\nu)$ denote the set of compatible joint laws, i.e., the set of probability measures $P$ on $D(\R_+;\Y) \times C(\R_+;\Y)$ with first marginal $\nu$ and under which $\F^X_t$ is conditionally independent of $\F^Y_\infty$ given $F^Y_t$, for every $t \ge 0$. Let $\Pi^c_0(\nu)$ denote the subset of adapted joint laws, i.e., the set of probability measures of the form
\[
\nu(dy)\delta_{\hat{x}(y)}(dx),
\]
where $\hat{x} : D(\R_+;\Y) \rightarrow C(\R_+;\Y)$ is \emph{adapted} in the sense that $\hat{x}^{-1}(C) \in \F^Y_t$ for every $t \ge 0$ and every set $C \in \F^X_t$.

Notice that $P \in \P(D(\R_+;\Y) \times C(\R_+;\X))$ belongs to $\Pi^c(\nu)$ if and only if $P \circ Y^{-1} = \nu$ and the following holds: For each $t \ge 0$, each bounded continuous function $f : C([0,t];\X) \rightarrow \R$, and each bounded measurable functions $h$ and $g$ on $D([0,t];\Y)$ and $D(\R_+;\Y)$, respectively, we have
\begin{align*}
\E^P\left[f(X_{\cdot \wedge t})h(Y_{\cdot \wedge t})\left(g(Y)-\tilde{g}(Y_{\cdot \wedge t})\right)\right] &= 0, \\
\text{where } \ \tilde{g}(Y_{\cdot \wedge t}) &= \E^\nu[g(Y) \, | \, \F^Y_t].
\end{align*}
It follows from Lemma \ref{le:stable-convergence} that $\Pi^c(\nu)$ closed. For each $x \in \X$, define
\[
\P_x = \left\{P \in \P(D(\R_+;\Y) \times C(\R_+;\X)) : P(X_0=x)=1\right\}.
\]
We then deduce the following facts from Theorem \ref{th:density-continuous}:
\begin{enumerate}[(a)]
\item If $\nu_0$ is nonatomic, then the closure of $\Pi^c_0(\nu)$ is $\Pi^c(\nu)$.
\item Suppose $\nu_t$ is nonatomic for each $t > 0$, and $\nu \circ Y_0^{-1} = \delta_{y_0}$ for some $y_0 \in \Y$. Then, for each $x \in \X$, the closure of $\P_x \cap \Pi^c_0(\nu)$ is precisely $\P_x \cap \Pi^c(\nu)$.
\end{enumerate}

\begin{remark}
Note the piecewise-linear approximations in Theorem \ref{th:density-continuous} require the space $\X$ to be a convex subset of a vector space. This requirement will not be needed in the following two sections, where we approximate $X$ by piecewise-constant processes.
\end{remark}

\subsection{Continuous time, c\`adl\`ag paths}
An analogue of Theorem \ref{th:density-continuous} holds when the processes $X$ is merely c\`adl\`ag, but there is a subtle breakdown in the implication (iii) $\Rightarrow$ (i). To clarify the correct analogue in the c\`adl\`ag case requires a more careful choice of filtrations. For any filtration $\FF=(\F_t)_{t \ge 0}$, let $\FF_+ = (\F_{t+})_{t \ge 0}$ denote the right-continuous version, defined by $\F_{t+} = \cap_{s > t}\F_s$.
A filtration $\FF= (\F_t)_{t \ge 0}$ is \emph{complete} if $\F_0$ contains all of the null sets of $\F_\infty$. A generic filtration $\FF$ can of course be rendered complete by appending to $\F_0$ all of the $\F_\infty$-null sets, and the resulting filtration, denoted $\overline{\FF}$, is called the \emph{completion of $\FF$}. It is well known that $\overline{\FF}_+ = \overline{(\FF_+)}$; that is, the operations of ``completion'' and ``making right-continuous'' commute.

\begin{theorem}[C\`adl\`ag paths] \label{th:density-cadlag}
Suppose Assumptions \hyperref[ass:A]{(A)}, \hyperref[ass:B]{(B)}, and \hyperref[ass:C]{(C)} hold, and assume the process $X$ of Assumption \hyperref[ass:A]{(A.3)} has c\`adl\`ag paths. 
Consider the following statements:
\begin{enumerate}[(i)]
\item For each $t \ge 0$, $\F^X_t$ is conditionally independent of $Z$ given $\F_t$.
\item There exists a dense set $\T \subset \R_+$ such that, for each $t \in \T$,  $\F^X_t$ is conditionally independent of $Z$ given $\F_t$.
\item There exists a sequence $X^n$ of c\`adl\`ag $\FF$-adapted processes such that $(X^n,Z) \Rightarrow (X,Z)$ in $D(\R_+;\X) \times \Z$.
\item For every $t \ge 0$, $\F^X_{t+}$ is conditionally independent of $Z$ given $\F_{t+}$.
\end{enumerate}
Then $(i) \Rightarrow (ii) \Leftrightarrow (iii) \Rightarrow (iv)$. If the completion of the filtration $\FF$ is right-continuous, then (i-v) are equivalent. In particular, this holds if $\FF$ is generated by a Feller process.
Lastly, if $\X$ is homeomorphic to  a convex subset of a locally convex space, then (ii) and (iii) are equivalent to the following:
\begin{enumerate}
\item[(v)] Statement (iii) holds with $X^n$ piecewise constant between some time points $0=t_0 < t_1 < \cdots < t_n$, and constant after time $t_n$, with $X^n_{t_k} = f_k(U_{t_{k}})$ for some continuous function $f_k : \U_{t_{k}} \rightarrow \X$, for each $k=1,\ldots,n$.
\end{enumerate}
\end{theorem}

The proof makes use of a simple lemma which will be useful again in Section \ref{se:randomized-stopping}:

\begin{lemma} \label{le:compatibility-rightcontinuous}
Suppose $\FF$ and $\GG$ are two filtrations, and suppose $\H$ is a $\sigma$-field. Suppose $\F_t$ is conditionally independent of $\H$ given $\G_t$, for every $t \in \T$, where $\T \subset \R_+$ is dense. Then $\F_{t+}$ is conditionally independent of $\H$ given $\G_{t+}$, for every $t \in \R_+$.
\end{lemma}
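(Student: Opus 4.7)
The plan is to reduce conditional independence to the equivalent statement that $\E[H\,|\,\G_t \vee \F_t] = \E[H\,|\,\G_t]$ a.s.\ for every bounded $\H$-measurable random variable $H$, and then to pass this identity from $t \in \T$ to general $t$ using the reverse martingale convergence theorem. Throughout, the natural monotonicity is available: since $\T$ is dense in $\R_+$, for any $t$ we may pick a strictly decreasing sequence $(t_n) \subset \T$ with $t_n \downarrow t$, and then $(\G_{t_n})$, $(\F_{t_n})$, and $(\G_{t_n} \vee \F_{t_n})$ are all decreasing sequences of $\sigma$-fields.

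Fix a bounded $\H$-measurable $H$. The hypothesis provides, for each $n$,
\[
\E[H\,|\,\G_{t_n} \vee \F_{t_n}] = \E[H\,|\,\G_{t_n}] \quad \text{a.s.}
\]
By the reverse martingale convergence theorem, the right-hand side converges a.s.\ and in $L^1$ to $\E[H\,|\,\bigcap_n \G_{t_n}]$, and by density this intersection equals $\G_{t+}$. Similarly, the left-hand side converges to $\E[H\,|\,\bigcap_n (\G_{t_n} \vee \F_{t_n})]$. Equating the two limits yields
\[
\E\bigl[H\,\bigl|\,\textstyle\bigcap_n (\G_{t_n} \vee \F_{t_n})\bigr.\bigr] = \E[H\,|\,\G_{t+}] \quad \text{a.s.}
\]

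The one point that could give pause is that $\bigcap_n (\G_{t_n} \vee \F_{t_n})$ may strictly contain $\G_{t+} \vee \F_{t+}$ in general, so one cannot directly identify the left-hand limit with $\E[H \,|\, \G_{t+} \vee \F_{t+}]$. But this is harmless: since $\G_{t+} \vee \F_{t+} \subseteq \bigcap_n (\G_{t_n} \vee \F_{t_n})$ and the displayed random variable is $\G_{t+}$-measurable, the tower property gives
\[
\E[H\,|\,\G_{t+} \vee \F_{t+}] = \E\bigl[\E[H\,|\,\G_{t+}]\,\bigl|\,\G_{t+} \vee \F_{t+}\bigr.\bigr] = \E[H\,|\,\G_{t+}] \quad \text{a.s.},
\]
which is the desired conditional independence of $\F_{t+}$ and $\H$ given $\G_{t+}$. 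Since $H$ was arbitrary bounded and $\H$-measurable, the proof is complete.
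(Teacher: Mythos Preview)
Your proof is correct and follows the same core idea as the paper's: pick $t_n \in \T$ with $t_n \downarrow t$ and invoke reverse (backward) martingale convergence. The only difference is in which equivalent form of conditional independence you carry through the limit. You use the projection form $\E[H \mid \G_t \vee \F_t] = \E[H \mid \G_t]$, which forces you to deal with the join $\G_{t_n} \vee \F_{t_n}$ and the fact that $\bigcap_n(\G_{t_n} \vee \F_{t_n})$ may strictly contain $\G_{t+} \vee \F_{t+}$; you correctly resolve this with the tower property. The paper instead uses the product form $\PP(A \cap B \mid \G_t) = \PP(A \mid \G_t)\,\PP(B \mid \G_t)$ for $A \in \H$ and $B \in \F_{t+} \subset \F_{t_n}$, which sidesteps the join entirely and lets the limit pass directly. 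Both routes are equally valid; the paper's is marginally shorter because it avoids the extra tower step.
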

\begin{proof}
Fix $t \ge 0$, and find $t_n \in \T$ such that $t_n > t$ and $t_n \downarrow t$. Let $A \in \H$ and $B \in \F_{t+}$. Then $B \in \F_{t_n}$ for all $n$, so $\PP(B \, | \, \G_{t_n})\PP(A \, | \, \G_{t_n}) =  \PP(B \cap A \, | \, \G_{t_n})$.
By backward martingale convergence, letting $t_n \downarrow t$ yields $\PP(B \, | \, \G_{t+})\PP(A \, | \, \G_{t+}) =  \PP(B \cap A \, | \, \G_{t+})$.
\end{proof}

\begin{proof}[Proof of Theorem \ref{th:density-cadlag}]
Clearly (i) implies (ii) and (v) implies (iii). It follows from Lemma \ref{le:compatibility-rightcontinuous} that (ii) implies (iv).

We begin with the proof that (ii) implies (iv) as well as (v) under the assumption that $\X$ is convex. Approximating by piecewise constant processes, we may assume $X$ is of the form
\[
X_t = \sum_{k=0}^mH_k1_{[t_k,t_{k+1})}(t),
\]
where $0 = t_0 < t_1 < \cdots < t_m = T$ are deterministic, with $t_{m+1} :=\infty$. As in the proof of step 3 of Theorem \ref{th:density-continuous}, we can apply Theorem \ref{th:intro:density-discrete} to find a sequence $H^n=(H^n_k)_{k=0}^m$ of $(\F_{t_k})_{k=0}^m$-adapted processes such that
\[
\left(Z,H^n_0,\ldots,H^n_m\right) \Rightarrow \left(Z,H_0,\ldots,H_m\right),
\]
in $\Z \times \X^{m+1}$. Define
\[
X^n_t = \sum_{k=0}^mH^n_k1_{[t_k,t_{k+1})}(t),
\]
and conclude that $(X^n,Z) \Rightarrow (X,Z)$ in $D(\R_+;\X) \times \Z$. Under the assumption that $\X$ is convex, we may apply Theorem \ref{th:density-discrete} instead of Theorem \ref{th:intro:density-discrete} to get $H^n_k$ of the form $H^n_k = f^n_k(U_{t_k})$ for some continuous function $f^n_k : \U_{t_k} \rightarrow \X$

We next prove that (iii) implies (ii).
For any $\FF$-adapted process $X$, it holds that
\begin{align}
\E[f(X_{\cdot \wedge t})h(Y_{\cdot \wedge t})(g(Z) - \tilde{g}(Y_{\cdot \wedge t}))] &= 0, \label{pf:density-cadlag1} \\
\text{ where } \tilde{g}(Y_{\cdot \wedge t}) &= \E[g(Z) | \F_t]. \nonumber
\end{align}
for every $t$ and every bounded functions $f$, $h$, and $g$ on $D(\R_+;\X)$, $D(\R_+;\Y)$, and $\Z$, respectively.
Suppose we are given a sequence of continuous processes $X^n$ such that $(X^n,Z) \Rightarrow (X,Z)$ and also \eqref{pf:density-cadlag1} holds (for every $t$ and every $f$, $h$, $g$). At this point, we are faced with the annoying fact that the restriction map
\[
D(\R_+;\X) \ni x \mapsto x|_{[0,t]} \in D([0,t];\X)
\]
is continuous at a point $x \in D(\R_+;\X)$ if and only if $x$ is continuous at $t$. To get around this, note that $\T = \{t \ge 0 : \PP(X_t = X_{t-})=1\}$ is dense in $\R_+$ (see \cite[Lemma 3.7.7]{ethier-kurtz}).  Hence, for $t \in \T$, and for $f$, $h$, and $g$ as above, we use Lemma \ref{le:stable-convergence} to conclude
\begin{align*}
0 &= \lim_{n\rightarrow\infty}\E[f(X^n_{\cdot \wedge t})h(U_t)(g(Z) - \tilde{g}(U_t))] \\
	&= \E[f(X_{\cdot \wedge t})h(U_t)(g(Z) - \tilde{g}(U_t))].
\end{align*}
This proves (ii).

We lastly prove that (iv) implies (i) under the additional assumption that the completion of $\FF$ is right-continuous.
Since $\F^X_t \subset \F^X_{t+}$, (iv) implies that $\F^X_t$ is conditionally independent of $Z$ given $\F_{t+}$, for every $t \ge 0$. But every set of $\F_{t+}$ differs from a set in $\F_t$ by an $\F_\infty$-null set, and we deduce easily that $\F^X_t$ is conditionally independent of $Z$ given $\F_t$, for every $t \ge 0$. Finally, it is well known that the completed natural filtration of a Feller process is automatically right-continuous; see \cite[Theorem I.47]{protter-book}.
\end{proof}

\begin{remark}
Recalling the setting of Example \ref{ex:Ycadlag}, by taking $\Z = D(\R_+;\Y)$  and $Z=Y$ we can deduce from Theorem \ref{th:density-cadlag} an analogue of  Theorem \ref{th:intro:density-continuous} in which $X$ is c\`adl\`ag.
\end{remark}

A topological formulation of Theorem \ref{th:density-cadlag} (and Theorem \ref{th:density-measurable} in the following section) is possible, along the lines of the discussions at the end of Section \ref{se:continuoustime}. We do not bother to spell this out here, as it differs only in notation from the aforementioned discussion.

\subsection{Continuous time, measurable paths}
We state one more alternative of Theorems \ref{th:density-continuous} and \ref{th:density-cadlag}, in the case that $X$ is not continuous or even c\`adl\`ag but merely measurable.
For a Polish space $\X$, let $M(\R_+;\X)$ denote the set of equivalence classes of a.e.\ equal measurable functions from $\R_+$ to $\X$. Endow $M(\R_+;\X)$ with the topology of convergence in measure, with $\R_+$ equipped with the measure $e^{-x}dx$ (any finite measure equivalent to Lebesgue measure would do). Note that $M(\R_+;\X)$ is a Polish space, and $x^n \rightarrow x$ in $M(\R_+;\X)$ if and only if $x^n|_{[0,t]} \rightarrow x|_{[0,t]}$ in Lebesgue measure for each $t > 0$.

A measurable $\X$-valued process can always be naturally identified with an $M(\R_+;\X)$-valued random variable, but we must be careful to define its natural filtration in a way that reflects the topological setting. For each $t > 0$, we may view the restriction $X|_{[0,t]}$ as a $M([0,t];\X)$-valued random variable, and we let $^*\!\F^X_t = \sigma(X|_{[0,t]}) = \sigma(\{X|_{[0,t]} \in A\} : A \subset M([0,t];\X) \text{ Borel})$. Equivalently, 
we may write
\[
^*\!\F^X_t = \sigma\left(\int_0^th(s,X_s)ds : \text{ for bounded continuous } h : [0,T] \times \X \rightarrow \R\right).
\]
A priori, this could be smaller than the $\sigma$-field $\F^X_t=\sigma(X_s : s \le t)$, because passing to the equivalence class has lost us some ``pointwise'' information.
This filtration $^*\FF^X=(^*\!\F^X_t)_{t \ge 0}$ seems to be the appropriate one to use, as evidenced by Theorem \ref{th:density-measurable} below.

\begin{remark}
The two natural filtrations $^*\FF^X$ and $\FF^X$ never disagree by too much, in the sense that one one can always find a modification of $X$ for which they agree. More precisely, let $\GG=(\G_t)_{t \ge 0}$ denote the natural filtration on $M(\R_+;\X)$ defined by letting $\G_t$ denote the $\sigma$-field generated by the restriction map $M(\R_+;\X) \ni x \mapsto x|_{[0,t]} \in M([0,t];\X)$. Then there exists a measurable map $\widehat{x} : \R_+ \times M(\R_+;\X) \rightarrow \X$ which is predictable with respect to $\GG$ and which satisfies $\hat{x}(t,x)=x_t$ for a.e.\ $t$, for each $x$.
\end{remark}

\begin{theorem}[Measurable paths] \label{th:density-measurable}
Suppose Assumptions \hyperref[ass:A]{(A)} and \hyperref[ass:B]{(B)} hold. Then the following are equivalent:
\begin{enumerate}[(i)]
\item For every $t \ge 0$, $^*\!\F^X_t$ is conditionally independent of $Z$ given $\F_t$.
\item There exists a dense set $\T \subset \R_+$ such that, for each $t \in \T$,  $^*\!\F^X_t$ is conditionally independent of $Z$ given $\F_t$.
\item There exists a sequence $X^n$ of measurable $\FF$-adapted processes such that $(X^n,Z) \Rightarrow (X,Z)$ in $M(\R_+;\X) \times \Z$.
\end{enumerate}
If $\X$ is homeomorphic to  a convex subset of a locally convex space, then (i--iii) are equivalent to the following:
\begin{enumerate}
\item[(iv)] Statement (iii) holds with $X^n$ piecewise constant between some time points $0=t_0 < t_1 < \cdots < t_n$, and constant after time $t_n$, with $X^n_{t_k} = f_k(U_{t_{k-1}})$ for some continuous function $f_k : \U_{t_{k-1}} \rightarrow \X$, for each $k=1,\ldots,n$.
\end{enumerate}
\end{theorem}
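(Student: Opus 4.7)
My plan is to follow the template of the proofs of Theorems \ref{th:density-continuous} and \ref{th:density-cadlag}, establishing the cycle $(i) \Rightarrow (ii) \Rightarrow (iv) \Rightarrow (iii) \Rightarrow (i)$, of which $(i) \Rightarrow (ii)$ and $(iv) \Rightarrow (iii)$ are immediate.

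For $(iii) \Rightarrow (i)$, the structural advantage of the measurable-paths setting over the Skorokhod setting is that the restriction map $M(\R_+;\X) \ni x \mapsto x|_{[0,t]} \in M([0,t];\X)$ is continuous for \emph{every} $t > 0$, directly from the stated characterization of convergence in $M(\R_+;\X)$. Consequently, given $\FF$-adapted measurable $X^n$ with $(X^n, Z) \Rightarrow (X, Z)$, I pass the trivial compatibility identity $\E[f(X^n|_{[0,t]}) h(U_t)(g(Z) - \tilde g(U_t))] = 0$ (valid for each $X^n$ because $X^n|_{[0,t]}$ is $\F_t$-measurable as a map into $M([0,t];\X)$) to the limit via Lemma \ref{le:stable-convergence}, for each $t > 0$ and each bounded continuous $f : M([0,t];\X) \to \R$; a density argument extends to all bounded measurable $f$. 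At $t=0$, $M([0,0];\X)$ is a single equivalence class, so $^*\!\F^X_0$ is trivial and the conclusion is vacuous---which is also why Assumption~\hyperref[ass:C]{(C)} is unnecessary.

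The main step is $(ii) \Rightarrow (iv)$. I pick deterministic partitions $0 = t_0^n < t_1^n < \cdots < t_{m_n}^n \nearrow \infty$ with mesh tending to $0$ and $t_k^n \in \T$ for each $k \ge 1$. Exploiting the convex structure of $\X$, I form the time-averages
\[
H^n_k \;=\; \frac{1}{t_k^n - t_{k-1}^n}\int_{t_{k-1}^n}^{t_k^n} X_s\,ds, \qquad k=1,\ldots,m_n,
\]
each of which is $^*\!\F^X_{t_k^n}$-measurable. The piecewise-constant process defined by $\bar X^n_t = H^n_k$ on $[t_k^n, t_{k+1}^n)$ and $\bar X^n_t \equiv x_0$ on $[0, t_1^n)$ (for any fixed $x_0 \in \X$) converges to $X$ in $M(\R_+;\X)$ almost surely, by Lebesgue's differentiation theorem applied at almost every $t$. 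By $(ii)$ at each $t_k^n \in \T$, the discrete skeleton $(H^n_1,\ldots,H^n_{m_n})$ is compatible with $Z$ relative to the filtration $(\F_{t_k^n})$; the nonatomicity requirement of Theorem \ref{th:density-discrete} is met since $\F_{t_1^n}$ is nonatomic (Assumption~\hyperref[ass:A]{(A)}.(c), as $t_1^n > 0$). Theorem \ref{th:density-discrete} then produces continuous $f^{n,i}_k : \U_{t_k^n} \to \X$ with $(Z, f^{n,i}_1(U_{t_1^n}),\ldots,f^{n,i}_{m_n}(U_{t_{m_n}^n})) \Rightarrow (Z, H^n_1, \ldots, H^n_{m_n})$ as $i \to \infty$. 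Setting $X^{n,i}_t = f^{n,i}_k(U_{t_k^n})$ on $[t_k^n, t_{k+1}^n)$ gives $\FF$-adapted, piecewise-constant processes of the desired form (modulo a cosmetic shift of partition labels to match the convention $X^n_{t_k} = f_k(U_{t_{k-1}})$), and a diagonal extraction in $n, i$ concludes.

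The main obstacle I anticipate is technical: rigorously defining $H^n_k$ as a genuinely $\X$-valued object when $\X$ is only homeomorphic to a \emph{convex}---not necessarily closed---subset of a locally convex space. A Bochner/Pettis integral lies a priori in the closure of the convex hull of the essential range, so some care is needed (working on the closure and then checking one stays in $\X$, or else replacing averaging by a coordinatewise approximation built from scalar integrals against a separating countable family of bounded continuous real-valued functions). A secondary, more routine point is to verify, uniformly in the partition, that the one-sided averages $H^n_k$ converge to $X_t$ at almost every $t \in [t_k^n,t_{k+1}^n)$, which is the standard Lebesgue-differentiation statement for intervals of bounded eccentricity relative to $t$.
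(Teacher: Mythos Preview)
Your proposal is correct and follows essentially the same cycle $(i)\Rightarrow(ii)\Rightarrow(iv)\Rightarrow(iii)\Rightarrow(i)$ as the paper. Your treatment of $(iii)\Rightarrow(i)$ matches the paper's, including the observation that the restriction $M(\R_+;\X)\to M([0,t];\X)$ is continuous for every $t$ (so the Skorokhod-type obstruction from Theorem~\ref{th:density-cadlag} disappears) and your explanation of why $t=0$ is harmless---the paper makes the equivalent remark that one may take $H_0$ deterministic, since $M(\R_+;\X)$ ignores null sets.

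The only divergence is in $(ii)\Rightarrow(iv)$: the paper simply writes ``by an approximation, we can reduce to the case where $X$ is piecewise constant'' and proceeds, whereas you supply an explicit mechanism (time averages $H^n_k$) to produce a piecewise-constant approximation whose step values are $^*\!\F^X_{t_k}$-measurable. Your mechanism is more honest about what is actually needed---namely, that the step values inherit the compatibility from (ii)---but as you correctly flag, it imports a genuine technicality: the Bochner/Pettis integral of an $\X$-valued path is only guaranteed to land in the \emph{closure} of $\X$ in the ambient locally convex space, and Lebesgue differentiation for such integrals is not automatic in this generality. These are real issues in full generality (though harmless in the typical case where $\X$ is a closed convex subset of a separable Banach space). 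One way to sidestep them entirely, staying within the paper's toolkit, is to use the predictable representative of $X$ mentioned in the remark preceding the theorem and build the piecewise-constant approximation from it; this avoids any integration in $\X$. Either way, the structure of your argument is the paper's.
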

\begin{proof}
Clearly (i) implies (ii) and (iv) implies (iii).
We begin by proving that (ii) implies (iii), or (iv) under the additional assumption. The proof is very similar to that of Theorems \ref{th:density-continuous} and \ref{th:density-cadlag}, so we mention only the different steps. By an approximation, we can reduce to the case where $X$ is piecewise constant. Our generic piecewise constant process $X$ can now be assumed to be of the form
\[
X_t = H_01_{\{0\}}(t) + \sum_{k=0}^{m-1}H_k1_{(t_{k},t_{k+1}]}(t),
\]
where $0 = t_0 < t_1 < \cdots < t_m$ are deterministic, with $t_i \in \T$ for each $i$. Note that we choose $X$ to be left-continuous so that it is predictable. Now, because the topology of $M(\R_+;\X)$ is not sensitive to pointwise changes, we may take $H_0$ to be deterministic; that is, we may assume $X_t$ is constant and deterministic on a neighborhood of the origin.
For this reason, we do not need Assumption \hyperref[ass:C]{(C)}, and we apply Theorem \ref{th:intro:density-discrete} as in the previous two proofs to find a sequence $H^n=(H^n_k)_{k=1}^{m-1}$ of $(\F_{t_k})_{k=1}^{m-1}$-adapted processes such that
\[
\left(Z,H^n_1,\ldots,H^n_{m-1}\right) \Rightarrow \left(Z,H_1,\ldots,H_{m-1}\right),
\]
in $\Z \times \X^{m-1}$. Set $H^n_0=H_0$ and define
\[
X^n_t = H_01_{\{0\}}(t) + \sum_{k=0}^{m-1}H^n_k1_{(t_{k},t_{k+1}]}(t).
\]
Conclude that $(X^n,Z) \Rightarrow (X,Z)$ in $M(\R_+;\X) \times \Z$.

The proof that (iii) implies (i) is exactly the same as the last paragraph of the proof of Theorem \ref{th:density-continuous}. Note that the restriction map $x \mapsto x|_{[0,t]}$ is continuous from $M(\R_+;\X)$ to $M([0,t];\X)$, so we run into none problems that appeared in the analogous step of Theorem \ref{th:density-cadlag}.
\end{proof}

\subsection{On the case of two fixed marginals} \label{se:2marginals-dynamic}

Comparing Theorems \ref{th:intro:density-discrete} and \ref{th:intro:density-continuous} in discrete and continuous time, respectively, it is natural to wonder if there is an analogue of the latter result in which the marginal law of the process $X$ is fixed. More precisely, in Theorem \ref{th:intro:density-continuous}, does the equivalence between (i) and (ii) still hold if we require in (ii) that $X^n \stackrel{d}{=} X$ for each $n$?
Such a result would be useful in the theory of causal optimal transport, to show in particular that the Monge and Kantorovich formulations of continuous-time causal transport problems are equal, as we discussed in discrete time in Proposition \ref{pr:causal-transport-attained}.
But this turns out to be impossible at this level of generality.

For example, suppose in the context of Theorem \ref{th:intro:density-continuous} that $Y$ is a standard one-dimensional Brownian motion, and $X$ is the process given by $X_t=tZ$ where $Z$ is some non-degenerate random variable. Note that condition (a) of Theorem \ref{th:intro:density-continuous} holds in this case. Then $X$ and $Y$ cannot be coupled in such a way that $X$ is adapted with respect to the (completion of the) filtration generated by $Y$. In other words, the set of causal Monge couplings is \emph{empty}. To see why, simply note that the $\sigma$-field $\F^Y_{0+}:= \cap_{t > 0}\F^Y_t$ is trivial  by Blumenthal's zero-one law, whereas $\F^X_{0+}=\sigma(Z)$ is non-trivial.

The same idea leads to a more general class of examples: The set of causal Monge couplings is empty whenever $Y$ is a strong Markov process (whose filtration thus satisfies Blumenthal's zero-one law) and $X$ is a process for which $\F^X_{0+}$ is non-trivial.

The fundamental difficulty stems, it seems, from the wide variety of different filtrations which exist in continuous time, compared to the static case or to finite discrete time.
The static result, Proposition \ref{pr:density-two-marginals}, works in part because all nonatomic Polish probability spaces are Borel isomorphic.
Analogously, the ``most general" finite discrete-time filtrations are those generated by a sequence of independent non-atomic random variables, as indicated by the transfer principle of Lemma \ref{le:transfer-dynamic} below. Continuous-time filtrations, on the other hand, are not so neatly classified.  See \cite{tsirelson1997triple,dubins1996decreasing,emery2001vershik} for references on the rich and mysterious structural theory of filtrations, including plenty of non-trivial examples of non-isomorphic filtrations.

There is, however, one remarkable positive result of this nature in continuous time, due to \'Emery \cite{emery2005certain}. Proposition 2 of \cite{emery2005certain} shows the following: If $X$ and $Y$ are $\FF$-Brownian motions of the same dimension on some filtered probability space $(\Omega,\F,\FF,\PP)$, then there exists a sequence $(X^n)$ of Brownian motions such that $\FF^{X^n}=\FF^Y$ up to null sets for each $n$ and $(X^n,Y) \Rightarrow (X,Y)$. In other words, any joint Brownian motions can be approximated weakly in law by mutually adapted Brownian motions. The proof in \cite{emery2005certain} does not seem to generalize, as it relies heavily on the characterization of a centered multivariate Gaussian distribution by its covariance matrix.

\subsection{On the notion of compatibility} \label{se:general-compatibility}

This section has made frequent mention of a \emph{compatibility} property, which we discuss here in more generality. There is an ever-growing list of interesting equivalent conditions, some of which are summarized in the following:

\begin{theorem}[Theorem 3 of \cite{bremaudyor-changesoffiltrations}, Theorem 2 of \cite{aksamit2016projections}] \label{th:compatible}
On some probability space, consider two filtrations $\GG$ and $\FF$ with $\FF \subset \GG$. The following are equivalent:
\begin{enumerate}[(i)]
\item $\G_t$ is conditionally independent of $\F_\infty$ given $\F_t$, for every $t$.
\item Every bounded $\FF$-martingale is a $\GG$-martingale.
\item Every $\GG$-stopping time $\tau$ is a \emph{$\FF$-pseudo stopping time}, meaning $\E[M_\tau]=\E[M_0]$ for every uniformly integrable $\FF$-martingale.
\item For every $t$ and every integrable $\F_\infty$-measurable  $X$, $\E[X | \F_t] = \E[X | \G_t]$ a.s.
\item For every $t$ and every integrable $\G_t$-measurable  $X$, $\E[X | \F_t] = \E[X | \F_\infty]$ a.s.
\end{enumerate}
\end{theorem}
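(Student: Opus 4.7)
My plan is to treat (i), (iv), (v) as three symmetric reformulations of the conditional independence of $\G_t$ and $\F_\infty$ given $\F_t$, to bridge to (ii) via standard bounded martingale convergence, and finally to relate (ii) and (iii) through optional sampling together with an elementary two-valued stopping time. For the equivalence of (i), (iv), (v), I will invoke the standard fact that two sub-$\sigma$-fields $\mathcal{A}$ and $\mathcal{C}$ are conditionally independent given $\mathcal{B}$ if and only if $\E[X\mid \mathcal{B}\vee\mathcal{A}] = \E[X\mid\mathcal{B}]$ a.s.\ for every bounded $\mathcal{C}$-measurable $X$, a condition visibly symmetric in $\mathcal{A}$ and $\mathcal{C}$. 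Taking $\mathcal{A}=\G_t$, $\mathcal{B}=\F_t$ (so $\mathcal{B}\vee\mathcal{A}=\G_t$ since $\F_t\subset\G_t$), and $\mathcal{C}=\F_\infty$, the two symmetric forms become precisely (iv) and (v), each equivalent to (i); a truncation argument extends from bounded to integrable $X$.

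Next I turn to (ii) $\Leftrightarrow$ (iv). For $(\Leftarrow)$, any bounded $\FF$-martingale $M$ has $M_s$ bounded and $\F_\infty$-measurable for each $s$, so applying (iv) yields $\E[M_s\mid\G_t] = \E[M_s\mid\F_t] = M_t$ for $s\ge t$, showing that $M$ is a $\GG$-martingale ($M_t$ is automatically $\G_t$-measurable since $\F_t\subset\G_t$). For $(\Rightarrow)$, given bounded $\F_\infty$-measurable $X$, the process $M_t:=\E[X\mid\F_t]$ is a bounded $\FF$-martingale, hence a $\GG$-martingale by (ii); the $L^1$-convergence $M_s\to X$ as $s\to\infty$ together with $\E[M_s\mid\G_t]=M_t$ forces $\E[X\mid\G_t]=M_t=\E[X\mid\F_t]$, and truncation extends this to integrable $X$.

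Finally, for (ii) $\Leftrightarrow$ (iii), the forward direction is quick: using the equivalence with (iv), any uniformly integrable $\FF$-martingale $M$ with terminal $M_\infty$ satisfies $M_t = \E[M_\infty\mid\F_t] = \E[M_\infty\mid\G_t]$ for all $t$, so $M$ is a uniformly integrable $\GG$-martingale, and optional sampling at any $\GG$-stopping time $\tau$ gives $\E[M_\tau]=\E[M_\infty]=\E[M_0]$. The reverse direction is the most delicate step and is where I expect the only real obstacle: given (iii), I fix a bounded $\FF$-martingale $M$, times $t<s$, and a set $A\in\G_t$, and consider the two-valued random time
\[
\tau := t\mathbf{1}_A + s\mathbf{1}_{A^c},
\]
which is readily seen to be a $\GG$-stopping time, since $\{\tau\le r\}$ is $\varnothing$, $A$, or $\Omega$ according as $r<t$, $t\le r<s$, or $r\ge s$, and $A\in\G_t\subset\G_r$ for $r\ge t$. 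Applying (iii) to the uniformly integrable (bounded) $M$ at this $\tau$ yields $\E[M_t\mathbf{1}_A] + \E[M_s\mathbf{1}_{A^c}] = \E[M_0] = \E[M_s]$, from which $\E[M_t\mathbf{1}_A]=\E[M_s\mathbf{1}_A]$, i.e., the $\GG$-martingale property. The cleverness here is that the two-valued $\tau$ probes arbitrary $\G_t$-sets while sidestepping any need for path regularity of $M$ at random times, as $M_\tau$ depends only on the values at the two fixed times $t$ and $s$.
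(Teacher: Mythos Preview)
The paper does not give its own proof of this theorem: it is stated with attribution to Br\'emaud--Yor and Aksamit, and the only commentary is that ``the last two statements are easily seen to be restatements of (i),'' with (iii) credited to \cite{aksamit2016projections}. Your argument is correct and self-contained, and the part you call the ``symmetric reformulations'' matches the paper's brief remark on (iv) and (v). Your use of the two-valued $\GG$-stopping time $\tau = t\mathbf{1}_A + s\mathbf{1}_{A^c}$ to pass from (iii) back to (ii) is exactly the right device and, as you note, avoids any path-regularity issues for $M_\tau$; this is essentially the standard trick (present already in Nikeghbali--Yor's work on pseudo-stopping times), so there is nothing to object to. In short: your proof fills in what the paper leaves to citations, and does so cleanly.
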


The last two statements are easily seen to be restatements of (i). 
The condition (ii) is known as the \emph{H-hypothesis} in the filtering literature \cite{bremaudyor-changesoffiltrations}, or often as the \emph{immersion property} in the literature on progressive enlargements of filtration \cite{jeanblanc2009immersion}, whereas (iii) is a recent result of \cite{aksamit2016projections}.

There is a natural additional characterization available in the case where $\FF=\FF^Y$ is the (unaugmented) filtration generated by a c\`adl\`ag process $Y$ with independent increments and taking values in, say, a separable Fr\'echet space. In this case, the conditions (i-v) are equivalent to the following:
\begin{enumerate}
\item[(vi)] The increments of $Y$ are independent with respect to the larger filtration $\GG$. That is, $(Y_t-Y_s)_{t \ge s}$ is independent of $\G_s$ for every $s \ge 0$.
\end{enumerate}
This is most easily checked to be equivalent to property (v), by using the fact that $\F_\infty$ decomposes as $\F_\infty = \F_t \vee \sigma(Y_s - Y_t : s \ge t)$ for each $t \ge 0$.

In a sense, the notion of compatibility appeared in the work of Yamada and Watanabe \cite{yamada-watanabe}, which illustrates how it arises naturally with weak solutions of stochastic differential equations. In this context, one simply needs the driving Brownian motion to remain Brownian relative to a larger filtration which includes the solution process.
Similarly, the work of Jacod and M{\'e}min \cite{jacodmemin-weaksolution} on semimartingale-driven stochastic differential equations crucially uses compatibility, which they referred to as \emph{very good extension}. See Kurtz \cite{kurtz-yw2013} for a recent elaboration of this theme. In the study of existence of (stochastic) optimal controls, El Karoui et al. \cite{elkaroui-compactification} make use of this notion under the name of \emph{natural extension}.

\section{Randomized stopping times} \label{se:randomized-stopping}

We next turn to analogues of Theorems \ref{th:density-continuous} and \ref{th:density-cadlag} in which the process $X$  is replaced with a stopping time.
Given a random time $\tau$ (i.e., a $[0,\infty]$-valued random variable), its natural filtration $\FF^\tau=(\F^\tau_t)_{t \ge 0}$ is defined by
\[
\F^\tau_t = \cap_{s > t}\sigma(\tau \wedge s) = \sigma(\{\tau \le s\} : s \le t).
\]
Note that $\FF^\tau$ is right-continuous, and also that $\F^\tau_\infty = \sigma(\tau)$. Given a filtration $\FF=(\F_t)_{t \ge 0}$, we say that a random time $\tau$ is an $\FF$-stopping time if, as usual, $\{\tau \le t\} \in \F_t$ for every $t$. Given also a random variable $Z$ with $\F_t \subset \sigma(Z)$ for all $t \ge 0$, we say that a random time $\tau$ is an \emph{$(\FF,Z)$-randomized stopping time} if $\F^\tau_t$ is conditionally independent of $Z$ given $\F_t$, for every $t \ge 0$. The following are easily seen to be equivalent:
\begin{enumerate}
\item $\tau$ is an $(\FF,Z)$-randomized stopping time.
\item $\PP(\tau \le t | Z) = \PP(\tau \le t | \F_t)$ a.s., for every $t \ge 0$.
\end{enumerate}
As an important special case, suppose that the random variable $Z$ satisfies $\sigma(Z) = \F_\infty$. The we shorten ``$(\FF,Z)$-randomized stopping time'' to ``$\FF$-randomized stopping time,'' meaning one of the following equivalent conditions holds:
\begin{enumerate}
\item $\F^\tau_t$ is conditionally independent of $\F_\infty$ given $\F_t$, for every $t \ge 0$.
\item $\PP(\tau \le t | \F_\infty) = \PP(\tau \le t | \F_t)$ a.s., for every $t \ge 0$.
\item Every $\FF$-martingale is an $\FF \vee \FF^\tau$-martingale.
\end{enumerate}

The following extends \cite[Theorem 6.4]{carmonadelaruelacker-mfgoftiming}:

\begin{theorem} \label{th:density-stopping}
Under Assumptions \hyperref[ass:A]{(A)}(1-2) and \hyperref[ass:B]{(B)}, consider the following statements:
\begin{enumerate}[(i)]
\item $\tau$ is an $(\FF,Z)$-randomized stopping time.
\item There exists a sequence of $\FF$-stopping times $\tau_n$ such that $(Z,\tau_n) \Rightarrow (Z,\tau)$ in $\Z \times [0,\infty]$.
\item There exists a sequence of $\FF_+$-stopping times $\tau_n$ such that $(Z,\tau_n) \Rightarrow (Z,\tau)$ in $\Z \times [0,\infty]$.
\item $\tau$ is an $(\FF_+,Z)$-randomized stopping time.
\end{enumerate}
Then the implications $(i) \Rightarrow (ii) \Leftrightarrow (iii) \Leftrightarrow (iv)$ hold.
If the completion of $\FF$ is right-continuous, then (i-iv) are equivalent.
\end{theorem}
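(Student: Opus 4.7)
My plan is to establish the implications in the order: $(ii) \Rightarrow (iii)$ (trivial), the main density $(i) \Rightarrow (ii)$, the closure $(iii) \Rightarrow (iv)$, the reverse $(iv) \Rightarrow (iii)$ by bootstrapping the density result to $\FF_+$, and finally the sharpening under right-continuity of the completion of $\FF$.

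For the main density $(i) \Rightarrow (ii)$, I would encode $\tau$ as the c\`adl\`ag process $X_t = 1_{\{\tau \le t\}}$, which takes values in the convex set $[0,1]$ and satisfies $\F^X_t = \F^\tau_t$. Under (i), $X$ is compatible with $\FF$ in the sense of Theorem \ref{th:density-cadlag}. Assumption C (not among the present hypotheses) can be sidestepped by first discretizing via $\tau^n := \lceil n\tau \rceil/n$, still an $(\FF, Z)$-randomized stopping time (since $\{\tau^n \le k/n\} = \{\tau \le k/n\}$), and after a vanishing perturbation assumed to satisfy $\PP(\tau^n = 0) = 0$, making $X^n_0 \equiv 0$ deterministic. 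A diagonal argument reduces the problem to approximating each $\tau^n$ by $\FF$-stopping times. Theorem \ref{th:density-cadlag}(iv) applied to $1_{\{\tau^n \le \cdot\}}$ then produces $\FF$-adapted piecewise constant $H^m_k = f^m_k(U_{k/n}) \in [0,1]$ jointly converging with $Z$; passing to running maxima (which remain $\F_{k/n}$-measurable and preserve the weak limit since the limit is itself increasing) I may assume the $H^m_k$ are increasing in $k$.

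The central obstacle is to convert the $[0,1]$-valued adapted processes $H^m$ into genuine $\FF$-stopping times $\sigma^m$ with the correct weak limit. My approach is a hazard-rate construction: with ``fresh'' conditional uniforms $V_k \in [0,1]$ at time $k/n$, set
\[
\sigma^m := \inf\bigl\{k/n \,:\, V_k\,(1 - H^m_{k-1}) \le H^m_k - H^m_{k-1}\bigr\},
\]
so that $\PP(\sigma^m \le k/n \mid \F_\infty) = H^m_k$ whenever $V_k$ is conditionally independent of $H^m$ given the past. The existence of such $V_k$ is the delicate point: it requires each $\F_{k/n}$ to be a genuinely nonatomic extension of $\F_{(k-1)/n}$, which can be arranged by Borel isomorphism using Assumption A(2)(c), either through an initial enlargement of the probability space by independent uniforms (later absorbed into $\FF$ via nonatomicity) or through a careful factorization of $U_{k/n}$ given by Assumption B. Either way one obtains $(Z, \sigma^m) \Rightarrow (Z, \tau^n)$, and a final diagonal in $(m, n)$ yields $(Z, \sigma^m) \Rightarrow (Z, \tau)$.

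The remaining implications are routine. For $(iii) \Rightarrow (iv)$, each $\FF_+$-stopping time $\tau_n$ trivially satisfies
\[
\E\bigl[\phi(\tau_n \wedge t)\,(g(Z) - \E[g(Z) \mid \F_{t+}])\bigr] = 0
\]
for bounded continuous $\phi : [0,\infty] \to \R$ and bounded measurable $g$, and Lemma \ref{le:stable-convergence} lets me pass to the limit at any $t$ with $\PP(\tau = t) = 0$; Lemma \ref{le:compatibility-rightcontinuous} then promotes conditional independence at this dense set to all $t$, giving (iv). For $(iv) \Rightarrow (iii)$, I apply the already-proved $(i) \Rightarrow (ii)$ with $\FF$ replaced by $\FF_+$, which still satisfies Assumptions A and B (the nonatomicity and standard-Borel structure are inherited). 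Finally, when $\overline\FF$ is right-continuous, $\F_{t+}$ and $\F_t$ coincide modulo $\PP$-null sets, so (iv) reduces to (i) and all five statements are equivalent.
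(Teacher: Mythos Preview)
Your overall architecture matches the paper's: encode $\tau$ as the indicator process $H_t = 1_{\{\tau\le t\}}$, invoke Theorem~\ref{th:density-cadlag} to produce $\FF$-adapted $[0,1]$-valued approximants $H^m$, and then convert these into genuine $\FF$-stopping times. The difficulty is in this last conversion, and your hazard-rate proposal has a real gap.

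The construction of the ``fresh conditional uniforms'' $V_k$ is not supported by the hypotheses. Assumption~(A)(2)(c) says only that each $(\Omega,\F_t,\PP)$ is nonatomic; it does \emph{not} say that $\F_{k/n}$ is a nonatomic extension of $\F_{(k-1)/n}$. Concretely, the filtration could be constant on $[1/n,\infty)$ (take $\F_t=\F_{1/n}$ for all $t\ge 1/n$): then any $\F_{k/n}$-measurable $V_k$ is already $\F_{(k-1)/n}$-measurable, so it cannot be uniform and conditionally independent of $H^m$ given $\F_{(k-1)/n}$ unless it is constant. Your suggested fix of enlarging the probability space by independent uniforms and ``later absorbing'' them into $\FF$ via nonatomicity does not work either, for the same reason: nonatomicity of $\F_t$ alone does not let you embed an extra independent uniform into the \emph{increment} $\F_t$ over $\F_s$. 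Even granting the $V_k$, the claim $\PP(\sigma^m\le k/n\mid \F_\infty)=H^m_k$ is not what one obtains: since $\sigma^m$ is $\F_\infty$-measurable this would force $H^m_k\in\{0,1\}$, so the statement needs reformulation before the joint-law convergence can be argued.

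The paper avoids all of this with a one-line device: define $\Phi(h)=\inf\{t\ge 0: h(t)\ge 1/2\}$ and set $\tau_m:=\Phi(H^m)$. Then $\tau_m$ is automatically an $\FF$-stopping time (right-continuity of $H^m$), and a short lemma shows $\Phi$ is continuous at every path of the form $1_{[s,\infty)}$. Hence $(Z,\tau_m)=(Z,\Phi(H^m))\Rightarrow(Z,\Phi(H))=(Z,\tau)$ directly from $(Z,H^m)\Rightarrow(Z,H)$ and Lemma~\ref{le:stable-convergence}. No randomization, no fresh uniforms.

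A smaller point: for $(iv)\Rightarrow(iii)$ you propose to rerun $(i)\Rightarrow(ii)$ with $\FF$ replaced by $\FF_+$, but Assumption~(A)(2)(b) (standard Borel) need not pass to $\F_{t+}=\bigcap_{s>t}\F_s$. The paper instead shifts: $\tau_n:=\tau+1/n$ is an $(\FF,Z)$-randomized stopping time whenever $\tau$ is an $(\FF_+,Z)$-randomized stopping time, so one reduces to the already-proved $(i)\Rightarrow(iii)$ for the original $\FF$.
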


\begin{remark}
Recalling the setting of Example \ref{ex:Ycadlag}, we deduce Theorem \ref{th:intro:density-stopping} from Theorem \ref{th:density-stopping} by taking $\Z = D(\R_+;\Y)$  and $Z=Y$.
\end{remark}

The proof is somewhat involved, so we prepare with a series of lemmas.

\begin{lemma} \label{le:continuousae}
For every bounded $\F^\tau_t$-measurable random variable $H$, there exits a sequence of uniformly bounded functions $g_n : [0,\infty] \rightarrow \R$ such that $g_n(\tau) \rightarrow H$ in $L^1$, $g_n(\tau)$ is $\F^\tau_t$-measurable, and $g_n$ is continuous at every point but $t$.
\end{lemma}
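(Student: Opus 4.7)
The key structural observation is that $\F^\tau_t = \sigma(\{\tau \le s\} : s \le t)$ captures $\tau$ precisely on $\{\tau \le t\}$ but treats $\{\tau > t\}$ as a single atom. Concretely, I would write $H = \phi(\tau \wedge t, 1_{\{\tau \le t\}})$ for some bounded Borel $\phi$; on $\{\tau > t\}$ this forces $H = \phi(t,0) =: c$, a single constant, while on $\{\tau \le t\}$ we have $H = \phi(\tau, 1)$. Setting $g(s) = \phi(s,1)$ for $s \in [0,t]$ and $g(s) = c$ for $s \in (t, \infty]$, we obtain a bounded Borel function $g : [0,\infty] \to \R$ with $\|g\|_\infty \le \|\phi\|_\infty$, such that $g(\tau) = H$ a.s.\ and $g \equiv c$ on all of $(t,\infty]$.

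Next I would approximate $g|_{[0,t]}$ in $L^1(\mu)$, where $\mu$ is the restriction of the law of $\tau$ to $[0,t]$. Since continuous functions on the compact set $[0,t]$ are dense in $L^1(\mu)$ (apply Lusin's theorem, or simply use the standard mollification argument), there exist continuous $g_n^0 : [0,t] \to \R$ with $g_n^0 \to g$ in $L^1(\mu)$. Replacing $g_n^0$ by $(g_n^0 \wedge \|g\|_\infty) \vee (-\|g\|_\infty)$ preserves continuity and $L^1$ convergence while enforcing a uniform bound. Then I extend to $[0,\infty]$ by setting $g_n(s) := g_n^0(s)$ for $s \in [0,t]$ and $g_n(s) := c$ for $s \in (t,\infty]$.

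It remains to verify the three conclusions. Uniform boundedness by $\|g\|_\infty$ is immediate since $|c| \le \|g\|_\infty$. For continuity: at any $s \in [0,t)$ the function $g_n$ coincides with the continuous $g_n^0$ on a neighborhood; at any $s \in (t,\infty]$ it is locally constant equal to $c$; only at $s = t$ can $g_n^0(t)$ and $c$ disagree, producing a possible jump. For $\F^\tau_t$-measurability:
\[
g_n(\tau) = g_n^0(\tau) 1_{\{\tau \le t\}} + c\, 1_{\{\tau > t\}},
\]
where the first term is $\sigma(\tau \wedge t, 1_{\{\tau \le t\}})$-measurable and the second is $\sigma(1_{\{\tau \le t\}})$-measurable. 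Finally, $\E|g_n(\tau) - H|$ equals $\int_{[0,t]}|g_n^0 - g|\,d\mu$, which tends to $0$ by construction.

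There is no real obstacle, only a bit of bookkeeping; the heart of the matter is isolating the constant value $c$ on $(t,\infty]$ before applying Lusin on $[0,t]$, which is exactly what ensures the approximations are discontinuous only at $t$ and remain $\F^\tau_t$-measurable when composed with $\tau$.
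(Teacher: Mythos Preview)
Your proposal is correct and follows essentially the same approach as the paper: decompose $H$ as $h(\tau)1_{\{\tau\le t\}} + c\,1_{\{\tau>t\}}$, approximate $h$ on $[0,t]$ by continuous functions, and glue with the constant $c$ on $(t,\infty]$. Your version is in fact slightly more careful than the paper's, since you specify that the $L^1$ approximation is taken with respect to the law of $\tau$ restricted to $[0,t]$ (which is what is actually needed to deduce $g_n(\tau)\to H$ in $L^1(\PP)$) and you explicitly truncate to enforce uniform boundedness.
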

\begin{proof}
First notice that being $\F^\tau_t$-measurable, $H$ is necessarily of the form:
\[
H = h(\tau)1_{[0,t]}(\tau) + c1_{(t,\infty]}(\tau)
\]
for some bounded measurable function $h : [0,t] \rightarrow \R$ and some $c \in \R$.  Now simply approximate $h$ in $L^1[0,t]$ by continuous functions $h_n : [0,t] \rightarrow \R$, and define
\[
g_n(s) = h_n(s)1_{[0,t]}(s) + c1_{(t,\infty]}(s).
\]
\end{proof}

\begin{lemma} \label{le:time-map}
Define a map $\Phi : D(\R_+;\{0,1\}) \rightarrow [0,\infty]$ by
\[
\Phi(h) = \inf\{t \ge 0 : h(t) =1\}.
\]
Then $\Phi$ is continuous at every point $h$ of the form $h(\cdot) = 1_{[s,\infty)}(\cdot)$, where $s \ge 0$.
\end{lemma}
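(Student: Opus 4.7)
The plan is to invoke the standard characterization of Skorokhod $J_1$-convergence on $D(\R_+;[0,1])$: since $h = 1_{[s,\infty)}$ has at most one discontinuity (at $s$, when $s>0$), for any fixed $T>s$ the restrictions $h_n|_{[0,T]} \to h|_{[0,T]}$ in $D([0,T];[0,1])$, so there exist strictly increasing homeomorphisms $\lambda_n : [0,T] \to [0,T]$ with $\lambda_n(0)=0$, $\sup_{t \in [0,T]}|\lambda_n(t) - t| \to 0$, and $\sup_{t \in [0,T]}|h_n(\lambda_n(t)) - h(t)| \to 0$. The entire argument will be carried out with these $\lambda_n$.

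Assume first $s > 0$. For the upper bound $\limsup_n \Phi(h_n) \le s$, I would apply the uniform convergence at the single point $t=s$: since $h(s)=1$, we get $h_n(\lambda_n(s)) \ge 1/2$ for all large $n$, whence $\Phi(h_n) \le \lambda_n(s) \to s$. For the lower bound, fix $\epsilon \in (0,s)$. Since $h \equiv 0$ on $[0, s - \epsilon]$, the uniform convergence yields $h_n(\lambda_n(t)) < 1/2$ for every $t \in [0, s-\epsilon]$ and every large $n$; as $\lambda_n$ is an order-preserving homeomorphism fixing $0$, this says $h_n(u) < 1/2$ for all $u \in [0, \lambda_n(s-\epsilon)]$, so $\Phi(h_n) \ge \lambda_n(s-\epsilon) \to s - \epsilon$. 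Letting $\epsilon \downarrow 0$ gives $\liminf_n \Phi(h_n) \ge s$, completing this case.

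The remaining case $s=0$ is immediate: $h \equiv 1$ is continuous, so $J_1$-convergence degenerates to uniform convergence on compacts, whence $h_n(0) \to 1$ and $\Phi(h_n) = 0$ for all large $n$. There is no genuine obstacle in the argument; the only care needed is to separate the degenerate case $s = 0$ (where the standard time-change picture is vacuous) from the main one, and to remember that Skorokhod convergence on $\R_+$ is defined via restrictions to intervals $[0,T]$ with $T$ a continuity point of the limit, which is exactly why the simple structure of $h = 1_{[s,\infty)}$ makes things work.
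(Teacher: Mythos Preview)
Your argument is correct and complete; the separation into the cases $s>0$ and $s=0$ is handled cleanly, and the time-change characterization of $J_1$-convergence does exactly the work you need. The paper's proof takes a genuinely different route: rather than invoking the homeomorphisms $\lambda_n$, it argues by subsequences, assuming $\Phi(h^n)\to t$ along a subsequence and then applying a general fact about Skorokhod convergence (\cite[Proposition~3.6.5]{ethier-kurtz}) which says that if $h^n\to h$ in $D(\R_+;\X)$ and $t_n\to t$ then the cluster points of $\{h^n(t_n)\}$ lie in $\{h(t-),h(t)\}$. Evaluating at $t_n=\Phi(h^n)$ and at $t_n=\Phi(h^n)-\epsilon$ then yields $t\ge s$ and $t\le s+\epsilon$ respectively. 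Your approach is more elementary and self-contained, exploiting directly the simple structure of the limit $1_{[s,\infty)}$; the paper's approach trades that explicitness for a black-box lemma that would adapt more readily to other limit functions $h$ or to thresholds other than $1/2$.
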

\begin{proof}
First assume $h(t) = 1_{[s,\infty)}(t)$ for some fixed $s \ge 0$, and let $h^n \in D(\R_+;\{0,1\})$ with $h^n \rightarrow h$. Note that $\Phi(h)=s$. It is straightforward to check that $\{\Phi(h^n) : n \ge 1\}$ is bounded. Suppose, along a subsequence, that $\Phi(h^n) \rightarrow t$, for some $t \ge 0$. According to \cite[Proposition 3.6.5]{ethier-kurtz}
the set $\{h^n(\Phi(h^n)) : n \ge 1\}$ is precompact, and the set of limit points is contained in $\{h(t-), h(t)\}$. As $h(t) \ge h(t-)$ and $h^n(\Phi(h^n)) =1$, we must have $h(t)=1$, or equivalently $t \ge s$.  Similarly, for each $\epsilon > 0$, the set $\{h^n(\Phi(h^n) - \epsilon) : n \ge 1\}$ is precompact, and the set of limit points is contained in $\{h((t-\epsilon)-), h(t-\epsilon)\}$. But $h^n(\Phi(h^n)-\epsilon) =0$, and we conclude that $h((t-\epsilon)-) = 0$ or equivalently $s \ge t-\epsilon$. Thus $\Phi(h^n) \rightarrow s = \Phi(h)$.
\end{proof}

As a final preparation, we show how any $(\FF_+,Z)$-randomized stopping time can be approximated in a suitable sense by $(\FF,Z)$-randomized stopping times. First, we recall the well known non-randomized analogue:

\begin{lemma}[Corollary IV.58 of \cite{dellacherie-meyer}] \label{le:nonrandomized-approximation}
Let $\FF$ be any filtration, and let $\tau$ be an $\FF_+$-stopping time. Then there exists a sequence of $\FF$-stopping times such that $\tau_n \downarrow \tau$ a.s.
\end{lemma}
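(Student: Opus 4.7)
The plan is a dyadic rounding-up approximation. For each $n \ge 1$, I would define
\[
\tau_n = \sum_{k=0}^{\infty} \frac{k+1}{2^n} 1_{\{k/2^n \le \tau < (k+1)/2^n\}} + \infty \cdot 1_{\{\tau = \infty\}},
\]
so that on $\{\tau < \infty\}$ we have $\tau \le \tau_n \le \tau + 2^{-n}$ and $\tau_n$ takes values in the dyadic set $D_n = \{(k+1)/2^n : k \ge 0\} \cup \{\infty\}$. The task then splits into three routine checks: (a) each $\tau_n$ is an $\FF$-stopping time; (b) the sequence is monotone in $n$; and (c) $\tau_n \downarrow \tau$.

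For (a), it is enough to verify $\{\tau_n \le s\} \in \F_s$ for $s = k/2^n$ with $k \ge 1$, since for general $t$ one has $\{\tau_n \le t\} = \{\tau_n \le \lfloor 2^n t \rfloor / 2^n\}$ by the discreteness of the range. By construction, $\{\tau_n \le k/2^n\} = \{\tau < k/2^n\}$. The key observation, and really the only content of the proof, is that an $\FF_+$-stopping time $\tau$ automatically satisfies $\{\tau < s\} \in \F_s$ for every $s > 0$: indeed
\[
\{\tau < s\} = \bigcup_{m=1}^{\infty}\{\tau \le s - 1/m\},
\]
and each event on the right lies in $\F_{(s-1/m)+} \subset \F_s$ because $\tau$ is an $\FF_+$-stopping time and $s - 1/m < s$.

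For (b), the dyadic grids nest: if $k/2^n \le \tau < (k+1)/2^n$, then at level $n+1$ the value $\tau_{n+1}$ equals either $(2k+1)/2^{n+1}$ or $(2k+2)/2^{n+1} = (k+1)/2^n$, both of which are $\le \tau_n$. For (c), the sandwich $\tau \le \tau_n \le \tau + 2^{-n}$ gives $\tau_n \downarrow \tau$ everywhere, not merely almost surely.

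There is no serious obstacle; the whole argument rests on the elementary identity $\{\tau < s\} = \bigcup_m \{\tau \le s - 1/m\}$ combined with right-continuity of $\FF_+$, which is precisely how right-continuity is used to improve $\F_{s+}$-measurability of $\{\tau \le s\}$ to $\F_s$-measurability of $\{\tau < s\}$. If one preferred a more abstract route, one could instead observe that $\tau_n$ is a measurable function of $\tau$ which is lower semicontinuous in $\tau$ and majorizes it, and then appeal directly to the defining property of $\FF_+$-stopping times, but the explicit construction above is both simpler and yields the monotone convergence needed for the corollary.
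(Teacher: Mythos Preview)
Your proof is correct, but it is considerably more elaborate than the paper's. The paper simply sets $\tau_n = \tau + 1/n$: then $\{\tau_n \le t\} = \{\tau \le t - 1/n\} \in \F_{(t-1/n)+} \subset \F_t$ (the set being empty for $t < 1/n$), and $\tau_n \downarrow \tau$ is immediate. Both arguments rest on the same elementary fact that $\F_{s+} \subset \F_t$ whenever $s < t$, but the paper exploits it in one stroke by a continuous shift rather than a dyadic rounding. Your construction does buy something extra, namely stopping times taking values in a discrete set, which is often useful elsewhere (e.g.\ in reducing continuous-time arguments to discrete-time ones); but since the lemma only asks for $\tau_n \downarrow \tau$, the paper's one-line shift suffices and is cleaner for this purpose.
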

\begin{proof}
Set $\tau_n = \tau + 1/n$.
\end{proof}

\begin{lemma} \label{le:randomized-approximation}
Let $\tau$ be an $(\FF_+,Z)$-randomized stopping time. There exists a sequence of $(\FF,Z)$-randomized stopping times $\tau_n$ such that $\tau_n \to \tau$ a.s.
\end{lemma}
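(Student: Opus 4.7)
The plan is to mimic Lemma \ref{le:nonrandomized-approximation} and simply set $\tau_n = \tau + 1/n$. The convergence $(Z,\tau_n) \Rightarrow (Z,\tau)$ is then essentially free: $\tau_n \downarrow \tau$ pointwise in $[0,\infty]$, so $(Z,\tau_n) \to (Z,\tau)$ almost surely in $\Z \times [0,\infty]$, which of course implies weak convergence. All the work is in verifying that each $\tau_n$ is an honest $(\FF,Z)$-randomized stopping time.

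First I would compute the natural filtration of the shifted time. For $t < 1/n$ the $\sigma$-field $\F^{\tau_n}_t$ is trivial, so conditional independence holds automatically; for $t \ge 1/n$,
\[
\F^{\tau_n}_t = \sigma(\{\tau_n \le s\} : s \le t) = \sigma(\{\tau \le s - 1/n\} : s \le t) = \F^\tau_{t-1/n}.
\]
Hence the task reduces to showing that $\F^\tau_{t-1/n}$ is conditionally independent of $\sigma(Z)$ given $\F_t$, starting from the hypothesis that $\F^\tau_{t-1/n}$ is conditionally independent of $\sigma(Z)$ given the strictly smaller $\sigma$-field $\F_{(t-1/n)+}$.

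The key step is to upgrade the conditioning $\sigma$-field using Assumption \hyperref[ass:A]{(A)(2a)}, namely $\F_t \subset \sigma(Z)$, which implies $\F_t \vee \sigma(Z) = \sigma(Z)$ and similarly $\F_{(t-1/n)+} \vee \sigma(Z) = \sigma(Z)$. Fix $A \in \F^\tau_{t-1/n}$. The $(\FF_+,Z)$-randomization of $\tau$ gives
\[
\PP(A \mid \sigma(Z)) = \PP(A \mid \F_{(t-1/n)+}),
\]
and the right-hand side is $\F_{(t-1/n)+}$-measurable, hence $\F_t$-measurable. Applying $\E[\,\cdot \mid \F_t]$ and using $\F_t \subset \sigma(Z)$ together with the tower property,
\[
\PP(A \mid \F_t) = \E[\PP(A\mid \sigma(Z)) \mid \F_t] = \PP(A \mid \F_{(t-1/n)+}) = \PP(A \mid \sigma(Z)),
\]
which is precisely the desired conditional independence of $\F^{\tau_n}_t = \F^\tau_{t-1/n}$ and $\sigma(Z)$ given $\F_t$.

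The only potential obstacle is bookkeeping on the two conditioning $\sigma$-fields, and this is dissolved by the ambient inclusion $\F_t \subset \sigma(Z)$, which turns both $\F_{(t-1/n)+} \vee \sigma(Z)$ and $\F_t \vee \sigma(Z)$ into $\sigma(Z)$. No further approximation, discretization, or appeal to Theorem \ref{th:density-stopping} (which would be circular) is needed.
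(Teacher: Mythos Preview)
Your proposal is correct and follows essentially the same approach as the paper: both set $\tau_n = \tau + 1/n$, and both derive the $(\FF,Z)$-randomized stopping property from the chain of inclusions $\F_{(t-1/n)+} \subset \F_t \subset \sigma(Z)$ together with the tower property. The only cosmetic differences are that the paper handles the case $t < 1/n$ by writing $(t-1/n)^+$ rather than splitting into cases, and it checks the equivalent condition $\PP(\tau_n \le t \mid Z) = \PP(\tau_n \le t \mid \F_t)$ directly rather than working with a general $A \in \F^{\tau_n}_t$.
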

\begin{proof}
Define $\tau_n = \tau + 1/n$. To see that $\tau_n$ is an $(\FF,Z)$-randomized stopping time, simply note that
\begin{align*}
\PP(\tau_n \le t | Z) &= \PP(\tau \le t -1/n| Z) = \PP(\tau \le t - 1/n | \F_{(t-1/n)^++}) \\
	&= \PP(\tau_n \le t | \F_{(t-1/n)^++}).
\end{align*}
Because $\F_{(t-1/n)^++} \subset \F_t \subset \sigma(Z)$, this implies $\PP(\tau_n \le t | Z) = \PP(\tau_n \le t | \F_t)$.
\end{proof}

\subsection*{Proof of Theorem \ref{th:density-stopping}}
Clearly (ii) implies (iii), because every $\FF$-stopping time is an $\FF_+$-stopping time. Moreover, every $\FF_+$ stopping time can be written as the decreasing (almost sure) limit of a sequence of $\FF$-stopping times (see Lemma \ref{le:nonrandomized-approximation}). Hence, (iii) implies (ii). To prove (iv) implies (iii), we can use Lemma \ref{le:randomized-approximation} to approximate our $(\FF_+,Z)$-randomized stopping time by $(\FF,Z)$-randomized stopping times, and we can then use the fact that (i) implies (iii), which we prove next.

To show that (i) implies (iii), fix an $(\FF,Z)$-randomized stopping time $\tau$. By Lemma \ref{le:randomized-approximation}, we may approximate $\tau$ a.s.\ by $(\FF,Z)$-randomized stopping times which almost surely do not take the value zero. Approximating then by $\tau \wedge n$, we may further assume that $\tau < \infty$ a.s. Hence, without loss of generality, we assume $\PP(\tau =0) = \PP(\tau =\infty)=0$. Define $H_t = 1_{\{\tau \le t\}}$, and note that $H$ is a c\`adl\`ag $\{0,1\}$-valued process with $\PP(H_0=0)=1$. Moreover, $\F^H_t = \F^\tau_t$ is conditionally independent of $\F_\infty$ given $\F_t$, for every $t \ge 0$.
We may now apply Theorem \ref{th:density-cadlag} to find a sequence of c\`adl\`ag $\FF$-adapted $\{0,1\}$-valued processes $H^n$ such that $(Z,H^n) \Rightarrow (Z,H)$ in $\Z \times D(\R_+;\{0,1\})$. Let $\tau_n = \inf\{t \ge 0 : H^n_t=1\}$. Then $\tau_n$ is an $\FF$-stopping time because $H^n$ is right-continuous and $\FF$-adapted. By Lemma \ref{le:time-map}, $H$ almost surely belongs to the set of continuity points of $\Phi$, and we conclude using Lemma \ref{le:stable-convergence} that
\[
(Z,\tau_n) = (Z,\Phi(H^n)) \Rightarrow (Z,\Phi(H)) = (Z,\tau), \text{ in } \Z \times [0,\infty].
\]

We next show that (iii) implies (iv). Let $\T = \{t \ge 0 : \PP(\tau = t) = 0\}$, and note that $\T$ is dense in $\R_+$. Fix $t \in \T$, and let $f_t=f_t(\tau)$ be a bounded $\F^\tau_t$-measurable random variable which is continuous at every point but $t$. Let $h_t(Z)$ be bounded and $\F_{t+}$-measurable, and let $g(Z)$ be bounded and $Z$-measurable. Because $\tau_n$ is an $\FF_+$ stopping time, for each $n$, we have
\begin{align*}
0 &= \E\left[f_t(\tau_n)h_t(Z)\left(g(Z) - \E[g(Z) | \F_{t+}]\right)\right].
\end{align*}
Use Lemma \ref{le:stable-convergence} to pass to the limit to get
\begin{align*}
0 &= \E\left[f_t(\tau)h_t(Z)\left(g(Z) - \E[g(Z) | \F_{t+}]\right)\right].
\end{align*}
This holds for every bounded $Z$-measurable $g$, for every bounded $\F_{t+}$-measurable $h_t$, and every bounded $\F^\tau_t$-measurable $f_t$ which is continuous at every point but $t$. In fact, this is valid for \emph{every} bounded measurable $\F^\tau_t$-measurable $f_t$, thanks to  Lemma \ref{le:continuousae}. We conclude that, for $t \in \T$, $\F^\tau_t$ is conditionally independent of $Z$ given $\F_{t+}$. Use Lemma \ref{le:compatibility-rightcontinuous} to complete the proof of (iv).
Finally, under the additional assumption that the completion of $\FF$ is right-continuous, it is clear that (iv) and (i) are equivalent.
\hfill\qedsymbol

\section{Extreme points} \label{se:extremepoints}

In the previous section we saw in what sense adapted processes (resp. stopping times) are dense in the set of compatible processes (resp. randomized stopping times), in a certain joint-distributional sense. In this section we study the convex structure of the associated sets of joint distributions. In fact, these same dense subsets are often precisely the extreme points.

\subsection{Transfer principles} \label{se:transfer}

We first recall a crucial lemma, often known as the \emph{transfer principle}. We will see in Sections \ref{se:extremepoints-static} and \ref{se:extremepoints-discrete} how a transfer principle, when available, immediately provides the extreme points of sets of joint distributions with one fixed marginal.

\begin{lemma}[Theorem 6.10 of \cite{kallenberg-foundations}] \label{le:transfer}
Suppose $X$ and $Y$ are random elements of measurable spaces $\X$ and $\Y$, respectively, where $\X$ is a Polish space. Then there exist a measurable function $f : \Y \times [0,1] \rightarrow \X$ and (perhaps on an extension of the probability space) a uniformly distributed random variable $U$, independent of $(X,Y)$, such that $(X,Y) \stackrel{d}{=} (f(Y,U),Y)$.
\end{lemma}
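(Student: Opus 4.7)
The plan is to combine a measurable disintegration of the joint law of $(X,Y)$ with a measurable quantile transform applied to an auxiliary uniform variable.

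First, since $\X$ is Polish and hence standard Borel, there exists a regular conditional distribution of $X$ given $Y$: a measurable Markov kernel $\kappa : \Y \to \P(\X)$ such that $\PP(X \in A \mid Y) = \kappa(Y,A)$ almost surely for every $A \in \B(\X)$. After modifying $\kappa$ on a $(\PP \circ Y^{-1})$-null set by setting $\kappa(y,\cdot) = \delta_{x_0}$ for some fixed $x_0 \in \X$, I may assume $\kappa(y,\X) = 1$ for every $y \in \Y$. This step is classical and uses only that $\X$ is standard Borel; no structure on $\Y$ beyond its $\sigma$-field is needed.

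Next, invoke a Borel isomorphism $\X \cong B$ with some Borel $B \subset [0,1]$ to reduce to the case $\X \subset [0,1]$, and define
\[
f(y,u) = \inf\{x \in [0,1] : F_y(x) \ge u\}, \qquad F_y(x) := \kappa(y,[0,x]),
\]
with the convention that $f(y,u) := x_0$ whenever the infimum falls outside $\X$ or $u \in \{0,1\}$. Joint measurability of $f$ on $\Y \times [0,1]$ follows from the identity $\{(y,u) : f(y,u) \le x\} = \{(y,u) : u \le F_y(x)\}$ together with the fact that $y \mapsto F_y(x)$ is measurable for each rational $x$ and $F_y$ is right-continuous in $x$, so sub-level sets of $f$ are determined by countably many measurable conditions. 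The classical quantile lemma then yields $f(y,U) \sim \kappa(y,\cdot)$ whenever $U$ is uniform on $[0,1]$ and $y$ lies in the set (of full measure) where $\kappa(y,\X)=1$.

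Finally, enlarge $(\Omega,\F,\PP)$ by taking its product with $([0,1], \B([0,1]), \mathrm{Leb})$ to produce a uniform random variable $U$ independent of $(X,Y)$. For any bounded measurable $g: \X \times \Y \to \R$, Fubini together with the pushforward property gives
\[
\E[g(f(Y,U),Y)] = \E\!\left[\int_0^1 g(f(Y,u),Y)\,du\right] = \E\!\left[\int_\X g(x,Y)\,\kappa(Y,dx)\right] = \E[g(X,Y)],
\]
which establishes $(f(Y,U),Y) \stackrel{d}{=} (X,Y)$, as required. The substantive technical obstacle is the joint measurability of the quantile map $f$, handled by the sub-level-set characterization above; the construction of $\kappa$ and the extension of the probability space to support an independent uniform are standard.
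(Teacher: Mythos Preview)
The paper does not supply its own proof of this lemma; it merely cites Kallenberg's textbook. Your argument---disintegrate to get a regular conditional kernel, reduce via Borel isomorphism to $\X\subset[0,1]$, apply the quantile transform, and append an independent uniform on a product extension---is precisely the standard proof (and is essentially what appears in Kallenberg), so it is correct and matches the intended reference.
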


A dynamic (discrete-time) version of Lemma \ref{le:transfer} follows by induction:

\begin{lemma} \label{le:transfer-dynamic}
Suppose $X=(X_n)_{n=1}^N$ and $Y=(Y_n)_{n=1}^N$ are stochastic processes with values in measurable spaces $\X$ and $\Y$, respectively, where $\X$ is a Polish space. If $X_n$ is conditionally independent of $\F^Y_N$ given $\F^Y_n$, for every $n$, then there exist measurable functions $f_n : \Y^n \times [0,1] \rightarrow \X$ and (perhaps on an extension of the probability space) an independent uniform random variable $U$ such that, if $Z = (Z_1,\ldots,Z_N)$ where
\[
Z_n = f_n(Y_1,\ldots,Y_n,U),
\]
then $(Z,Y)  \stackrel{d}{=} (X,Y)$.
\end{lemma}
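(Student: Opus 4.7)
The plan is to induct on $N$ and invoke the static transfer principle, Lemma~\ref{le:transfer}, at each step. First enlarge the probability space to support a $\operatorname{Uniform}[0,1]$ variable $U$ independent of $(X,Y)$, and use a Borel isomorphism between $[0,1]$ and $[0,1]^N$ (pushing the uniform measure forward to the product uniform measure) to decompose $U$ into $N$ independent uniforms $V_1,\ldots,V_N$. I will first build each $Z_n$ as a measurable function of $(Y_1,\ldots,Y_n,V_1,\ldots,V_n)$, and then compose with the Borel isomorphism to obtain the required $f_n:\Y^n\times[0,1]\to\X$ satisfying $Z_n=f_n(Y_1,\ldots,Y_n,U)$.

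For the base case $n=1$, Lemma~\ref{le:transfer} applied to $(X_1,Y_1)$ yields a measurable $g_1:\Y\times[0,1]\to\X$ such that $Z_1:=g_1(Y_1,V_1)$ satisfies $(Z_1,Y_1)\stackrel{d}{=}(X_1,Y_1)$. Since $V_1$ is independent of $Y$, the conditional law of $Z_1$ given $Y$ depends only on $Y_1$ and coincides with the conditional law of $X_1$ given $Y_1$; the compatibility hypothesis at level $n=1$ identifies the latter with the conditional law of $X_1$ given the full path $Y$, yielding $(Z_1,Y)\stackrel{d}{=}(X_1,Y)$.

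For the inductive step, assume $(Z_1,\ldots,Z_n,Y)\stackrel{d}{=}(X_1,\ldots,X_n,Y)$ with each $Z_k$ measurable with respect to $\sigma(Y_1,\ldots,Y_k,V_1,\ldots,V_k)$. Apply Lemma~\ref{le:transfer} with the Polish-valued variable $X_{n+1}$ and the measurable-space-valued variable $(X_1,\ldots,X_n,Y_1,\ldots,Y_{n+1})$ to obtain a measurable $g_{n+1}:\X^n\times\Y^{n+1}\times[0,1]\to\X$ satisfying
\[
(g_{n+1}(X_1,\ldots,X_n,Y_1,\ldots,Y_{n+1},V_{n+1}),X_1,\ldots,X_n,Y_1,\ldots,Y_{n+1})\stackrel{d}{=}(X_{n+1},X_1,\ldots,X_n,Y_1,\ldots,Y_{n+1}),
\]
where the pre-chosen $V_{n+1}$ plays the role of the independent uniform supplied by the lemma. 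Define $Z_{n+1}:=g_{n+1}(Z_1,\ldots,Z_n,Y_1,\ldots,Y_{n+1},V_{n+1})$. Since $V_{n+1}$ is independent of both $(Z_1,\ldots,Z_n,Y)$ and $(X_1,\ldots,X_n,Y)$, the induction hypothesis upgrades to equality in distribution with $V_{n+1}$ appended; applying the appropriate measurable map then gives
\[
(Z_1,\ldots,Z_{n+1},Y)\stackrel{d}{=}(X_1,\ldots,X_n,g_{n+1}(X_1,\ldots,X_n,Y_1,\ldots,Y_{n+1},V_{n+1}),Y).
\]
The compatibility hypothesis, interpreted in its joint form as $\F^X_{n+1}$ being conditionally independent of $\F^Y_N$ given $\F^Y_{n+1}$, together with the independence of $V_{n+1}$ from $Y$, ensures that on both sides of the preceding display the tail $(Y_{n+2},\ldots,Y_N)$ is conditionally independent of the first $n+2$ coordinates given $(Y_1,\ldots,Y_{n+1})$. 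The transfer identity therefore extends from $(Y_1,\ldots,Y_{n+1})$ to the full $Y$, identifying the right-hand side with $(X_1,\ldots,X_{n+1},Y)$ in distribution and closing the induction.

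The main obstacle is precisely this passage from $(Y_1,\ldots,Y_{n+1})$ to the full $Y$ in the inductive step: the static transfer principle only yields the ``local'' identity in law involving $(Y_1,\ldots,Y_{n+1})$, and bridging to the full path requires the compatibility assumption in an essential way. Once the induction is complete, $Z_n$ is expressed as a measurable function of $(Y_1,\ldots,Y_n,V_1,\ldots,V_n)$, and composing with the Borel isomorphism used to produce $(V_1,\ldots,V_N)$ from $U$ gives the desired $f_n$.
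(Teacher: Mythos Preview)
Your proof is correct and follows essentially the same strategy as the paper: induct, apply the static transfer principle (Lemma~\ref{le:transfer}) at each step, and use a Borel isomorphism to collapse the family of independent uniforms $(V_1,\ldots,V_N)$ into a single uniform $U$. The only organizational difference is that the paper inducts on the length $N$ of the processes (carrying just $Y^N$ in the hypothesis and then appending $Y_{N+1}$ via compatibility), whereas you fix $N$ and induct on $n$, carrying the full path $Y=(Y_1,\ldots,Y_N)$ throughout; the ``extension to full $Y$'' step that you flag as the main obstacle is exactly the paper's one-line appeal to ``$X^N$ is conditionally independent of $Y^{N+1}$ given $Y^N$.'' Note that both arguments, yours and the paper's, use compatibility in its joint form $\F^X_n \perp\!\!\!\perp \F^Y_N \mid \F^Y_n$ rather than the pointwise form literally stated in the lemma; you are right to make this interpretation explicit.
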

\begin{proof}
Abbreviate $X^n = (X_1,\ldots,X_n)$ for $n \in \N$ and similarly for other processes. 
By Borel isomorphism, it suffices to show that there exist measurable functions $f_n : \Y^n \times [0,1]^n \rightarrow \X$ and independent uniform random variables $(U_1,\ldots,U_N)$ such that, if $Z = (Z_1,\ldots,Z_N)$ where
\[
Z_n = f_n(Y_1,\ldots,Y_n,U_1,\ldots,U_n),
\]
then $(Y,Z)$ has the same law as $(Y,X)$, where $U_k$ is independent of $(X^k,Y^N)$ for each $k=1,\ldots,n$.
We prove this by induction, with the $N=1$ case is covered by Lemma \ref{le:transfer}. Suppose the claim is proven for some $N$. Thanks to Lemma \ref{le:transfer}, we may find an independent uniform $U_{N+1}$ and a measurable function $g : \X^N \times \Y^{N+1} \times [0,1] \rightarrow \X$ such that
\[
(X^N,Y^{N+1},g(X^N,Y^{N+1},U_{N+1})) \overset{d}{=} (X^N,Y^{N+1},X_{N+1}).
\]
We may take $U_1,\ldots,U_{N+1}$ to be independent of each other and of $Y^{N+1}$, with also $U_k$ independent of $X^k$ for each $k=1,\ldots,N+1$.
Because $X^N$ is conditionally independent of $Y^{N+1}$ given $Y^N$, we have $(X^N,Y^{N+1}) \stackrel{d}{=} (Z^N,Y^{N+1})$. Hence,
\[
(X^N,Y^{N+1},g(X^N,Y^{N+1},U_{N+1})) \overset{d}{=} (Z^N,Y^{N+1},g(Z^N,Y^{N+1},U_{N+1})).
\]
Finally, define
\begin{align*}
f_{N+1}&(y_1,\ldots,y_{N+1},u_1,\ldots,u_{N+1}) \\
	&= g\Bigl(f_1(y_1,u_1),\ldots,f_N(y_1,\ldots,y_N,u_1,\ldots,u_N),y_1,\ldots,y_{N+1},u_{N+1}\Bigr).
\end{align*}
\end{proof}

In continuous time, there is no analogous transfer principle in general, as the following example illustrates.

\begin{example} \label{ex:tsirelson}
Let $W = (W_t)_{t \ge 0}$ and $X = (X_t)_{t \ge 0}$ be continuous processes, defined on some common filtered probability space $(\Omega,\F,\FF,\PP)$, such that $X$ and $W$ are adapted to $\FF$, $W$ is an $\FF$-Wiener process, and $(W,X)$ satisfy the stochastic differential equation
\begin{align}
X_t = \int_0^tb(s,X)\, ds + W_t, \label{def:tsirelson}
\end{align}
where $b : \R_+ \times C(\R_+) \rightarrow \R$ is the function of Tsirelson's example \cite{tsirel1975example}. We know then that $\F^X_t$ is strictly larger than $\F^W_t$ for some $t \ge 0$. Because $W$ is a Wiener process with respect to $\FF$, it follows that $\F^X_t$ is conditionally independent of $\F^W_\infty$ given $\F^W_t$, for every $t \ge 0$.

Now suppose also that $X = F(W,U)$ a.s., where $U$ is uniformly distributed on $[0,1]$ and independent of $W$, and where $F : C(\R_+) \times [0,1] \rightarrow C(\R_+)$ is an adapted function. By ``adapted function" we mean that for every $(w,w',u,t) \in C(\R_+) \times C(\R_+) \times [0,1] \times \R_+$ satisfying $w_s = w'_s$ for all $s \le t$ we have $F(w,u)(t) = F(w',u)(t)$. The independence of $W$ and $U$ ensures that $W$ is a Brownian motion (in its own filtration) under the regular conditional measure $P_u := \PP(\cdot \ | \ U=u)$, for Lebesgue-a.e.\ $u \in [0,1]$. Now, under $P_u$, it holds that $X$ is adapted to the completion of $\FF^W$, that $W$ is Brownian motion under $\FF^W$, and that the SDE \eqref{def:tsirelson} holds a.s. But this provides a \emph{strong solution} of the SDE \eqref{def:tsirelson}, which is a contradiction.
\end{example}

\subsection{Extreme points in the static case} \label{se:extremepoints-static}

Here we  review how the transfer principle leads to a well known description of the extreme points of the convex set $\Pi(\mu)$ of probability measures on $\X \times \Y$ with fixed first marginal $\mu$. This is a good point to recall the definitions of $\Pi(\mu)$ and $\Pi_0(\mu)$ from \eqref{def:Pi(mu)} and \eqref{def:Pi0(mu)}, as well as $\Pi(\mu,\nu)$ and $\Pi_0(\mu,\nu)$ defined in \eqref{def:Pi(mu,nu)} and \eqref{def:Pi0(mu,nu)}.

\begin{proposition} \label{pr:extremepoints-static}
Let $\X$ and $\Y$ be Polish spaces, and let $\mu \in \P(\X)$. 
The set $\Pi(\mu)$ is convex. Moreover, every $P \in \Pi(\mu)$ can be written as $P(\cdot) = \int_{\Pi_0(\mu)}m(\cdot)M(dm)$, for some probability measure $M$ on $\Pi_0(\mu)$.
\end{proposition}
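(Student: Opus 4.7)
The convexity of $\Pi(\mu)$ is immediate: a convex combination of probability measures with first marginal $\mu$ still has first marginal $\mu$.

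For the integral representation, my plan is to apply the transfer principle (Lemma \ref{le:transfer}) and then express $P$ as the pushforward along a measurable family of Monge-type measures. First, take $(X,Y)$ with joint law $P$, so $X \sim \mu$. By Lemma \ref{le:transfer}, on a suitable extension there exist a measurable function $f : \X \times [0,1] \rightarrow \Y$ and a uniform random variable $U$ independent of $X$ such that $(X,Y) \stackrel{d}{=} (X, f(X,U))$.

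For each $u \in [0,1]$, define $m_u \in \Pi_0(\mu)$ by
\[
m_u(dx,dy) = \mu(dx)\,\delta_{f(\cdot,u)(x)}(dy),
\]
noting that $x \mapsto f(x,u)$ is measurable because $f$ is jointly measurable. Next I would verify that $u \mapsto m_u$ is Borel measurable from $[0,1]$ to $\P(\X\times\Y)$. Since $\P(\X\times\Y)$ is equipped with the weak topology (whose Borel $\sigma$-field is generated by the maps $\eta \mapsto \int\phi\,d\eta$ for bounded continuous $\phi$), it suffices to check that for each such $\phi$,
\[
u \longmapsto \int_{\X\times\Y}\phi\,dm_u = \int_\X \phi(x,f(x,u))\,\mu(dx)
\]
is measurable, which follows from Fubini's theorem applied to the jointly measurable function $(x,u)\mapsto \phi(x,f(x,u))$.

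Let $M$ be the pushforward of Lebesgue measure on $[0,1]$ under the map $u \mapsto m_u$; this is a probability measure supported on $\Pi_0(\mu)$. To conclude, I would verify the representation by testing against bounded continuous $\phi$:
\begin{align*}
\int_{\X\times\Y}\phi\,dP &= \E[\phi(X,Y)] = \E[\phi(X,f(X,U))] \\
	&= \int_0^1\int_\X \phi(x,f(x,u))\,\mu(dx)\,du = \int_0^1 \Bigl(\int\phi\,dm_u\Bigr)du \\
	&= \int_{\Pi_0(\mu)}\Bigl(\int\phi\,dm\Bigr)M(dm).
\end{align*}
Since bounded continuous functions determine Borel probability measures on the Polish space $\X\times\Y$, this yields $P(\cdot) = \int_{\Pi_0(\mu)}m(\cdot)\,M(dm)$.

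The only subtle step is the measurability of $u \mapsto m_u$ and the interpretation of $M$ as a genuine Borel probability measure on $\Pi_0(\mu)$; this is easily handled as above by working with the trace of the Borel $\sigma$-field of $\P(\X\times\Y)$ on $\Pi_0(\mu)$, since the pushforward is automatically concentrated on the image of the map. Everything else is a direct application of the transfer principle and Fubini.
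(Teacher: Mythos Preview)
Your proof is correct and follows essentially the same approach as the paper's: both invoke the transfer principle (Lemma~\ref{le:transfer}) to write $P$ as the law of $(X,f(X,U))$, define the Monge-type measures $m_u(dx,dy)=\mu(dx)\delta_{f(x,u)}(dy)$, and integrate over $u$. You simply supply additional detail on the measurability of $u\mapsto m_u$ and the explicit definition of $M$ as a pushforward, which the paper leaves implicit.
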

\begin{proof}
Convexity is obvious.
Fix $P \in \Pi(\mu)$. 
By Lemma \ref{le:transfer}, there exists a measurable function $f : \X \times [0,1] \rightarrow \Y$ such that $(\mu \times \mathrm{Leb}) \circ [(x,u) \mapsto (x,f(x,u))]^{-1} = P$. Define $P_u \in \Pi_0(\mu)$ by
\[
P_u(dx,dy) = \mu(dy)\delta_{f(x,u)}(dy).
\]
Then, for any bounded measurable function $\varphi$ on $\X \times \Y$, we have
\begin{align*}
\int_{\X \times \Y}\varphi(x,y)P(dx,dy) &= \int_0^1\int_{\X}\varphi(x,f(x,u))\,\mu(dx)\,du \\
	&= \int_0^1\int_{\X\times\Y}\varphi(x,y)\,P_u(dx,dy)\,du.
\end{align*}
\end{proof}

\begin{remark}
Recalling from Proposition \ref{pr:density-one-marginal} that $\Pi_0(\mu)$ is dense in $\Pi(\mu)$ when $\mu$ is nonatomic, we have encountered a closed convex set $\Pi(\mu)$ which is the closure of its extreme points. This is not as peculiar as it may at first seem, and an intriguing result of Klee \cite{klee1959some} shows that this situation 
is generic in a topological sense in infinite dimensional Banach spaces.
\end{remark}

\subsection{Extreme points in discrete time} \label{se:extremepoints-discrete}

We now extend Proposition \ref{pr:extremepoints-static}, which requires a bit of notation but follows essentially the same proof, taking the dynamic form of the transfer principle Lemma \ref{le:transfer-dynamic} for granted.

Fix $N$ throughout this section. Let $\Y$ be a measurable space and $\X$ a Polish space. Let $Y=(Y_1,\ldots,Y_N)$ denote the canonical process (identity map) on $\Y^N$, and let $\FF^Y=(\F^Y_n)_{n=1}^N$ denote its natural filtration. Similarly, let $X=(X_1,\ldots,X_N)$ denote the canonical process (identity map) on $\X^N$, and let $\FF^X=(\F^X_n)_{n=1}^N$ denote its natural filtration. Both canonical processes extend in the obvious way to $\Y^N \times \X^N$. Similarly, both filtrations extend in the natural way to $\Y^N \times \X^N$, e.g., by identifying $\F^Y_n$ with $\F^Y_n \otimes \{\emptyset,\X^N\}$. 

Fix a joint distribution $\mu$ on $\Y^N$.
Let $\Pi^c(\mu)$ denote the set of compatible joint laws, i.e., the set of probability measures $P$ on $\Y^N \times \X^N$ with first marginal $\mu$ and under which $\F^X_n$ is conditionally independent of $\F^Y_N$ given $F^Y_n$, for every $n=1,\ldots,N$. Let $\Pi^c_0(\mu)$ denote the subset of adapted joint laws, i.e., the set of probability measures on $\Y^N \times \X^N$ of the form
\[
\mu(dy)\delta_{\hat{x}(y)}(dx),
\]
where $\hat{x} = (\hat{x}_1,\ldots,\hat{x}_N) : \Y^N \rightarrow \X^N$ is \emph{adapted} in the sense that $\hat{x}_n(y_1,\ldots,y_N)$ depends only on $y_1,\ldots,y_n$, for each $n$, or equivalently $\hat{x}^{-1}(C) \in \F^Y_n$ for every set $C \in \F^X_n$.
Note that Theorem \ref{th:intro:density-discrete} says that $\Pi^c_0(\mu)$ is dense in $\Pi^c(\mu)$ when the law of $Y_1$ is nonatomic and $\X$ is convex.

The following theorem describes the convex structure of $\Pi^c(\mu)$. Taking some Choquet theory for granted, this is equivalent to the recent \cite[Theorem 6.1]{BaBeEdPi17}.

\begin{theorem} \label{th:extremepoints-discrete}
The set $\Pi^c(\mu)$ is convex. Moreover, every $P \in \Pi^c(\mu)$ can be written as $P(\cdot) = \int_{\Pi^c_0(\mu)}m(\cdot)M(dm)$, for some probability measure $M$ on $\Pi^c_0(\mu)$.
\end{theorem}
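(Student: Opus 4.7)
The plan is to reduce the theorem to a direct application of the dynamic transfer principle (Lemma \ref{le:transfer-dynamic}), with Lebesgue measure on $[0,1]$ serving as the mixing measure.

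First, convexity of $\Pi^c(\mu)$ is immediate: if $P_1, P_2 \in \Pi^c(\mu)$ and $\lambda \in [0,1]$, then $\lambda P_1 + (1-\lambda) P_2$ has first marginal $\mu$, and the compatibility condition \eqref{pf:compat-constraint} (written for the canonical process on $\Y^N \times \X^N$) is linear in $P$, so it is preserved under convex combinations.

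For the representation claim, I would fix $P \in \Pi^c(\mu)$ and realize the canonical process $(X, Y)$ on some probability space with $(X, Y) \sim P$. The compatibility condition is exactly the hypothesis of Lemma \ref{le:transfer-dynamic}, so there exist measurable functions $f_n : \Y^n \times [0,1] \to \X$ and a uniform random variable $U$ (on an enlarged space), independent of $Y$, such that the process $Z_n := f_n(Y_1, \ldots, Y_n, U)$ satisfies $(Z, Y) \stackrel{d}{=} (X, Y)$. For each $u \in [0,1]$, define the adapted map $\hat{x}^u : \Y^N \to \X^N$ by $\hat{x}^u_n(y_1, \ldots, y_N) := f_n(y_1, \ldots, y_n, u)$, and set
\[
P_u(dy, dx) := \mu(dy)\,\delta_{\hat{x}^u(y)}(dx).
\]
Then $P_u \in \Pi^c_0(\mu)$ by construction. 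By Fubini and the independence of $U$ and $Y$, for any bounded measurable $\varphi$ on $\Y^N \times \X^N$,
\[
\int \varphi\,dP = \E[\varphi(Y, Z)] = \int_0^1 \E[\varphi(Y, f_1(Y_1, u), \ldots, f_N(Y, u))]\,du = \int_0^1 \int \varphi\,dP_u\,du.
\]

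To conclude, I would check that the map $u \mapsto P_u$ is Borel measurable from $[0,1]$ into $\P(\Y^N \times \X^N)$: indeed, for every bounded continuous $\varphi$, the map $u \mapsto \int \varphi \, dP_u$ is measurable by Fubini (since $f_n$ are jointly measurable), and this suffices because the Borel $\sigma$-field on $\P(\Y^N \times \X^N)$ is generated by such evaluation maps. Letting $M$ be the pushforward of Lebesgue measure on $[0,1]$ under $u \mapsto P_u$, we obtain a Borel probability measure supported on $\Pi^c_0(\mu)$, and the previous display becomes $\int \varphi\,dP = \int \bigl(\int \varphi\,dm\bigr)\,M(dm)$, which is the desired barycenter representation.

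The only real subtlety here is the measurability of $u \mapsto P_u$ and the fact that $\Pi^c_0(\mu)$ can be used as a target; the hard conceptual work has already been done in Lemma \ref{le:transfer-dynamic}, which itself is the principal obstacle but is established earlier in the excerpt. One could alternatively deduce the result from Choquet theory by identifying the extreme points of $\Pi^c(\mu)$ with $\Pi^c_0(\mu)$, but the transfer-principle route above yields an explicit, concrete disintegration and avoids verifying the technical hypotheses of Choquet's theorem.
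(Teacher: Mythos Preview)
Your proof is correct and follows essentially the same route as the paper's: both invoke Lemma \ref{le:transfer-dynamic} to produce the adapted functions $f_n$, define $P_u$ by freezing $u$, and integrate over Lebesgue measure on $[0,1]$ to recover $P$. You are in fact slightly more careful than the paper in checking the measurability of $u \mapsto P_u$ and in explicitly constructing $M$ as the pushforward of Lebesgue measure.
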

\begin{proof}
Convexity is straightforward after noticing that membership in $\Pi^c(\mu)$ is characterized by a family of linear constraints; see \eqref{pf:compat-constraint}. 
Fix $P \in \Pi^c(\mu)$. 
By Lemma \ref{le:transfer-dynamic}, there exist measurable functions $f_n : \Y^n \times [0,1] \rightarrow \X$, for $n=1,\ldots,N$, such that, if $f : \Y^N \times [0,1] \rightarrow \X^N$ is defined by
\[
f(y_1,\ldots,y_N,u) = (f_1(y_1,u),\ldots,f_N(y_1,\ldots,y_N,u)),
\]
then $(\mu \times \mathrm{Leb}) \circ [(x,u) \mapsto (x,f(x,u))]^{-1} = P$. Define $P_u$ by
\[
P_u(dy,dx) = \mu(dy)\delta_{f(x,u)}(dx),
\]
and note that the structure of $f$ ensures that $P_u \in \Pi^c_0(\mu)$. Finish as in the proof of Proposition \ref{pr:extremepoints-static}.
\end{proof}

\subsection{Extreme points in continuous time} \label{se:counterexample}

Using Example \ref{ex:tsirelson} and adapting the proof of Theorem \ref{th:extremepoints-discrete}, we can show that the natural analogue of Theorem \ref{th:extremepoints-discrete} fails in continuous time. Let us make this precise: Let $\X = C(\R_+)$. Let $\FF^i=(\F^i_t)_{t \ge 0}$ denote the two filtrations generated by the canonical processes $(X^1,X^2)$ on $\X^2$. Let $\Pi^c(\W)$ denote the set of joint laws on $\X^2$ compatible with Wiener measure. Precisely, let $\W$ denote Wiener measure on $\X$, and let $\Pi^c(\W)$ denote the set of $Q \in \P(\X^2)$ with first marginal $\W$ such that $\F^2_t$ is conditionally independent of $\F^1_\infty$ given $\F^1_t$, for every $t \ge 0$. Let $\Pi^c_0(\W)$ denote the subset of adapted joint laws, i.e., the set of probability measures on $\X^2$ of the form
\[
\W(d\omega)\delta_{F(\omega)}(dx),
\]
where $F : \X \rightarrow \X$ is a measurable and \emph{adapted} function in the sense that $F(\omega)(t)=F(\omega')(t)$ whenever $\omega(s)=\omega'(s)$ for all $s \le t$.

Next, recall the probability space $(\Omega,\F,\FF,\PP)$ and the processes $W$ and $X$ defined in Example \ref{ex:tsirelson}. Let $P = \PP \circ (W,X)^{-1}$, and note that $P \in \Pi^c(\W)$.  We claim that there is no measurable map $[0,1] \ni u \mapsto P_u \in \Pi^c_0(\W)$ such that $P(\cdot) = \int_0^1P_u(\cdot)du$. Indeed, suppose there were such a map $u \mapsto P_u$. By the disintegration theorem, we may write
\[
duP_u(d\omega,dx) = du\W(d\omega)\delta_{F(u,\omega)}(dx),
\]
for some measurable function $F: [0,1] \times \X \rightarrow \X$ with the property that $F(u,\cdot)$ must be an adapted function for each $u$. This would imply that we have an adapted map $F$ for which $\PP(X = F(U,W)) =1$. As explained in Example \ref{ex:tsirelson}, this is impossible.

\subsection{Extreme points of randomized stopping times}

Although the previous section showed that the extreme point story breaks down in continuous time, this section shows that there is no such difficulty when working with stopping times as in Section \ref{se:randomized-stopping}. The description of extreme points described in Theorem \ref{th:stopping-extremepoints} is not new (see, e.g., \cite{ghoussoub1982integral,edgar1982compactness}), but we include it for the sake of completeness.

Let $(\Omega,\F,\mu)$ be an arbitrary probability space. Let $\FF=(\F_t)_{t \ge 0}$ be a filtration on $\Omega$, and with some abuse of notation let $\FF$ denote the same filtration extended to $\overline{\Omega} := \Omega \times [0,\infty]$ by identifying $\F_t$ with $\F_t \otimes \{\emptyset,[0,\infty]\}$. Similarly, let $\tau$ denote the identity map on $[0,\infty]$, and extend $\tau$ to $\overline{\Omega}$ by setting $\tau(\omega,t) = t$. Let $\FF^\tau=(\F^\tau_t)_{t \ge 0}$ denote the filtration on $[0,\infty]$ defined by
\[
\F^\tau_t = \sigma(\{\tau \le s\} : \ s \le t),
\]
and again abuse notation by considering $\FF^\tau$ as a filtration on $\overline{\Omega}$.

Let $\Pi^c(\mu)$ denote the set of $\FF$-randomized stopping time laws, defined more precisely as the set of $P \in \P(\overline{\Omega})=\P(\Omega \times [0,\infty])$ with first marginal $\mu$ and with $\F^\tau_t$ conditionally independent of $\F_\infty$ given $\F_t$, for every $t \ge 0$. The latter constraint is equivalent to requiring $P(\tau \le t | \F_\infty) = P(\tau \le t | \F_t)$ a.s., for every $t$. Let $\Pi^c_0(\mu)$ denote the subset of $\FF$-stopping time laws, defined as
\[
\Pi^c_0(\mu) = \left\{\mu(d\omega)\delta_{\hat{\tau}(\omega)}(dt) : \hat{\tau} : \Omega \rightarrow [0,\infty] \text{ is an } \FF-\text{stopping time}\right\}.
\]
Note that Theorem \ref{th:density-stopping} showed that, if $\Omega=D(\R_+;\X)$ for some Polish space $\X$, if $\FF$ is the canonical right-continuous filtration, and if the process $X$ with law $\mu$ is such that $X_t$ is nonatomic for every $t > 0$, then the weak closure of $\Pi^c_0(\mu)$ is precisely $\Pi^c(\mu)$.

The following theorem describes the convex structure of $\Pi^c(\mu)$. Endow $\Pi^c_0(\mu)$ with the $\sigma$-field generated by the maps $\Pi^c_0(\mu) \ni P \mapsto P(C)$, for sets $C \in \F_\infty \vee \F^\tau_\infty$.

\begin{theorem} \label{th:stopping-extremepoints}
The set $\Pi^c(\mu)$ is convex. Moreover, every $P \in \Pi^c(\mu)$ can be written as $P(\cdot) = \int_{\Pi^c_0(\mu)}m(\cdot)M(dm)$ for some probability measure $M$ on $\Pi^c_0(\mu)$.
\end{theorem}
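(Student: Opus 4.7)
The plan is to mimic the proof of Theorem~\ref{th:extremepoints-discrete}, replacing the discrete transfer principle by the classical right-continuous inverse of a CDF. Convexity of $\Pi^c(\mu)$ is immediate because, after disintegrating $P(d\omega,dt) = \mu(d\omega)\,\nu_\omega(dt)$, membership in $\Pi^c(\mu)$ is characterized by the linear constraint that, for each $t \geq 0$, the function $\omega \mapsto \nu_\omega([0,t])$ be a version of $P(\tau \leq t\,|\,\F_t)$. So only the integral representation requires work.

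Fix $P \in \Pi^c(\mu)$ and disintegrate as above, and set $A_t(\omega) := \nu_\omega([0,t])$. The randomized stopping property translates to the statement that, for each fixed $t \geq 0$, $A_t(\cdot)$ coincides $\mu$-a.s.\ with an $\F_t$-measurable function. Taking a countable dense $D \subset \R_+$, modifying $\nu_\omega$ on a $\mu$-null set of $\omega$, and exploiting the right-continuity of $t \mapsto A_t(\omega)$, I would arrange a single version in which $A_t(\cdot)$ is genuinely $\F_t$-measurable for every $t \ge 0$ and $t \mapsto A_t(\omega)$ is c\`adl\`ag and increasing with $A_\infty(\omega) \leq 1$ for every $\omega$.

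Next, define the right-continuous inverse
$$\hat\tau_u(\omega) := \inf\{t \geq 0 : A_t(\omega) \geq u\}, \qquad u \in (0,1].$$
The identity $\{\hat\tau_u \leq t\} = \{A_t \geq u\} \in \F_t$, valid by right-continuity of $A_\cdot(\omega)$, shows that $\hat\tau_u$ is an $\FF$-stopping time, so $P_u(d\omega, dt) := \mu(d\omega)\,\delta_{\hat\tau_u(\omega)}(dt)$ lies in $\Pi^c_0(\mu)$. The classical fact that the right-continuous inverse of a sub-probability CDF pushes Lebesgue measure on $(0,1]$ forward to that CDF gives, for any bounded measurable $f$ on $\overline{\Omega}$,
$$\int_0^1 \int f\,dP_u\,du = \int \mu(d\omega) \int_0^1 f(\omega,\hat\tau_u(\omega))\,du = \int \mu(d\omega)\int f(\omega,t)\,\nu_\omega(dt) = \int f\,dP,$$
which yields the desired representation with $M := \mathrm{Leb}\circ (u \mapsto P_u)^{-1}$, provided the measurability of $u \mapsto P_u$ into $\Pi^c_0(\mu)$ is verified. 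This measurability reduces, via Fubini and the stated $\sigma$-field on $\Pi^c_0(\mu)$, to the joint measurability of $(u,\omega) \mapsto \hat\tau_u(\omega)$, which is automatic from the monotonicity and joint measurability of $(t,\omega) \mapsto A_t(\omega)$.

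I expect the main technical obstacle to be the cleanup step in the second paragraph: producing a single version of $A_t$ that is $\F_t$-measurable for \emph{every} $t$ rather than only almost every $t$, without any right-continuity or completeness hypothesis on $\FF$. The countable-dense-set argument should suffice, but some care is needed to simultaneously preserve c\`adl\`ag monotonicity and the normalization $A_\infty(\omega) = \nu_\omega([0,\infty))$ on a set of full $\mu$-measure, so that the inverse $\hat\tau_u$ is well-defined and stopping-time-valued for every $u$.
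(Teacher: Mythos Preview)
Your proposal is correct and follows essentially the same route as the paper: disintegrate $P(d\omega,dt)=\mu(d\omega)P(\omega,dt)$, form the right-continuous inverse $A(\omega,u)=\inf\{t\ge 0: P(\omega,[0,t])\ge u\}$, set $P_u=\mu(d\omega)\delta_{A(\omega,u)}(dt)\in\Pi^c_0(\mu)$, and integrate over $u\in[0,1]$. If anything you are more careful than the paper, which leaves implicit the step of choosing a version of $\omega\mapsto P(\omega,[0,t])$ that is $\F_t$-measurable for every $t$ so that each $A(\cdot,u)$ is a bona fide $\FF$-stopping time.
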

\begin{proof}
To prove convexity is straightforward, simply note that $P \in \Pi^c(\mu)$ if and only if $P(\cdot \times [0,\infty]) = \mu(\cdot)$ and
\[
\int_{\Omega\times[0,\infty]} (f(\omega) - f_t(\omega))1_{[0,t]}(s)P(d\omega,ds) = 0,
\]
for every $t \ge 0$ and every bounded $\F_\infty$-measurable function $f$ on $\Omega$, with $f_t := \E^\mu[f | \F_t]$.

Let $P \in \Pi^c(\mu)$, and disintegrate $P(d\omega,dt) = \mu(d\omega)P(\omega,dt)$. define $A : \Omega \times [0,1] \rightarrow [0,\infty]$ by
\begin{align*}
A(\omega,u) = \inf\left\{t \ge 0 : P(\omega,[0,t]) \ge u\right\}.
\end{align*}
Then $P(\omega,\cdot) = \mathrm{Leb} \circ A(\omega,\cdot)^{-1}$, and we conclude that for $t \ge 0$
\begin{align}
P(\omega,[0,t]) &= \int_0^11_{\{A(\omega,u) \le t\}}du = \int_0^1 \widehat{P}_u(\omega,[0,t])du, \label{pf:extreme-stopping1}
\end{align}
where we define $P_u \in \Pi^c_0(\mu)$ for $u \in [0,1]$ by setting $\widehat{P}_u(\omega,dt) = \delta_{A(\omega,u)}(dt)$ and $P_u(d\omega,dt) = \mu(d\omega)\widehat{P}_u(\omega,dt)$. This shows that $P(\cdot) = \int_0^1P_u(\cdot)\,du$.
\end{proof}

\section{Stochastic optimal control} \label{se:optimalcontrol}

In this section we illustrate how the notion of compatibility arises naturally in stochastic optimal control.
Consider a standard Borel probability space $(\Omega,\F,\PP)$ supporting a c\`adl\`ag process $Z$, with values in $\R^d$, which is a semimartingale in its own filtration.
Let $T > 0$ denote the time horizon, and let $A$ be a Polish space, called the \emph{action space}. We are given a function
\[
g : [0,T] \times \Omega \times \R^d \times A \rightarrow \R^{d \times m}.
\]
Assume the following:
\begin{itemize} 
\item $g=g(t,\omega,x,a)$ is jointly  continuous in $(x,a)$ and $\FF^Z$-predictable in $(t,\omega)$.
\item $g$ is Lipschitz in $x$, uniformly in $(t,\omega,a)$.
\item There exists $c > 0$ such that $|g(t,\omega,x,a)| \le c(1+|x|)$, for all $(t,\omega,x,a)$.
\end{itemize} 
For a filtration $\GG$ on $\Omega$, let $\AA(\G)$ denote the set of $\GG$-predictable $A$-valued processes.

Let $Y=(Y_t)_{t \ge 0}$ be another $\FF^Z$-adapted c\`adl\`ag process, the role of which is to determine what information is available to the agent. Assume $Y_t$ is nonatomic for every $t > 0$.
Now fix a filtration $\GG$ satisfying $\FF^Z \subset \GG$ and $\G_t \subset \F$, and also $\G_t$ is conditionally independent of $\F^Z_\infty$ given $\F^Y_t$, for every $t \ge 0$. Thanks to the implication $(i)\Rightarrow(ii)$ of Theorem \ref{th:compatible}, $Z$ is a $\GG$-semimartingale.
Hence, for every $\alpha \in \AA(\GG)$, there exists a unique (see \cite[Theorem 4.5]{jacodmemin-weaksolution}) process $X^\alpha=(X^\alpha_t)_{t \in [0,T]}$ satisfying
\[
dX^\alpha_t = g(t,X^\alpha_{t-},\alpha_t)dZ_t, \quad X^\alpha_0 = x_0,
\]
where $x_0 \in \R^d$ is a fixed initial state.

Consider the optimization problem,
\begin{align*}
\sup_{\alpha \in \AA(\FF^Y)}F\left(\L(X^\alpha,\alpha,Z)\right),
\end{align*}
where $\L(X^\alpha,\alpha,Z) = \PP \circ (X^\alpha,\alpha,Z)^{-1}$ and $F : \P(D(\R_+;\R^d) \times M(\R_+;A) \times D(\R_+;\R^m)) \rightarrow \R$, and we recall that $M(\R_+;A)$ denotes and space of (equivalence classes of Lebesgue-a.e.\ equal) measurable functions from $\R_+$ to $A$.
The following theorem shows how to relax the $\FF^Y$-adaptedness requirement:

\begin{theorem} \label{th:control}
Suppose the restriction of $F$ to the set $\{\L(X^\alpha,\alpha,Z) : \alpha \in \AA(\GG)\}$ is continuous.
Then
\begin{align}
\sup_{\alpha \in \AA(\FF^Y)}F(\L(X^\alpha,\alpha,Z)) = \sup_{\alpha \in \AA(\GG)}F(\L(X^\alpha,\alpha,Z)).
\end{align}
\end{theorem}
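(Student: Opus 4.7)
The inequality $\sup_{\alpha \in \AA(\FF^Y)} F \le \sup_{\alpha \in \AA(\GG)} F$ is immediate, since $\FF^Y \subset \FF^Z \subset \GG$ forces $\AA(\FF^Y) \subset \AA(\GG)$. For the reverse, the plan is to show that every $\alpha \in \AA(\GG)$ admits a sequence $\alpha^n \in \AA(\FF^Y)$ with
\[
(X^{\alpha^n}, \alpha^n, Z) \ \Rightarrow \ (X^\alpha, \alpha, Z) \text{ in } D([0,T];\R^d) \times M([0,T];A) \times D([0,T];\R^m).
\]
The assumed continuity of $F$ on $\{\L(X^\alpha,\alpha,Z) : \alpha \in \AA(\GG)\}$ then closes the argument, since each $\alpha^n$ lies in $\AA(\GG)$.

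To construct the $\alpha^n$, fix $\alpha \in \AA(\GG)$. Since $\alpha$ is $\GG$-adapted, $\F^\alpha_t \subset \G_t$, and the compatibility hypothesis on $\GG$ then gives that $\F^\alpha_t$ is conditionally independent of $\F^Z_\infty$ given $\F^Y_t$, for every $t \ge 0$. This is precisely statement (i) of Theorem \ref{th:density-measurable}, applied with the role of the random variable $Z$ played by the $D(\R_+;\R^m)$-valued path of the semimartingale $Z$, with filtration $\FF=\FF^Y$, and with Assumption (B) fulfilled by $U_t := Y|_{[0,t]}$ valued in $\U_t = D([0,t];\R^d)$; nonatomicity of $\F^Y_t$ for $t > 0$ is given. (Implicit here is that the action space $A$ is homeomorphic to a convex subset of a locally convex space; if this is not the case one should first relax to $\P(A)$-valued controls.) Statement (iv) of that theorem then yields piecewise constant approximations $\alpha^n_t = f^n_k(U_{t^n_{k-1}})$ on a deterministic grid, with $f^n_k$ continuous; after passing to the left-continuous version, each $\alpha^n$ lies in $\AA(\FF^Y)$ and $(\alpha^n, Z) \Rightarrow (\alpha, Z)$ in $M([0,T];A) \times D([0,T];\R^m)$.

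The remaining and most delicate step is to lift this weak convergence to joint convergence of the SDE triples. By the Skorokhod representation theorem, produce on an auxiliary probability space a copy $(\tilde\alpha^n, \tilde Z^n)$ of $(\alpha^n, Z)$ and a copy $(\tilde\alpha, \tilde Z)$ of $(\alpha, Z)$ with $(\tilde\alpha^n, \tilde Z^n) \to (\tilde\alpha, \tilde Z)$ almost surely. Because each $\tilde Z^n$ has the same law as $Z$, the sequence $(\tilde Z^n)$ is uniformly tight as a sequence of semimartingales in the sense of Jacod--M\'emin \cite{jacodmemin-weaksolution}. Under the continuity (g1), Lipschitz (g2), and linear-growth (g3) hypotheses, their stability theorem then yields that the SDE solutions driven by $\tilde Z^n$ with controls $\tilde\alpha^n$ converge almost surely to $\tilde X$ in $D([0,T];\R^d)$; transporting this convergence back in law gives the desired joint weak convergence, and continuity of $F$ completes the proof. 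The main obstacle is precisely this SDE stability in the presence of a control that converges only in measure: one verifies the hypotheses of the stability theorem by exploiting the joint continuity of $g$ in $(x,a)$ together with the fact that each $\alpha^n$ is simple, so that on each subinterval of the deterministic grid $X^{\alpha^n}$ reduces to a Picard iteration in the (Lipschitz, linear-growth) coefficient $g(\cdot,\cdot,f^n_k(U_{t^n_{k-1}}))$ driven by $Z$, for which pathwise stability under a.s.\ convergence of driver and parameters is classical.
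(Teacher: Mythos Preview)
Your proof is correct and follows essentially the same route as the paper: the easy inequality from $\AA(\FF^Y)\subset\AA(\GG)$, the approximation via Theorem~\ref{th:density-measurable} to obtain $(\alpha^n,Z)\Rightarrow(\alpha,Z)$, and then Jacod--M\'emin stability to lift this to joint convergence of $(X^{\alpha^n},\alpha^n,Z)$. The paper's own proof is terser on the last step, simply citing the relevant theorems in \cite{jacodmemin-weaksolution}, whereas you spell out a Skorokhod representation argument; and you rightly flag the implicit convexity requirement on $A$ coming from Assumption~(A)(3), which the paper does not mention.
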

\begin{proof}
The inequality $(\le)$ is clear, as $\AA(\FF^Y) \subset \AA(\GG)$. To prove the reverse,  it suffices to approximate an arbitrary $\alpha \in \AA(\GG)$ in a suitable sense. To apply Theorem \ref{th:density-measurable}, note that $^*\F^\alpha_t$ is conditionally independent of $Z$ given $\F^Y_t$, for every $t \ge 0$. Hence, there exists a sequence $\alpha^n \in \AA(\FF^Y)$ such that $(\alpha^n,Z) \Rightarrow (\alpha,Z)$ in $M(\R_+;A) \times D(\R_+;\R^d)$. Finally, it should not be surprising that we can conclude that $(X^{\alpha^n},\alpha^n,Z) \Rightarrow (X^\alpha,\alpha,Z)$, e.g., by using the results of Jacod and M\'emin \cite{jacodmemin-weaksolution}, namely Theorems 2.25, 3.24, and 4.5 therein.
\end{proof}

As a special case, if $Y=Z$, then for $\GG$ we may choose any filtration compatible to $\FF^Z$ in the sense of any of the equivalent conditions of Theorem \ref{th:compatible}.
Note also that we could easily generalize Theorem \ref{th:control} by replacing the Lipschitz assumption with some kind of weak uniqueness, and by weakening the continuity assumption on $F$ to some order of Wasserstein-continuity as long as there are suitable moment estimates available for the processes $X^\alpha$.

Traditionally, $F$ is of the form $F(P) = \int G\,dP$, where $G$ is the sum of an integrated running reward and a terminal reward. When $F$ is linear in this sense, an alternative proof of Theorem \ref{th:control} would proceed by approximating a general control $\alpha \in \A(\GG)$ by a piecewise constant one and then using the extreme point representation of Theorem \ref{th:extremepoints-discrete} (cf. \cite[Section 4]{karoui-tanII}). For nonlinear functions $F$, however, the extreme point argument fails, and one must resort to Theorem \ref{th:density-measurable}. Optimal control problems with nonlinear $F$ have become increasingly relevant in recent years in the context of controlled McKean-Vlasov systems (see \cite{carmona2015forward,lacker2017limit,pham2015bellman} and the references therein).

A key use of a result like Theorem \ref{th:control} is in finding compactness to facilitate existence proofs. Theorem \ref{th:compatible} shows that we can conduct the optimization over all \emph{compatible} joint laws of $(\alpha,Z)$, and we have seen that the compatibility condition defines a closed set of probability measures. A final important step is to choose a better path space for the controls, as opposed to $M(\R_+;A)$, in which compact sets are scarce. This is typically done by working with \emph{relaxed controls}, also known as \emph{Young measures}, but we do not go through this here (see \cite{elkaroui-compactification,haussmannlepeltier-existence,kurtzstockbridge-1998}).

\bibliographystyle{amsplain}
\bibliography{bibl}

\end{document}